\title{ On non-primitive Weierstrass points }
\author[N. Pflueger]{Nathan Pflueger}\address{Department of Mathematics, Brown University, Box
1917, Providence, RI 02912}\email{pflueger@math.brown.edu}
\date{\displaydate{date}}
\newtheorem{thm}{Theorem}[section]
\newtheorem{lemma}[thm]{Lemma}
\newtheorem{prop}[thm]{Proposition}
\newtheorem{cor}[thm]{Corollary}
\newtheorem{conj}[thm]{Conjecture}
\theoremstyle{definition}
\newtheorem{defn}[thm]{Definition}
\newtheorem{innereg}[thm]{Example}
\newenvironment{eg}
  {\begin{innereg}}
  {\end{innereg}}
\newtheorem{rem}[thm]{Remark}
\newtheorem{sit}[thm]{Situation}
\newtheorem{qu}[thm]{Question}
\newcommand{\NN}{\textbf{N}}
\newcommand{\PP}{\textbf{P}}
\newcommand{\ZZ}{\textbf{Z}}
\newcommand{\iou}[1][]{
    \ifthenelse{\equal{#1}{}}{{\color{blue}\{IOU\}}}
    {{\color{blue}\{IOU: #1\}}}
}
\newcommand{\cC}{\mathcal{C}}
\newcommand{\cG}{\mathcal{G}}
\newcommand{\cM}{\mathcal{M}}
\newcommand{\cO}{\mathcal{O}}
\newcommand{\cX}{\mathcal{X}}
\newcommand{\msg}{\cM^S_{g,1}}
\DeclareMathOperator{\Pic}{Pic}
\DeclareMathOperator{\wt}{wt}
\DeclareMathOperator{\ew}{ewt}
\DeclareMathOperator{\slide}{slide}
\DeclareMathOperator{\rf}{ref}
\DeclareMathOperator{\val}{val}
\DeclareMathOperator{\chr}{char}
\DeclareMathOperator{\codim}{codim}
\DeclareMathOperator{\Spec}{Spec}
\DeclareMathOperator{\NGS}{NG}
\begin{document}
\maketitle

\begin{abstract}
We give an upper bound for the codimension in $\cM_{g,1}$ of the variety $\msg$ of marked curves $(C,p)$ with a given Weierstrass semigroup. The bound is a combinatorial quantity which we call the effective weight of the semigroup; it is a refinement of the weight of the semigroup, and differs from the weight precisely when the semigroup is not primitive. We prove that whenever the effective weight is less than $g$, the variety $\msg$ is nonempty and has a component of the predicted codimension. These results extend previous results of Eisenbud, Harris, and Komeda to the case of non-primitive semigroups. We also survey other cases where the codimension of $\msg$ is known, as evidence that the effective weight estimate is correct in wider circumstances.
\end{abstract}


\section{Introduction}

Given a point $p$ on a smooth curve $C$ of genus $g$, there is an associated numerical semigroup
$$
S(C,p) = \left\{ - \val_p(f):\ f \in \Gamma(C \backslash \{p\}, \cO_C ) \right\},
$$
given by the pole orders of rational functions with no poles away from $p$. Weierstrass's \emph{L\"uckensatz} (now an easy consequence of the Riemann-Roch formula) states that there are exactly $g$ gaps in $S(C,p)$\footnote{The author has heard conflicting stories about whether the number of gaps in a numerical semigroup is called the ``genus'' due to this fact from geometry, or as a joking reference to the ``number of holes'' in the semigroup.}.

In reverse, any numerical semigroup $S$ with $g$ gaps defines a (not necessarily closed) subvariety $\msg \subseteq \cM_{g,1}$ of the moduli space of curves with a marked point. These loci stratify $\cM_{g,1}$, with the locus defined by the \emph{ordinary} semigroup $H_g = \{0,g+1,g+2,\cdots\}$ dense and open, and the value of the $i$th gap ($i=1,2,\cdots,g$) an upper semicontinuous function.

The link between the combinatorics of these numerical semigroups and the geometry of curves and their moduli is a wide and fascinating story that remains largely mysterious, though many intriguing special cases (specific types of semigroups) are well-understood. The core of the difficulty (and excitement) in this story lies in the fact that $S(C,p)$ is not an arbitrary sequence of integers, but a semigroup; this combinatorial restriction reflects itself in the geometry of the stratification.

Our objective is to propose a partial answer to a basic question: given a semigroup $S$, what is the codimension of $\msg$ in  $\cM_{g,1}$? 

\begin{defn}
The \emph{effective weight} of a numerical semigroup $S$ is 
$$
\ew(S) = \sum_{\textrm{gaps } b} \left( \# \mbox{ generators $a < b$} \right).
$$
Alternatively, $\ew(S)$ is the number of pairs $(a,b)$, where $0 < a < b$, $a$ is a generator, and $b$ is a gap.
\end{defn}

In almost every situation where $\codim \msg$ is known for an explicit family of semigroups (as well as for all semigroups of genus up to $6$), it is equal to $\ew(S)$; we summarize a number of these cases in Section \ref{s_background}. The first genus in which the author is aware of a semigroup with $\codim \msg < \ew(S)$ is $g=9$ (the example is discussed in Section \ref{ss_notew}). 

Our main results are the following, which give much stronger evidence for the utility of $\ew(S)$ in the study of this stratification of $\cM_{g,1}$.

\begin{thm} \label{t_ewbound}
If $\msg$ is nonempty, and $X$ is any irreducible component of it, then $$\dim X \geq \dim \cM_{g,1} - \ew(S).$$
\end{thm}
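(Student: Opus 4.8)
The plan is to produce, for each component $X$ of $\msg$, enough algebraically independent local conditions on $\cM_{g,1}$ cutting out (a neighborhood of a general point of) $X$, so that the number of such conditions is at most $\ew(S)$. The natural framework is deformation theory: fix a general $(C,p) \in X$ and analyze how the semigroup $S(C,p)$ can vary in a versal deformation. The key is to identify, for each gap $b$ of $S$, a geometric condition — the requirement that $b$ \emph{remain} a gap — and to argue that the ``cost'' of this condition, measured as a codimension, is bounded by the number of generators $a < b$. Summing over all gaps $b$ gives $\ew(S)$.

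\medskip

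The technical heart is a filtration argument. For each $n \geq 1$, let $V_n = \Gamma(C, \cO_C(np))$, so $\dim V_n = n + 1 - g + \#\{\text{non-gaps} \le n\}$, and the jumps in $\dim V_n$ as $n$ increases are exactly the non-gaps of $S$. A tangent vector to $\cM_{g,1}$ at $(C,p)$ is a class in $H^1(C, T_C(-p))$ (using the marked point), and one wants to understand the sub-locus of deformations preserving the property ``$b \notin S$'' for a given gap $b$. The idea I would pursue: multiplication of sections gives maps $V_a \otimes V_{b-a} \to V_b$; a non-gap $b$ arises either as a multiple of smaller non-gaps (via the semigroup operation) or ``for free'' at a generator. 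When $b$ is a gap, the obstruction to making it a non-gap under deformation is governed by a cup-product / coboundary pairing, and the relevant obstruction space has dimension controlled by the number of \emph{generators} below $b$ — because the semigroup structure means most of the would-be new section at level $b$ is already forced by products of lower-level sections, and only the generators $a < b$ contribute genuinely new, independent deformation directions. This is precisely the refinement that distinguishes $\ew(S)$ from the classical weight: a non-generator gap is ``expensive to fill'' only through its generator predecessors, not through all of them.

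\medskip

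Concretely, I would set up a sequence of moduli problems interpolating between $\cM_{g,1}$ and $\msg$ — e.g.\ by successively imposing, for each gap $b$ in increasing order, the vanishing that keeps $b$ a gap — and bound the codimension of each successive stratum by the dimension of an explicit space of first-order deformations violating that vanishing. The bound for the $b$-th step should come out to be at most $\#\{\text{generators } a < b\}$, using Riemann–Roch to compute the relevant $h^0$'s and the base-point-free pencil trick (or a direct cohomological computation) to handle the multiplication maps. Summing the per-step bounds yields $\codim X \le \ew(S)$, equivalently $\dim X \ge \dim \cM_{g,1} - \ew(S)$.

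\medskip

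The main obstacle I anticipate is the bookkeeping in the filtration step: making rigorous the claim that imposing ``$b$ remains a gap'' costs at most $\#\{\text{generators } a<b\}$ rather than the naive $\#\{\text{non-gaps} < b\}$ or even $b - 1$. This requires carefully tracking which first-order deformations of the linear series $V_\bullet$ are already ``used up'' by the semigroup relations coming from smaller generators, so that the new conditions at level $b$ genuinely only see the generators. A secondary subtlety is ensuring the bound holds for \emph{every} irreducible component $X$, not just the expected one — this means the argument must be purely a lower bound on dimension via an upper bound on the number of local equations, valid at a general point of any $X$, with no appeal to smoothness or to $\msg$ being equidimensional. I would handle this by phrasing everything in terms of the dimension of a Zariski tangent space or a bound on the number of generators of the ideal locally, which automatically applies componentwise.
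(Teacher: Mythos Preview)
Your approach differs from the paper's, and the step you yourself flag as the ``main obstacle'' is not bookkeeping but the entire content of the theorem. As written, your filtration would most naturally reproduce only the classical weight bound: Schubert-type conditions on the complete flag $\{H^0(\cO_C(np))\}_n$ have total codimension $\wt(S)$, and you give no concrete mechanism for replacing ``non-gaps below $b$'' by ``generators below $b$'' in the per-gap count. The multiplication maps $V_a \otimes V_{b-a} \to V_b$ do encode the semigroup structure, but it is not at all clear how a cup-product or coboundary argument on $H^1(T_C(-p))$ would show that composite non-gaps impose no independent constraint on first-order deformations; the tangent space to the relevant flag variety does not obviously decompose along the generator/composite dichotomy, and your proposal does not indicate how to make it do so. What you have is a heuristic for why $\ew(S)$ ought to be the answer, not an argument that bounds codimension by it.

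The paper avoids this difficulty by a different device. Rather than deforming the full flag, one fixes an \emph{effective subsequence} $T \subset S$ (containing $0$, all generators of $S$, and no composite elements below the largest gap) and studies the variety $\widetilde{G}^r_d(C;(p,d-T))$ of typically \emph{incomplete} $g^r_d$'s with vanishing sequence exactly $d-T$ at $p$. Two short computations then finish the argument: (i) this variety is empty unless $S(C,p)=S$, and when $S(C,p)=S$ it is an open Schubert cell of dimension exactly $\rho_g(r,d;d-T)+\ew(S)$ (only $0$ and the generators contribute to the cell-dimension sum, since any composites in $T$ lie above all gaps and hence contribute zero); (ii) the standard Brill-Noether lower bound gives, for any family $(\cC/B,s)$, that $\widetilde{G}^r_d(\cC/B;(s,d-T))$ has local dimension at least $\dim B+\rho$. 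Subtracting the fiber dimension from (i) yields $\dim B^S \geq \dim B - \ew(S)$, and applying this to a versal deformation of an arbitrary $(C,p)\in\msg$ gives the theorem for every component. The passage from $\wt$ to $\ew$ is thus achieved not by a deformation-theoretic refinement of the tangent-space analysis but by discarding the composite pole orders from the linear series at the outset, so that the semigroup relations are absorbed into the choice of $T$ rather than having to be extracted afterward.
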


We call a point or irreducible component of $\msg$ \emph{effectively proper} if the local dimension of $\msg$ is exactly $\dim \cM_{g,1} - \ew(S)$.

\begin{thm} \label{t_ewexist}
If $S$ is a genus $g$ numerical semigroup with $\ew(S) \leq g-2$, then $\msg$ has an effectively proper component. If $\chr k = 0$, then the same is true for all numerical semigroups with $\ew(S) \leq g-1$.
\end{thm}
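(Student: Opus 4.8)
The plan is to realize a marked curve with Weierstrass semigroup $S$ as a smoothing of a chain of elliptic curves, in the manner of Eisenbud--Harris--Komeda, and to read off the dimension of the resulting component of $\msg$ from a count of torsion conditions along the chain. By Theorem~\ref{t_ewbound} every component of $\msg$ already has dimension at least $\dim\cM_{g,1}-\ew(S)$, so it suffices to produce one component $X$ meeting the chain locus and to bound $\dim X$ from above by $\dim\cM_{g,1}-\ew(S)$; the two inequalities then force effective properness. I would organize the construction inductively, one elliptic tail at a time: combinatorially, write $S$ as obtained from the ordinary semigroup $H_g$ by a sequence of $g$ elementary moves, the $i$-th move attaching a genus-$1$ tail $E_i$ glued at a point of order exactly $d_i$ in its group law (with $d_i=1$ meaning ``glue generically, impose nothing''). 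The combinatorial heart of this step is an accounting identity: the moves can be arranged so that the total codimension of the imposed torsion conditions equals $\ew(S)$, i.e.\ each generator--gap pair $(a,b)$ counted by the effective weight is charged to exactly one torsion constraint along the chain. This is where the operators $\slide$ and $\rf$ developed above do their work.

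Next I would carry out the geometric construction. Let $C_0=E_1\cup\cdots\cup E_g$ with $p$ a general point of $E_g$ and the node $E_i\cap E_{i+1}$ placed at a point of order exactly $d_i$ on $E_i$. Taking a one-parameter smoothing of $C_0$ and tracking the canonical bundle as a limit linear series, one computes the vanishing sequence at $p$ of the limit $\mathfrak g^{g-1}_{2g-2}$ on $E_g$; the claim to verify is that, for generic choices of the $j$-invariants and of the attaching points subject to the torsion constraints, this vanishing sequence is precisely the complement of $S$, so that the general fiber $(C_t,p)$ lies in $\msg$. (One could instead replace $C_0$ by a smoothing of the monomial curve $\Spec k[S]$ and run the analogous count on its versal deformation; the numerics agree.)

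For the dimension count: the locus in $\overline{\cM}_{g,1}$ of chains of elliptic curves carrying the prescribed torsion orders has dimension $\dim\cM_{g,1}-(g-1)-\ew(S)$, the $(g-1)$ from the nodes and the $\ew(S)$ from the accounting identity of the first step. A component $X$ of $\msg$ whose closure meets this locus therefore satisfies $\dim X\le\dim\cM_{g,1}-\ew(S)$, since near such a point the chain locus is cut out by the $g-1$ node-smoothing parameters; combined with Theorem~\ref{t_ewbound}, $X$ is effectively proper. The role of the characteristic hypothesis is that an order-$d_i$ condition can be imposed on a moving genus-$1$ curve precisely when the $d_i$-torsion is \'etale over the base: automatic in characteristic $0$, but failing in characteristic $p$ when $p\mid d_i$, in which case the offending move must be replaced by a two-component gadget. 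This substitution is affordable only when a spare elliptic tail is available, i.e.\ when $\ew(S)\le g-2$, whereas in characteristic $0$ a single free tail (needed to anchor the induction at $H_g$) suffices and the bound $\ew(S)\le g-1$ is enough.

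The main obstacle is the verification in the second paragraph: that the limiting Weierstrass semigroup is exactly $S$, not a proper degeneration of it. Upper semicontinuity of the gaps gives one inclusion for free, so the content is to show that within the torsion-constrained family there is a member whose smoothing acquires no extra non-gaps --- a genericity statement that must be checked move by move and is the precise source of the characteristic restriction. Making the accounting identity relating the torsion conditions to $\ew(S)$ sufficiently canonical that it meshes with this move-by-move genericity is the other place where care is required.
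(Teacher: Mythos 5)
Your overall architecture (induction by elliptic tails attached at torsion points, plus Theorem~\ref{t_ewbound} for the lower bound) is in the right family, but the step that is supposed to deliver the upper bound $\dim X \le \dim\cM_{g,1}-\ew(S)$ does not work as stated. The fact that $\overline{X}$ \emph{meets} the torsion-constrained chain locus bounds nothing from above: a large component can perfectly well pass through a small boundary locus. To run your cutting argument you would need the much stronger containment that the \emph{entire} intersection of $\overline{X}$ with the deepest boundary stratum (near your point, and for all the topological types that can occur there, not only chains of smooth elliptic curves) lies inside the locus where the prescribed exact-torsion conditions hold; only then does cutting by the $g-1$ node parameters give $\dim X-(g-1)\le \dim\cM_{g,1}-(g-1)-\ew(S)$. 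That containment is the real content of the theorem, it is a limit-linear-series statement, and you give no mechanism for it. Worse, the tool you propose for it --- tracking the \emph{complete} canonical series along the chain --- cannot produce the number $\ew(S)$: for a non-primitive $S$ the canonical series is never dimensionally proper (Eisenbud--Harris already observe $\codim\msg<\wt(S)$ in that case), and the vanishing/torsion accounting for a complete series with vanishing data all of $S\cap[0,2g-1]$ yields $\wt(S)$, not $\ew(S)$. The missing idea, which is the paper's central innovation, is to replace the complete series by an \emph{incomplete} linear series whose vanishing data at $p$ records only an ``effective subsequence'' of $S$ (the generators, with small composite elements deliberately omitted); this is exactly what makes the fiber dimension come out as $\rho+\ew(S)$ (Lemma~\ref{l_dimGT}) and lets the regeneration lemma produce a component of the correct dimension directly, rather than via an intersection-with-the-boundary argument in $\overline{\cM}_{g,1}$.

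Two further points. First, your ``accounting identity'' (arranging the moves so that the torsion conditions total $\ew(S)$) is a genuine combinatorial theorem, not bookkeeping: it requires $S$ to be secundive (automatic when $\ew(S)\le g-1$), a careful choice of the torsion order at each step, and it \emph{fails} for exactly one family of semigroups of effective weight $g-1$, namely $S=\{0,m,m+1\}\cup H_{2m}$. Second, and relatedly, you misdiagnose the role of the characteristic hypothesis: it has nothing to do with $p\mid d_i$ torsion failing to be \'etale (the modern limit-linear-series machinery is characteristic-free, and one may attach an ordinary elliptic curve carrying a point of exact order $m$), and the distinction between $\ew(S)\le g-2$ and $\ew(S)\le g-1$ is not about a ``spare tail.'' Characteristic $0$ enters only because for the exceptional family above no valid inductive step exists and one must invoke Komeda's characteristic-zero theorem for that single case. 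As written, your proposal defers the semigroup-verification step, asserts the containment and the accounting identity without proof, and would, if pushed through with the canonical series, reprove only the primitive (weight) case.
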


The effective weight is a refinement of a more naive quantity, the \emph{weight} of a semigroup, and the two quantities are equal for $S$ if and only if $S$ is \emph{primitive}, meaning that the sum of any two nonzero elements is greater than the largest gap (see Section \ref{ss_ehk}).

Theorems \ref{t_ewbound} and \ref{t_ewexist} were originally proved, with a characteristic $0$ hypothesis, for primitive semigroups (using the weight) by Eisenbud, Harris, and Komeda \cite{eh87,kom91}. Our proofs are based on theirs, using the theory of limit linear series as the central technical tool. Our primary innovation is to apply the machinery of limit linear series to produce \emph{incomplete} linear series with specified vanishing data on smooth curves. The basic technique is the same: curves with Weierstrass semigroups of genus $g$ are constructed by choosing a suitable genus $g-1$ semigroup and a marked curve realizing it, attaching an elliptic curve at the Weierstrass point, marking a second point on the elliptic curve differing by torsion, and deforming the resulting nodal curve.

\begin{rem} \label{r_st}
The choice of terminology ``effective weight'' was made in reference to terminology from the numerical semigroup literature. The set of all numerical semigroups can be arranged in a rooted tree, with each level corresponding to a different genus, where the parent of a semigroup $S$ is given by adding the largest gap back into $S$. The children of a given semigroup $S$ correspond to the ``effective generators'' of $S$, which are defined to be the generators that are larger than the largest gap. For details, and a study of the structure of this tree, see \cite{bras}. The effective weight of $S$ is determined by examining, in the path from the root (genus $0$ semigroup) to $S$, the index of the effective generator removed at each step (when the effective generators are listed in increasing order). If a similar procedure were followed, arranging all cofinite subsets of $\NN$ into a tree (not just semigroups), then the quantity constructed in the same way would be the weight, rather than the effective weight.
\end{rem}

\subsection{Speculation and conjectures}

While there are examples of semigroups $S$ for which $\codim \msg < \ew(S)$, to the author's knowledge all such examples fall in the range $g \leq \codim \msg \leq 2g$. Therefore, we (somewhat speculatively) conjecture that no such semigroups exist in codimension less than $g$.

\begin{conj}
If $\msg$ has a component of codimension less than $g$ in $\cM_{g,1}$, then all components of $\msg$ have codimension exactly $\ew(S)$.
\end{conj}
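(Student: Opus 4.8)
It is equivalent to prove that every irreducible component $X$ of $\msg$ has codimension at least $\min(g,\ew(S))$ in $\cM_{g,1}$: if $\ew(S)\le g$, this together with Theorem~\ref{t_ewbound} forces $\codim X=\ew(S)$ for every $X$, while if $\ew(S)>g$, it says in particular that no component of codimension less than $g$ exists, so the hypothesis of the conjecture is never met. One may therefore assume $\codim X<g$ and aim to show $\codim X=\ew(S)$. The natural plan is an induction on $g$ refining the degeneration arguments behind Theorems~\ref{t_ewbound} and~\ref{t_ewexist}. A component with $\codim X<g$ has $\dim X>2g-2$, which by a Diaz-type bound exceeds the dimension of any complete subvariety of $\cM_{g,1}$, so $\overline X$, being complete, cannot be contained in $\cM_{g,1}$ and hence meets the boundary of $\overline{\cM}_{g,1}$; one then analyzes the limit linear series along that boundary locus.

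The case to handle first is that $\overline X$ meets the elliptic-tail divisor $\Delta_1$, so a general point of $X$ degenerates to $Y\cup_q E$ with $Y$ of genus $g-1$, $E$ elliptic, and the limit of the marked point lying on $E$ with difference from $q$ of torsion order dictated by $S$. I would take the limit of the incomplete linear series on $C$ realizing the generators of $S$ --- the object this paper constructs on smooth curves --- as an Eisenbud--Harris limit series on $Y\cup_q E$; its $Y$-aspect should force $(Y,q)\in\cM^{S'}_{g-1,1}$ for a genus $g-1$ semigroup $S'$ compatible with $S$ and with the torsion order, and its $E$-aspect should encode exactly the torsion condition. The resulting estimate $\dim X\le\dim\cM^{S'}_{g-1,1}+1+1$ --- the $j$-line of the tail, and one smoothing parameter for the node --- combined with the inductive bound $\codim\cM^{S'}_{g-1,1}\ge\min(g-1,\ew(S'))$ and the recursion governing how the effective weight changes in one step of the semigroup tree (Remark~\ref{r_st}, \cite{bras}), should yield $\codim X\ge\min(g,\ew(S))$; the reverse inequality from Theorem~\ref{t_ewbound} then gives equality. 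The base cases $g\le 1$, and the semigroups with $\ew(S)=0$ for which $\msg$ is dense, are immediate, and in positive characteristic one should expect a loss of one unit, as in Theorem~\ref{t_ewexist}.

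The heart of the difficulty --- and the reason the statement is only a conjecture --- is that one must control \emph{every} component and \emph{all} of its boundary limits, not just produce one favorable degeneration. Three issues stand out. First, $\overline X$ may meet several types of boundary strata ($\Delta_0$, the divisors $\Delta_i$ with the marked point on either side, or strata carrying nonreduced limit data), and for each the corresponding dimension count must be carried out and shown never to exceed $\dim\cM_{g,1}-\min(g,\ew(S))$. Second, one must exclude ``resonant'' limits, in which the vanishing sequences of the two aspects of the limit series overlap more than the generic prediction: such resonance is precisely what makes the genus $9$ example of Section~\ref{ss_notew} satisfy $\codim\msg<\ew(S)$, and the hypothesis $\ew(S)<g$ --- equivalently, the smoothing-theoretic input underlying Theorem~\ref{t_ewexist}, which guarantees that the relevant nodal curves deform in families of the predicted dimension when the effective weight is small --- is what must be invoked to rule it out in low codimension. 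Third is the combinatorial bookkeeping of the effective-weight recursion in those steps where the effective weight drops by more than one in passing to $Y$, which will likely require permitting tails of genus greater than one (or iterating the elliptic degeneration) while verifying that the dimension of each boundary stratum used matches the effective-weight contribution of that step. I expect the second issue --- a ``no resonance in codimension less than $g$'' lemma --- to be the genuine obstacle; granting it, the induction should close along the lines above.
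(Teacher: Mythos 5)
This statement is one of the paper's open conjectures: the paper offers no proof, explicitly describing it as speculative, so there is no argument of the paper to match yours against. Your submission, as you acknowledge, is a research strategy rather than a proof, and the two steps you leave open --- a ``no resonance in codimension less than $g$'' lemma and the stratum-by-stratum control of every boundary degeneration of every component --- are not technical footnotes; they are the entire open content of the conjecture. Resonance demonstrably occurs (the genus $9$ semigroup $\langle 6,7,8\rangle$ has $\codim \msg = 11 < \ew(S) = 12$), and nothing in your sketch identifies a mechanism that could only operate in codimension at least $g$; you assert that the hypothesis ``should'' exclude it, but give no argument, and indeed the paper's own conjectures are calibrated by the empirical observation that all known failures lie in the range $g \le \codim \msg \le 2g$, not by any structural reason.

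Even granting the general plan, several steps would fail as written. First, the Diaz-type argument only forces $\overline{X}$ to meet the boundary \emph{somewhere}; the limit-linear-series machinery this paper uses (smoothing families, the regeneration lemma of Section \ref{s_dp}) is a compact-type theory, so degenerations into $\Delta_0$, or into deeper strata where the marked point collides with a node, are simply not covered, and extending the dimension estimates to such limits is a substantial project in itself. Second, at $\Delta_1$ semicontinuity gives only \emph{lower} bounds on the vanishing sequence of the $Y$-aspect, so the limit lands in a union of loci of the form $G^r_d(Y;\ (q,A))$ over several sequences $A$, not in a single $\cM^{S'}_{g-1,1}$ with a predictable $S'$; the estimate $\dim X \le \dim \cM^{S'}_{g-1,1}+2$ therefore requires an argument precisely at the non-generic (resonant) limits, which is the point at issue. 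Third, the effective-weight recursion you want to run downward is exactly what Section \ref{s_secundive} controls via secundive semigroups and good sliders, and that control genuinely needs $\ew(S) \le g-1$: non-secundive semigroups have $\ew(S) \ge g$ by Lemma \ref{l_secundive}, and good sliders can fail already at $\ew(S)=g-1$. Since your reformulation $\codim X \ge \min(g,\ew(S))$ must be proved for \emph{all} $S$ --- including those with $\ew(S)$ as large as roughly $(g+1)^2/8$ --- the induction has no combinatorial engine in the regime where it is most needed. In short, the proposal is a reasonable program aligned with the paper's methods, but it is not a proof, and the statement remains open.
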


Curiously, we are not aware of any numerical semigroups of any genus for which $\codim \msg > 2g$. Therefore we also make the following (equally speculative) conjecture.

\begin{conj}
For any numerical semigroup such that $\msg \neq \emptyset$, all components of $\msg$ have codimension at most $2g$.
\end{conj}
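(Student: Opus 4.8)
\emph{Proof strategy.} The plan is to establish the equivalent local statement: for every $(C,p)\in\msg$, the local dimension of $\msg$ at $(C,p)$ is at least $g-2$. Two reductions frame the problem. First, Theorem~\ref{t_ewbound} already gives the conclusion whenever $\ew(S)\le 2g$, so we may assume $\ew(S)>2g$. Writing $\ew(S)$ as the sum, over minimal generators $a$ below the Frobenius number of $S$, of the number of gaps exceeding $a$ (so that the effective generators contribute nothing), one checks that the semigroups in this regime are highly non-generic: up to their effective generators they are of the ``many consecutive small generators'' type modeled by $\la m,m+1,\dots,m+j\ra$, for which $m\approx\sqrt{\ew(S)}$. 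One therefore expects $\msg$ to sit in a non-generic gonality stratum of $\cM_{g,1}$, where finer techniques than the ramification count behind Theorem~\ref{t_ewbound} should be available. Second, $S(C,p)=S$ is equivalent to the complete canonical series of $C$ having vanishing sequence $(l_1-1,\dots,l_g-1)$ at $p$, where $l_1<\cdots<l_g$ are the gaps; together with the degree-$m$ pencil totally ramified at $p$ ($m$ the multiplicity of $S$), this presents $\msg$, Hurwitz-locally, as a Brill--Noether locus of linear series with prescribed ramification at a point on a curve of controlled gonality.

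I would then pursue two complementary routes. The first mirrors the proofs of Theorems~\ref{t_ewbound} and~\ref{t_ewexist}: induct on $g$ by degenerating a general point of a component $X\subseteq\msg$, inside $\overline{\cM}_{g,1}$, to a nodal curve $C_0\cup_q E$ with $E$ elliptic, $p\in E$, and $(C_0,q)$ of genus $g-1$ with Weierstrass semigroup $S'$ the ``parent'' of $S$ obtained by returning the largest gap (cf.\ Remark~\ref{r_st}). Assuming $S'$ is realized, by induction the relevant component of $\cM^{S'}_{g-1,1}$ has dimension at least $(g-1)-2=g-3$; the elliptic tail contributes the two moduli of $(E;q,p)$, of which one is cut off by a torsion condition on $p-q$ prescribed by the pair $(S,S')$, and smoothing the node restores one parameter, so that $\dim X\ge(g-3)+(2-1)+1=g-1\ge g-2$. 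The second route attacks $\msg$ directly through the Brill--Noether theory of curves of bounded gonality: bound the dimension of the relevant ramified Brill--Noether locus from below by the appropriate $\rho_k$-type invariant, and then, using the semigroup-closure relations to pay only for the generators of $S$ (the passage from $\wt$ to $\ew$), verify by optimizing over the auxiliary index in the $\rho_k$ formula that the resulting lower bound never drops below $g-2$.

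The crux, common to both routes, is controlling the Weierstrass semigroup under \emph{smoothing} rather than under degeneration: degenerating $(C,p)$ and reading off $S'$ is routine, but showing that a general smoothing of $C_0\cup_q E$ again has semigroup exactly $S$ — and not a strictly larger, more generic one — needs the limit-linear-series dimension count to be sharp, and it is precisely the failure of $\ew(S)$ to be sharp when $\ew(S)>2g$ (the presence of limit linear series that do not smooth) that defeats the naive bookkeeping and forces one to isolate the correct, conjecturally-$\le 2g$, replacement count. Two further points must be handled. Not every numerical semigroup is a Weierstrass semigroup (Buchweitz), so the induction may only pass between semigroups that are actually realized; one needs that the parent $S'$ is again a Weierstrass semigroup in genus $g-1$ whenever $S$ is one in genus $g$, which is not obvious and would have to be proved alongside the degeneration statement. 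And because the conclusion is about \emph{all} components of $\msg$, one must show that every component — not just one of maximal dimension — has its general point degenerating to the boundary stratum $\cM^{S'}_{g-1,1}\times\cM_{1,2}$; establishing this uniform reachability, or otherwise handling the components that fail to meet that stratum, is where I expect genuinely new input to be required.
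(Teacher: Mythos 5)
This statement is one of the paper's explicitly speculative conjectures, not a theorem: the paper offers no proof, only the empirical observation that no semigroup with $\codim \msg > 2g$ is known, and it even hedges that the constant $2g$ might need to be replaced by $2g + C_2$. So there is no argument in the paper to compare yours against, and your proposal, as you yourself acknowledge, does not supply one: it is a program whose unresolved steps sit exactly at the hard points. The only complete reduction you make is correct but easy --- Theorem~\ref{t_ewbound} disposes of all $S$ with $\ew(S) \leq 2g$, so the content of the conjecture is entirely about semigroups with $\ew(S) > 2g$, where the effective-weight machinery of the paper is known (by the paper's own examples, e.g.\ $\langle 6,7,8\rangle$ and the maximal-$\ew$ family) to stop being sharp.

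The genuine gaps in the remaining program are the ones you flag, and they are fatal as stated. First, the inductive route needs the parent semigroup $S'$ (largest gap returned) to be a Weierstrass semigroup whenever $S$ is, which is unknown and is itself a hard realizability question of Hurwitz--Buchweitz type. Second, the dimension count $\dim X \geq (g-3) + (2-1) + 1$ presupposes that a general point of \emph{every} component $X$ of $\msg$ degenerates into the stratum $\cM^{S'}_{g-1,1} \times \cM_{1,2}$ with a refined limit whose aspects behave as in the paper's Lemma~\ref{l_reg}-type setup, and conversely that a general smoothing of such a nodal curve has semigroup exactly $S$ rather than a more generic one; in the regime $\ew(S) > 2g$ the failure of dimensional properness means precisely that limit linear series exist which do not smooth, so the bookkeeping you borrow from the proofs of Theorems~\ref{t_ewbound} and~\ref{t_ewexist} cannot be transplanted. (Note also that a lower bound on $\dim X$ cannot be extracted merely from exhibiting a large boundary locus; one must show that locus lies in the closure of $X$ inside the Weierstrass stratum, which is the same unsolved smoothing problem.) Third, the structural claim that all semigroups with $\ew(S) > 2g$ are ``modeled by'' $\langle m, m+1, \dots, m+j\rangle$ is asserted, not proved, and the second route through Brill--Noether theory for curves of bounded gonality is not carried out at all. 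In short: the reduction to $\ew(S) > 2g$ is fine, but everything beyond it is a research plan for an open problem, not a proof.
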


Note that in the above conjectures, $g$ and $2g$ perhaps ought to be replaced with $g+C_1$ and $2g+C_2$ for some constants $C_1,C_2$, the value of which we have no strong beliefs about. We have stated the conjectures as above merely to make them specific.

Although not relevant to the present paper, we also mention a purely combinatorial conjecture about the effective weight that arose during this work. We have verified this conjecture by a computer search\footnote{Source code in C++ is available on the author's website. The search took approximately 17 hours on a 3.4Ghz Intel i7-3770 CPU.} up to genus $50$.

\begin{conj} \label{conj_maxew}
For any numerical semigroup of genus $g$, $$\ew(S) \leq \left\lfloor \frac{(g+1)^2}{8} \right\rfloor.$$
\end{conj}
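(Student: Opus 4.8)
The plan is to recast the problem in terms of the multiplicity $m$ of $S$ together with the number of gaps in each residue class modulo $m$, and then to reduce the conjecture to a finite optimization. Since $1,2,\dots,m-1$ are gaps, $m\le g+1$ and every minimal generator lies in $\{m\}\cup\mathrm{Ap}(S,m)$, so there are at most $m$ of them, at most one in each nonzero residue class. The starting point is the elementary fact that if $a$ is a minimal generator and $b>a$ a gap, then $b-a$ is again a gap (else $b=a+(b-a)\in S$); hence, writing $N(a)=\#\{\text{gaps}>a\}$, we have $\ew(S)=\sum_{a}N(a)$ over the minimal generators, and each $N(a)$ is determined by the residue-class gap counts $k_\rho$.

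Next I would split the minimal generators into the \emph{near} ones, those in $[m,2m-1]$, and the \emph{far} ones, those $\ge 2m$. If there are $t$ near generators, then $[m,2m-1]$ contains $m-t$ gaps; since the $i$th smallest minimal generator is at least $m+i-1$ and every near generator $a$ satisfies $N(a)\le g-m+1$, the near generators contribute at most $t(g-m+1)$ to $\ew(S)$. When the Frobenius number is below $2m$ --- which, crucially, is the case for the conjecturally extremal family $\langle m,m+1,\dots,m+k-1\rangle$ --- every far generator $a$ has $N(a)=0$, one has $g=2m-1-t$ exactly, and maximizing $t(g-m+1)=(2m-1-g)(g-m+1)$ over the admissible $m$ already produces the bound $\lfloor(g+1)^2/8\rfloor$, attained for $k=2m-1-g\approx(g+1)/2$. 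The remaining case is Frobenius number at least $2m$: there each far generator satisfies $N(a)\le g-(2m-1-t)$, and there are at most $m-t$ of them.

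The main obstacle is precisely this last case, because the crude relaxation --- bounding the far contribution by $(m-t)\bigl(g-(2m-1-t)\bigr)$ and optimizing over $m$ and $t$ freely --- only yields $(g+1)^2/4$, off by a factor of two, since it allows $t$ to be close to $m$. But $t$ near $m$ means $[m,2m-1]$ is almost all generators, forcing $S\supseteq\langle m,m+1,\dots,m+t-1\rangle$ and hence, when $t$ is large, forcing the Frobenius number below $2m$, contradicting the case hypothesis; more quantitatively, $S\supseteq\langle m,m+1,\dots,m+t-1\rangle$ bounds $g$ from above in terms of $m$ and $t$, and this is the genuine extra input that must be fed into the optimization. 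I expect the cleanest way to capture it is to iterate the near/far dichotomy across the successive blocks $[jm,(j+1)m)$ and telescope: a minimal generator lying in $[jm,(j+1)m)$ contributes at most $g-G(jm)$ to $\ew(S)$, where $G(x)=\#\{\text{gaps}\le x\}$, and $G(jm)$ grows at least linearly in $j$ because each near generator in $[m,2m-1]$ pushes a non-gap into every later block. Failing a clean closed form, one could instead try to prove by induction on $g$ a statement slightly stronger than the bound, carrying along $m$ and the Frobenius number so as to exclude the degenerate configurations; a naive induction up the semigroup tree of Remark \ref{r_st}, via $\ew(S)=\ew(\mathrm{parent}(S))+\#\{\text{minimal generators below the Frobenius number}\}$, seems harder, since this increment can individually exceed $\lfloor(g+1)^2/8\rfloor-\lfloor g^2/8\rfloor$ and only the running sum is controlled.
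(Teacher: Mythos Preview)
The statement you are attempting to prove is labeled a \emph{Conjecture} in the paper, not a theorem: the author does not prove it, but only reports a computer verification up to genus $50$. So there is no paper proof to compare against; your proposal is an attempt to settle something the paper leaves open.

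On the substance of your plan: the case where the Frobenius number is below $2m$ is clean and your argument there is correct. With $t$ generators in $[m,2m-1]$ one has $g=2m-1-t$ exactly, each such generator has at most $g-m+1$ gaps above it (since the $m-1$ gaps $1,\dots,m-1$ all lie below it), far generators contribute nothing, and maximizing $(2m-1-g)(g-m+1)$ over $m$ gives $(g+1)^2/8$, matching the conjectural extremals $\langle c,c+1,\dots,d\rangle$.

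The genuine gap, which you identify yourself, is the case Frobenius $\ge 2m$. Your crude relaxation there loses a factor of two, and the two repair strategies you sketch --- iterating the block decomposition across $[jm,(j+1)m)$, or strengthening an induction hypothesis along the semigroup tree --- are plausible directions but neither is carried to a conclusion. In particular, the observation that ``each near generator pushes a non-gap into every later block'' is true (since $a\in S$ implies $a+jm\in S$), but converting this into a usable linear lower bound on $G(jm)$, and then summing the resulting bounds on $N(a)$ over all generators to stay under $(g+1)^2/8$, is exactly the hard step and you have not done it. The induction alternative faces the obstacle you already note, that the per-step increment can exceed the allowed budget. So as it stands this is a correct treatment of the easy case together with an honest acknowledgment that the hard case remains open --- which is consistent with the conjecture's status in the paper.
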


\begin{rem}
If true, this conjecture is sharp. For $g \leq 5$, this follows from case analysis. For $g \geq 6$, this follows from a general construction. Let $\eta$ be an integer between $-2$ and $2$ inclusive such that $\eta \equiv g+1 \pmod{4}$ (there are two choices if $g \equiv 1 \pmod{4}$, and one otherwise). Let $c = \frac14 (3g+3+\eta)$ and $d = \frac14(5g+1+3\eta)$. Then the semigroup
\begin{eqnarray*}
S &=& \langle c,c+1,\cdots,d-1,d \rangle\\
&=& \NN \backslash \left\{ 1,2,\cdots,c-1,\ d+1,d+2,\cdots, 2c-1 \right\}
\end{eqnarray*}
has genus $g$ and effective weight $\frac18 (g+1)^2 - \frac18 \eta^2 = \left\lfloor \frac18 (g+1)^2 \right\rfloor$. For $10 \leq g \leq 50$, a computer search shows that these are the only semigroups of this effective weight, while for $g \leq 9$ there are some additional sporadic examples achieving the same maximum.
\end{rem}

The semigroups above (that appear, empirically, to maximize $\ew(S)$ in a given genus) also provide examples where $\codim \msg < \ew(S)$ \cite{pfl}.

\section*{Outline of the paper}

We summarize in Section \ref{s_background} several cases where $\codim \msg$ is known in the literature, including the simplest case where strict inequality $\codim \msg < \ew(S)$ occurs. Section \ref{s_dp} summarizes background on linear series and limit linear series needed for the proofs of the main theorems. Theorem \ref{t_ewbound} is proved in Section \ref{s_bound}. Section \ref{s_secundive} is purely combinatorial, and provides some preliminary results on the structure of numerical semigroups of low effective weight. Section \ref{s_exist} gives the proof of Theorem \ref{t_ewexist}.

\section*{Conventions}

Throughout this paper, we work over an algebraically closed field $k$. In Section \ref{s_background}, we assume $\chr k = 0$. A \emph{point} of a scheme will always refer to a closed point, and when we say that a \emph{general point} of a scheme satisfies a property, we mean that there exists a dense open subset in which all points satisfy the property. A \emph{curve} is always reduced, connected, and complete. A \emph{marked curve} is a pair $(C,p)$ of a curve $C$ and a point $p \in C$.

We denote by $\NN$ the set of nonnegative integers; a \emph{numerical semigroup} is a cofinite subset $S \subseteq \NN$ containing $0$ and closed under addition. The elements of $\NN \backslash S$ are called the \emph{gaps} of $S$, and the number of gaps is called the \emph{genus}. A positive element of $S$ that is not equal to the sum of two positive elements of $S$ is called a \emph{generator}, and a sum of two positive elements is called \emph{composite}.

We denote the set $\{0,g+1,g+2,\cdots\}$ by $H_g$, which we call the \emph{ordinary semigroup of genus $g$}.

\section*{Acknowledgments}

Most of this paper was developed from part of my thesis at Harvard University, and I owe Joe Harris a great debt for suggesting the questions and providing several key insights. I also thank Nathan Kaplan for a number of helpful conversations on numerical semigroups, and the anonymous referee for detailed comments on the manuscript.

\section{Background} \label{s_background}

The classification question of Weierstrass points can be asked on various levels. Hurwitz \cite{hur} first raised the simple existence question, while we are concerned with the more geometric dimension question.

\begin{qu} \label{q_exist}
For which $S$ is $\msg \neq \emptyset$?
\end{qu}
\begin{qu} \label{q_allcd}
Given $S$, how many irreducible components does $\msg$ have, and what are their codimensions?
\end{qu}
\begin{qu} \label{q_maxcd}
Given $S$, what is the maximum codimension of an irreducible component of $\msg$?
\end{qu}

The number of semigroups of genus $g$ grows exponentially with $g$ with limiting ratio $\frac{1+\sqrt{5}}{2}$ \cite{zhai}, and present knowledge, even about Question \ref{q_exist}, becomes quite sparse for large genus if all semigroups are considered (see \cite{kaplanye}). This is one reason we prefer to focus on Questions \ref{q_allcd} and \ref{q_maxcd}: if one hopes for general results, matters become much more tractable upon restricting to the more plentiful sorts of semigroups, i.e. those for which $\codim \msg$ is small compared to $g$.

This restriction, to semigroups expected to appear in low codimension, is what allowed Eisenbud and Harris to prove their rather strong results, later extended by Komeda. The downside of their results is that they needed to impose not just a quantitative restriction (weight being less than $g$) but a qualitative one: that the semigroup is primitive.

In the remainder of this section, we summarize some known results and simple cases of answers to these questions, in order to highlight the extent to which the effective weight brings many known cases under one umbrella. We conclude in \ref{ss_notew}, however, with the first example we know in which the effective weight does not give the correct codimension.

Throughout this background section, we will assume that $\chr k = 0$, as much of the literature makes this assumption.

\subsection{The work of Eisenbud, Harris and Komeda} \label{ss_ehk}

The \emph{weight} of a numerical semigroup is most simply defined as the sum of the gaps minus $\binom{g+1}{2}$. An alternate description, more suggestive of the link to the effective weight, is that $\wt(S)$ is the number of pairs $(a,b)$ where $0<a<b$, $a \in S$, and $b \not\in S$. This description shows that $\wt(S) - \ew(S)$ is equal to the number of pairs $(a,b)$ where $a<b$, $a$ is \emph{composite}, and $b$ is a gap; hence $\wt(S) = \ew(S)$ if and only if $S$ is primitive.

All semigroups satisfy $\codim \msg \leq \wt(S)$ (see Remark \ref{rem_wtbound} for one argument), and a point of $\msg$ at which equality holds locally is called \emph{dimensionally proper}. Eisenbud and Harris \cite{eh87} proved that if $S$ is primitive and $\wt(S) \leq g-2$, then $\msg$ has dimensionally proper points. Their proof made a characteristic $0$ assumption, since this assumption was built into their theory of limit linear series developed in \cite{eh86}, but modern treatments of limit linear series (e.g. \cite{oss06} or \cite{ossBook}) make no such assumption. The proofs of \cite{eh87} can therefore be carried to characteristic $p$ with no modification.

The argument of \cite{eh87} proceeds by induction on $g$. It nearly succeeds in proving the same result for primitive semigroups with $\wt(S) \leq g-1$ (rather than $g-2$), except that the inductive step fails for one very specific class of semigroups of weight $g-1$. Komeda's contribution \cite{kom91} is to prove the theorem in this special case by a different argument, without limit linear series (and with a characteristic $0$ hypothesis), thus extending the results of \cite{eh87} to the case $\wt(S) = g-1$. A second argument for this special case appears in \cite{ck}.

Eisenbud and Harris observe \cite[Corollary on p. 497]{eh87} that the primitivity hypothesis is necessary, i.e. $\codim \msg < \wt(S)$ if $S$ is non-primitive. This fact of course also now follows from our Theorem \ref{t_ewbound}. This is no minor difficulty, as many semigroups, including those that appear with low codimension in the Weierstrass stratification of $\cM_{g,1}$, are not primitive. The main example, which provided substantial motivation regarding how to refine $\wt(S)$, is the following.

\begin{eg}
A hyperelliptic curve of genus $g$ has $2g+2$ points with semigroup $\{2,4,6,\cdots,2g-2\} \cup H_{2g} = \langle 2,2g+1 \rangle$ (the ramification points of the double cover of $\PP^1$), while the rest of the points have the ordinary semigroup (see e.g. \cite[exercise I.E-3]{acgh}). Furthermore, if $2 \in S(C,p)$ then $C$ is necessarily hyperelliptic. Hence $S = \langle 2, 2g+1 \rangle$ is called the \emph{hyperelliptic semigroup}, and $\codim \msg = g-1$ (the codimension of the hyperelliptic locus in $\cM_{g}$ plus $1$).

The hyperelliptic semigroup has the distinction of having the maximum weight of all genus $g$ semigroups, namely $\binom{g}{2}$. So the weight bound is spectacularly off in this case. However $\ew(S) = g-1$.
\end{eg}

\begin{rem}
Since the semigroups of maximum weight provide a nice example where the weight bound fails to be exact (and suggested the definition of the effective weight), it seems reasonable to try to find cases where the effective weight bound fails to be exact in the semigroups of maximum effective weight. Indeed, these semigroups provide such examples; see \cite{pfl}. See also Conjecture \ref{conj_maxew} and the remark following it.
\end{rem}

One notable extension of Eisenbud and Harris's result, and method of proof, was given by Bullock \cite{bullock}. Using a variation on Eisenbud and Harris's inductive argument, Bullock proves that for the non-primitive semigroup
$$S = \{0,g-1,g+1,g+2,\cdots,2g-2\} \cup H_{2g}$$
of weight $g$, the locus $\msg$ is irreducible of codimension $g-1$. The manner in which Bullock treated a non-primitive semigroup with Eisenbud and Harris's basic method provided inspiration for our method in proving the more general Theorem \ref{t_ewexist}. Note that $S$ is ``barely non-primitive,'' as there is only one gap exceeding one composite element. See Remark \ref{r_ngsym} for more about Bullock's work.

\subsection{The Deligne bound and negatively graded semigroups} \label{ss_deligne}

The best general-purpose \textit{lower} bound on $\codim \msg$ is the Deligne bound, defined below.

\begin{defn}
For any numerical semigroup $S$, let $\lambda(S)$ be the number of gaps $b \not\in S$ such that $b + a \in S$ for all positive elements $a \in S$.
\end{defn}

\begin{prop}
Let $S$ be a numerical semigroup of genus $g$. If $\msg$ is nonempty, then $\codim \msg \geq g - \lambda(S)$.
\end{prop}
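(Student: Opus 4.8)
The natural approach is via the deformation theory of the monomial curve, following Pinkham and Rim--Vitulli (this is the source of the name: the argument is attributed to an unpublished letter of Deligne). Let $C_S = \Spec k[S]$ be the affine monomial curve, an integral curve singularity whose normalization is $\mathbb{A}^1$ and whose $\delta$-invariant is $g$. The group $\mathbb{G}_m$ acts on $C_S$ by scaling the normalizing coordinate, so the first cotangent module acquires a grading $T^1(C_S) = \bigoplus_\nu T^1(C_S)_\nu$; fix the sign convention so that $T^1(C_S)_{<0}$ is the ``moduli-bearing'' part. The key geometric input is Pinkham's dictionary: any marked curve $(C,p)$ with semigroup $S$ degenerates, by rescaling a local coordinate at $p$ (equivalently, passing to the associated graded of the pole-order filtration), to $C_S$, and $\msg$ is swept out, via a smooth surjective morphism, by the negative-weight part of the miniversal deformation of $C_S$, the one-dimensional ambiguity being the residual $\mathbb{G}_m$. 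Since this negative-weight deformation functor has tangent space $T^1(C_S)_{<0}$ and $\dim\cM_{g,1} = 3g-2$, it follows that for every irreducible component $X$ of $\msg$ one has $\dim X \le \dim_k T^1(C_S)_{<0} - 1$, and hence $\codim X \ge 3g-1-\dim_k T^1(C_S)_{<0}$. It therefore suffices to establish the purely combinatorial estimate
$$\dim_k T^1(C_S)_{<0} \ \le\ 2g-1+\lambda(S).$$

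For this, first reinterpret $\lambda(S)$: the gaps $b$ with $b + S_{>0} \subseteq S$ are precisely the pseudo-Frobenius numbers of $S$, so $\lambda(S)$ is the \emph{type} of the semigroup, equivalently the minimal number of generators of the canonical module $\omega_{k[S]}$. Now analyze $T^1(C_S)$ one graded piece at a time. A weight-$\nu$ first-order deformation perturbs the defining binomial relations of $k[S]$ by monomials of the appropriate degree, and $T^1(C_S)_\nu$ records these perturbations modulo those absorbed by coordinate changes; these are supported in a bounded range of $\nu$ cut out by the gaps of $S$. The negative-weight contributions split into a ``bulk'' term, controlled by the gaps (which produces the $2g$), and a ``top-degree'' term, which is governed by the freedom in the canonical module -- this is exactly where $\lambda(S)$ enters. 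A clean way to handle the top part is to identify the relevant top graded pieces of $T^1(C_S)$ with $\mathrm{Hom}$-groups into $\omega_{k[S]}$, whose minimal number of generators is $\lambda(S)$; one should check the resulting bound is sharp for the ordinary and hyperelliptic semigroups (where $\lambda(S) = g$ and $\lambda(S) = 1$), which is a useful consistency test.

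The main obstacle is this graded computation: obtaining the constant exactly as $2g-1+\lambda(S)$, rather than merely $O(g)$, requires careful bookkeeping of which weight-$\nu$ perturbations survive modulo trivial ones, and in particular an exact handle on the top-degree part of $T^1(C_S)$ via the canonical module. A secondary point needing care is the precise form of Pinkham's dictionary: one must verify that the family over the negative-weight part of the miniversal base of $C_S$ is versal for $\msg$ globally -- so that the dimension bound applies at every point of every component, not only at the distinguished ``monomial'' points -- and that the residual $\mathbb{G}_m$ is responsible for exactly a one-dimensional drop. Neither difficulty is conceptually deep, but both are where the exact numerology of the statement gets pinned down.
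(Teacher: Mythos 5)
Your framework is the one the paper itself relies on: the paper proves this proposition purely by citation, deferring to Deligne, to Pinkham (Theorems 10.3 and 13.9), and to Rim--Vitulli (Corollary 6.3), and your reduction via Pinkham's dictionary---degenerate $(C,p)$ to the monomial curve $C_S=\Spec k[S]$, identify $\msg$ up to a one-dimensional $\mathbb{G}_m$-ambiguity with the negative-weight part of the (smoothing locus in the) miniversal deformation---is exactly the intended route, and your identification of $\lambda(S)$ with the type of $S$ is correct. But as a proof the proposal has a genuine gap, which you flag yourself: everything is made to rest on the inequality $\dim_k T^1(C_S)_{<0}\le 2g-1+\lambda(S)$, and the paragraph meant to establish it (``bulk'' term from the gaps, ``top-degree'' term from the canonical module) is a plan rather than an argument. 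The ``careful bookkeeping'' you defer is precisely the content of the proposition; the consistency checks at the ordinary and hyperelliptic semigroups do not substitute for it.

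There is also a structural mismatch: the statement you reduce to is stronger than what Deligne's theorem provides, and it is not clear it is true as stated. Deligne's result (as deployed by Pinkham and Rim--Vitulli) computes the dimension of the \emph{smoothing components} of the semiuniversal deformation of the curve singularity, in the form $3\delta-m$ with $m$ computable combinatorially for $k[S]$; it says nothing directly about the graded tangent space. Since $k[S]$ may be obstructed, $\dim_k T^1(C_S)_{<0}$ can a priori exceed the dimension of every component of the negative-weight versal base, so bounding $T^1_{<0}$ is harder than what you need---and what you need is only a bound on the dimension of the negative-weight locus of deformations with smooth fibers, which is exactly what Deligne's formula yields once $m$ is expressed in terms of $\lambda(S)$ (Rim--Vitulli's contribution, and the form in which the paper cites the bound). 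To close the argument you should either carry out the graded computation and actually prove your $T^1_{<0}$ estimate (which I do not believe appears in the literature in that form), or replace the tangent-space bound $\dim X\le\dim_k T^1(C_S)_{<0}-1$ by the component-wise bound through the smoothing locus and invoke Deligne--Pinkham--Rim--Vitulli directly, as the paper does. The secondary points you raise about Pinkham's dictionary (global versality for $\msg$, the one-dimensional $\mathbb{G}_m$ drop) are genuine but are handled in Pinkham's paper; the quantitative estimate is the real gap.
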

\begin{proof}
This bound follows from results of Deligne \cite{deligne}, first applied to the the moduli of Weierstrass points by Pinkham \cite[Theorems 10.3, 13.9]{pinkham}. For a discussion of the bound in this form, see \cite[Corollary 6.3]{rv}.
\end{proof}

In most cases, the Deligne bound and the effective weight bound do not coincide. Interestingly, the cases where they do coincide are semigroups of a particular structure: they are the ``negatively graded semigroups'' studied by Rim and Vitulli. Rim and Vitulli prove that a semigroup is negatively graded (a deformation-theoretic condition) if and only if it is one of the following \cite[Theorem 4.7]{rv}.

\begin{defn}
Let $g \geq 2$ be a positive integer. For each integer $e$ between $1$ and $g-1$ inclusive, define:
\begin{eqnarray*}
\NGS^1_{g,e} &=& (g-e+1) \cdot \ZZ \cup H_c\\
&& \mbox{where } c = g+\left\lfloor \frac{g}{g-e} \right\rfloor.\\
\NGS^2_{g,e} &=& \{0,g,g+1,\cdots,g+e-1\} \cup H_{g+e}.\\
\end{eqnarray*}
and also define, for $g \geq 3$,
$$\NGS^3_{g} = \{0,g-1,g+1,g+2,\cdots,2g-2\} \cup H_{2g-1}.$$
\end{defn}

Observe that $\ew(\NGS^1_{g,e}) = \ew(\NGS^2_{g,e}) = e$, and $\ew(\NGS^3_g) = g-1$. Note that $\NGS^1_{g,1} = \NGS^2_{g,1}$ and $\NGS^1_{3,2} = \NGS^3_3$, but in all other cases the semigroups described above are distinct. Therefore for $g \geq 4$, there is one negatively graded semigroup of effective weight $1$, two negatively graded semigroups of effective weight $e$ for $2 \leq e \leq g-2$, and three negatively graded semigroups of effective weight $g-1$. Half of the semigroups $\NGS^1_{g,e}$ (those for which $e \leq \frac{g}{2}$), and all of the semigroups $\NGS^2_{g,e}$ are primitive, while $\NGS^3_g$ and the other half of the $\NGS^1_{g,e}$ are not.

\begin{rem} \label{r_ngsym}
The three semigroups $\NGS^1_{g,g-1}$, $\NGS^2_{g,g-1}$, $\NGS^3_g$ of effective weight $g-1$ were studied by Bullock in \cite{bullock}; they correspond to the three irreducible components of the locus $\{(C,p) \in \cM_{g,1}:\ (2g-2)p \sim K_C \}$ of ``subcanonical points;'' see \cite{kz} for further background about this locus. All three are called \emph{symmetric} semigroups, since a positive integer $n$ is a gap if and only if $2g-1-n$ is not a gap; this condition is equivalent to the condition that $2g-1$ is a gap.
\end{rem}

\begin{rem}
The semigroups $\NGS^2_{g,e}$ are among the first semigroups for which $\msg$ was studied in detail; see \cite[Theorem 14.7]{pinkham}. 
\end{rem}

In fact, the negatively graded semigroups are precisely the semigroups for which the Deligne lower bound (on codimension) and the effective weight upper bound coincide.

\begin{prop}
For any numerical semigroup $S$ of genus $g$, $$\ew(S) \geq g - \lambda(S),$$ with equality if and only if $S$ is either ordinary or one of the semigroups $\NGS^1_{g,e}, \NGS^2_{g,e}$, or $\NGS^3_g$.
\end{prop}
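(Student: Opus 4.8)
The plan is to reduce the inequality to a statement about gaps and their "reach," and then to analyze the equality case by pushing the slack in the inequality down to a rigid combinatorial structure. Recall $\ew(S) = \sum_{\text{gaps } b} \#\{\text{generators } a : 0 < a < b\}$, while $\lambda(S)$ counts gaps $b$ such that $b + a \in S$ for every positive $a \in S$; equivalently, $g - \lambda(S)$ counts gaps $b$ for which there exists a positive $a \in S$ with $b + a \notin S$. So $g - \lambda(S) = \#\{\text{gaps } b : \exists\, a \in S_{>0},\ a+b \text{ a gap}\}$, and it suffices to show that each such gap $b$ contributes at least one to $\ew(S)$, i.e. has at least one generator below it, and then to understand when the total counts agree exactly.

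First I would establish the inequality $\ew(S) \ge g - \lambda(S)$. Fix a gap $b$ that is "not $\lambda$-terminal," meaning $a_0 + b$ is a gap for some least positive $a_0 \in S$. I claim $S$ has a generator $a$ with $0 < a < b$. Indeed, $a_0$ is a generator: if $a_0 = s + t$ with $s,t \in S_{>0}$, then $s,t < a_0$, so by minimality $s+b, t+b \in S$; but then $a_0 + b = s + (t+b) \in S$ unless... — more carefully, one argues that among positive elements $a$ with $a+b$ a gap, a minimal one must be a generator, since writing it as a composite produces a smaller such element or forces $a+b \in S$. And such a minimal $a$ satisfies $a < b$: if $a \ge b$ then $a + b \ge 2b$; but if $c$ denotes the conductor (largest gap plus one) we need $a < b$, which follows because for the minimal bad $a$ one can show $a \le$ (the Frobenius number) $< b$ would fail — here I should instead use that $a$ being a generator with $a + b$ a gap forces $a < c \le$ something comparable to $b$; the cleanest route is: if $a \ge b$, then since $b$ is a gap and $a \in S$, $a \ne b$, so $a > b$, and then $a - b$ lies between $0$ and $a$; chasing this via the semigroup structure shows $a$ was not minimal or $a+b \in S$. (The precise bookkeeping here is routine but must be done carefully.) This produces an injection from $\lambda$-nonterminal gaps into (gap, generator-below-it) pairs — actually we only need that each contributes $\ge 1$ to the sum defining $\ew(S)$ — giving $\ew(S) \ge g - \lambda(S)$.

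The main obstacle is the equality analysis. Equality forces two things simultaneously: every $\lambda$-terminal gap has \emph{no} generator below it, and every $\lambda$-nonterminal gap has \emph{exactly one} generator below it. The first condition is very restrictive: a gap $b$ with no generator below it means every generator of $S$ exceeds $b$; since the set of gaps is closed under "$\lambda$-terminality going up" in a suitable sense, this pins down the gap set to an initial segment pattern. The second condition, combined with the first, forces the generators to occur in a single tight cluster and the gaps to form at most two runs. Concretely, I would argue: let $a_1 < a_2 < \cdots$ be the generators; equality forces that once a generator appears, the "new" gaps created above it are limited, and one runs the classification by cases on how many generators lie below the Frobenius number. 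Matching against the definitions of $\NGS^1_{g,e}$ (an arithmetic-progression-generated semigroup $d\ZZ \cup H_c$), $\NGS^2_{g,e}$ (generators $g, g+1, \dots, g+e-1$ plus the ordinary tail) and $\NGS^3_g$, together with the ordinary semigroup $H_g$ itself, should exhaust the possibilities, and conversely a direct computation of $\lambda$ and $\ew$ on each of these families — using $\ew(\NGS^1_{g,e}) = \ew(\NGS^2_{g,e}) = e$, $\ew(\NGS^3_g) = g-1$ recorded earlier, and computing $\lambda$ by counting which gaps are killed by adding a small semigroup element — verifies equality. The delicate point throughout is that the classification must be \emph{exhaustive}: I would organize it by the value of the smallest generator $m$ (the multiplicity), handling $m = g+1$ (ordinary), small $m$ (forcing an arithmetic progression, hence $\NGS^1$), and the symmetric intermediate cases ($\NGS^2$, $\NGS^3$) separately, since these correspond exactly to the negatively graded semigroups of Rim--Vitulli — which is presumably not a coincidence but rather reflects that "every gap is either reached by everything or below no generator" is a reformulation of negative gradedness.
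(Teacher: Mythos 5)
Your proof of the inequality rests on the claim that every gap $b$ outside $\Lambda$ (every gap for which some positive $a \in S$ has $a+b \notin S$) must have a generator strictly below it, and you yourself flag that the ``bookkeeping'' needed to force $a < b$ is left undone. That bookkeeping cannot be done, because the claim is false. In the hyperelliptic semigroup $S = \langle 2, 2g+1 \rangle$ (with $g \geq 2$) the gap $b=1$ satisfies $1+2 = 3 \notin S$, so $1 \notin \Lambda$, yet no generator lies below it; and this is not just an artifact of $b=1$: in $S = \langle 5,7,9,11,13 \rangle$ (genus $6$) the gap $b=3$ has $3+5 = 8 \notin S$, so $3 \notin \Lambda$, while the smallest generator is $5$. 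The inequality still holds in these examples because the pairs counted by $\ew(S)$ need not have the non-$\Lambda$ gap itself as their gap coordinate. This is exactly where the paper's argument differs: to each pair $(a,b)$ with $a$ a generator, $b$ a gap, $a<b$, it assigns the \emph{difference} $b-a$, which is automatically a gap not in $\Lambda$; conversely, given a gap $b' \notin \Lambda$, a minimal positive $a$ with $a+b' \notin S$ is a generator (your minimality argument is the right one here), and the pair it produces is $(a,\, a+b')$ --- whose gap coordinate is $a+b'$, not $b'$ --- so no inequality $a < b'$ is ever required. Surjectivity of $(a,b) \mapsto b-a$ onto the gaps outside $\Lambda$ gives $\ew(S) \geq g - \lambda(S)$.

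Since the first step fails, the equality criterion you build on it (``every $\Lambda$-gap has no generator below it, every non-$\Lambda$ gap exactly one'') is also wrong: the hyperelliptic semigroup has $\ew(S) = g-1$ and $\lambda(S) = 1$, so it attains equality, yet its unique $\Lambda$-gap $2g-1$ has the generator $2$ below it. The correct equality condition, which drops out of the counting above, is that the differences $b-a$ over all pairs $(a,b)$ are pairwise distinct; the paper then classifies the equality cases by a case analysis on the two smallest generators $m,n$ (no gaps above $n$; $n = m+1$; $n \geq m+2$ with a gap above $n$), arriving at $\NGS^1_{g,e}$, $\NGS^2_{g,e}$, $\NGS^3_g$ and the ordinary semigroup. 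Your classification paragraph is in any case only a plan (``should exhaust the possibilities'') organized around the wrong criterion, so it cannot be repaired without first replacing the inequality argument along the lines just described.
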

\begin{proof}
Let $E$ denote the set of pairs $(a,b) \in \NN^2$ such that $a<b$, $a$ is a generator of $S$, and $b$ is a gap. Let $\Lambda$ denote the set of gaps $b$ such that $b + a \in S$ for all positive elements $a \in S$. By definition, $\ew(S) = |E|$ and $\lambda(S) = |\Lambda|$.

For all $(a,b) \in E$, $b-a$ is necessarily a gap that is not in $\Lambda$. Conversely, any gap $b'$ that is not an element of $\Lambda$ must be equal to $b-a$ for some $(a,b) \in E$. This shows that the complement of $\Lambda$ in $\NN \backslash S$ has at most $|E|$ elements, hence $\ew(S) \geq g - \lambda(S)$. Furthermore, this argument shows that equality holds if and only if each $(a,b) \in E$ gives a \emph{distinct} difference $b-a$. Assume now that $S$ satisfies $\ew(S) = g - \lambda(S)$; we will show that $S$ is of one of the three forms stated. The case where $S$ is ordinary is immediate, so assume that $S$ is non-ordinary. Denote by $m,n$ the first two generators of $S$.

\textit{Case 1:} Suppose there are no gaps above $n$. In this case $S = \NGS^1_{g,g-m+1}$.

\textit{Case 2:} Suppose that $n = m+1$. There can be no two consecutive gaps $b,b+1$ of $S$ greater than $m+1$, since otherwise $(m,b)$ and $(m+1,b+1)$ both lie in $E$. Similarly, there is at most one gap $b$ such that $b-1$ is a generator. Since all elements of $S$ less than $2m$ are generators, these two facts show that there is at most one gap $b$ of $S$ between $m$ and $2m$. If there are no gaps between $m$ and $2m$, then $S$ is ordinary. If there is one gap $b$ between $m$ and $2m$, then $S$ contains $\{m,m+1,\cdots,b-1, b+1,b+2,\cdots,2m-1\}$, which generate all integers greater than $2m$ (recall that $b > n = m+1$ by assumption), so in fact $b$ is the only gap greater than $m$. Hence $S = \NGS^2_{g,b-g}$.

\textit{Case 3:} Suppose that $n \geq m+2$ and there is some gap $b > n$. Assume that $b$ is the smallest such gap. The gap $b$ is less than $m+n$, since otherwise $b-m$ would be an element of $S$ and $b$ could not be a gap. Since $(n,b) \in E$ and $1 \leq b-n \leq m-1$, it follows that not all of $(m,m+1),\cdots,(m,2m-1)$ can lie in $E$; this implies that $n \leq 2m-1$, hence $m+3 \leq b \leq 3m-1$. The pair $(m,m+1)$ lies in $E$, so $(b-1,b)$ cannot lie in $E$, hence $b-1$ is a composite element of $S$. The only possibility is that $b = 2m+1$. Therefore $(2m-1,2m+1) \in E$. This shows that $(m,m+2) \not\in E$, so $n = m+2$. Therefore $m+2,m+3,\cdots,2m \in S$ and $2m+1 \not\in S$. The numbers $m,m+2,m+3,\cdots,2m$ generate all integers greater than $2m+1$, so $2m+1$ is the largest gap of $S$. Therefore $m = g-1$ and $S = \NGS^3_g$ in this case.
\end{proof}

\begin{rem}
It would be interesting to find a more direct connection between negative grading and the equality of the Deligne and effective weight bounds. It seems improbable that the fact that the same list of semigroups is found in both contexts is merely a combinatorial coincidence.
\end{rem}

\subsection{Semigroups of low genus}

The exact codimension of $\msg$ is known for all semigroups of genus less than or equal to $6$; in all of these cases, $\ew(S) = \codim \msg$. We now summarize where these results can be found in the literature.

Most of these loci $\msg$ have been described by Nakano; see Table 2 of \cite{nakano}\footnote{There is a typographical error in that table: the semigroup $\langle 5,6,7 \rangle$ is stated in one column to be $11$-dimensional, while the following column indicates that $\msg$ is an open subset of a $10$-dimensional weighted projective space. The second column is correct.}. Of the rows in Nakano's table where $\dim \msg$ is not known, all but one are in fact one of the negatively graded semigroups discussed in Section \ref{ss_deligne}, hence $\codim \msg$ is equal to $\ew(S)$ in those cases. The remaining semigroup is $S = \langle 5,7,9,11,13\rangle$ ($N(6)_{12}$, in the naming system of \cite{nakano}). The discussion in \cite[Section 2.2]{bullock2014} shows that for this semigroup, $\msg$ has a component of codimension $\ew(S)$ (equal to $\wt(S)$ in this case since $S$ is primitive), and the main theorem of \cite{bullock2014} shows that this is the only component.

\subsection{Two-generator semigroups} We now show a calculation showing that any numerical semigroup $S$ with only two generators exists as a Weierstrass semigroup, and that $\msg$ is irreducible of codimension $\ew(S)$ in $\cM_{g,1}$. This furnishes an infinite family of non-primitive semigroups of effective weight larger than $g$ for which the effective weight gives the correct codimension.

Let $1 < e < d$ be relatively prime integers, and let $S = \langle e,d \rangle$. The genus of $S$ is $\frac12 (e-1)(d-1)$, as a short combinatorial argument shows.

The effective weight is the number of gaps greater than $e$ plus the number of gaps greater that $d$, which can be expressed as:
$$
\ew(S) = 2g - d - e + \left\lfloor \frac{d}{e} \right\rfloor + 2.
$$

To analyze $\msg$, we use the following description.

\begin{prop}
Let $S,g,d,e$ be as above, and let $P$ denote the convex lattice polygon $\{ (i,j) \in \textbf{R}^2:\ i,j \geq 0,\ ei + dj \leq ed \}$. Let $(c_{i,j})_{(i,j) \in P}$ be coefficients such that the affine curve $\widetilde{C}$ defined by $$0 \ = \sum_{(i,j) \in P \cap \ZZ^2} c_{i,j} x^i y^j$$ is smooth, and such that the coefficients $c_{d,0}$ and $c_{0,e}$ are nonzero. Then the completion $C$ of $\widetilde{C}$ has only one additional point $p$, which has Weierstrass semigroup $S$. Viewing the coordinates $x$ and $y$ as rational functions on $C$ regular on $\widetilde{C}$, the pole orders of $x$ and $y$ at $p$ are $d$ and $e$, respectively.

Conversely, given any $(C,p) \in \msg$ and rational functions $f,g$ of pole orders $e,d$ at $p$ and regular elsewhere, the map $(f,g)$ embeds $\widetilde{C} = C \backslash \{p\}$ as an affine curve of the form above.
\end{prop}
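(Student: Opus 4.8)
The plan is to treat the two directions by standard means: a weighted-projective compactification for the existence statement, and a filtration (associated-graded) argument for the converse.

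For the forward direction, first I would view $\AAA^2$ as the chart $\{z\neq 0\}$ of the weighted projective plane $\PP(e,d,1)$, with $x$ and $y$ carrying weights $e$ and $d$, and take the closure $\overline C=V(\widehat F)$, where $\widehat F$ is the weighted homogenization of the defining equation, of degree $ed$. The key local computation is at the line at infinity $L=\{z=0\}$: because $\gcd(d,e)=1$, the only lattice points of $P$ on the edge $\{ei+dj=ed\}$ are the two vertices $(d,0)$ and $(0,e)$, so $\widehat F|_L=c_{d,0}x^d+c_{0,e}y^e$; since $x^d/y^e$ is an honest coordinate on $L\cong\PP^1$ and $c_{d,0},c_{0,e}\neq 0$, this form vanishes at exactly one point $p\in L$, and $p$ avoids the two singular points $[1:0:0]$, $[0:1:0]$ of $\PP(e,d,1)$. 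Next, B\'ezout's theorem in $\PP(e,d,1)$ (legitimate since $p$ lies in the smooth locus and $L\not\subseteq\overline C$) shows $L$ and $\overline C$ meet at $p$ with multiplicity one; hence $\overline C$ is smooth at $p$ with $z$ a uniformizer there. Together with the hypothesis that $\widetilde C$ is smooth, this identifies $\overline C$ with the smooth completion $C$, shows $C\setminus\widetilde C=\{p\}$, and makes the orders of pole of $x$ and $y$ at $p$ immediate from the local picture (where $z$ vanishes simply while $x,y$ do not). Finally, adjunction in $\PP(e,d,1)$ gives $2g(C)-2=ed-e-d-1$, so $g(C)=\tfrac12(e-1)(d-1)$, the genus of $\langle e,d\rangle$ recalled above; and since $e,d\in S(C,p)$ forces $\langle e,d\rangle\subseteq S(C,p)$ while the two numerical semigroups have equally many gaps, they are equal.

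For the converse, take $(C,p)\in\msg$ and functions $f,g$ regular on $\widetilde C=C\setminus\{p\}$ of pole orders $e$ and $d$ at $p$, which exist because $e,d\in S$. The aim is to show $(f,g)$ is a closed immersion, i.e.\ that the coordinate ring $R=\Gamma(\widetilde C,\cO_C)=\bigcup_n L(np)$ is generated over $k$ by $f$ and $g$. Filtering $R$ by pole order at $p$, its associated graded ring is the semigroup algebra $k[S]$: by Riemann--Roch $\dim L(np)/L((n-1)p)$ equals $1$ if $n\in S$ and $0$ otherwise, and multiplication adds pole orders. As $S=\langle e,d\rangle$, the symbols of $f$ and $g$ generate $k[S]$, whence a routine filtered-to-graded lifting argument gives $R=k[f,g]$. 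Then $(f,g)$ presents $\widetilde C=\Spec R$ as $V(F)\subseteq\AAA^2$ with $F$ irreducible, the kernel of $k[X,Y]\twoheadrightarrow R$ being a height-one prime of a UFD, hence principal. To recover the Newton polygon, grade $k[X,Y]$ by $\deg X=e$, $\deg Y=d$: the induced surjection of associated graded rings is $k[X,Y]\to k[S]=k[X,Y]/(X^d-Y^e)$, with kernel $(X^d-Y^e)$; as this kernel is the initial ideal of $(F)$, which is generated by the leading form of $F$ (since $k[X,Y]$ is a domain), the leading form of $F$ is a nonzero scalar multiple $\lambda(X^d-Y^e)$. Hence $F=\lambda X^d-\lambda Y^e$ plus monomials $X^iY^j$ with $ei+dj<ed$; in particular every monomial of $F$ has exponent vector in $P$, and the corner coefficients $c_{d,0}=\lambda$, $c_{0,e}=-\lambda$ are nonzero --- exactly the stated form.

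I expect the main obstacle, common to both directions, to be the control of the single point at infinity. In the forward direction it is the multiplicity-one intersection with $L$: this is the only place the coprimality of $d,e$ and the nonvanishing of $c_{d,0},c_{0,e}$ are used, and it is what simultaneously delivers smoothness of the completion, the fact that only one point is added, and the pole orders. In the converse the parallel crux is the identification of the associated graded of $R$ with the semigroup ring $k[S]$, which is precisely what makes the two generators of $S$ enough to generate $R$; everything after that is bookkeeping with the weighted grading on $k[X,Y]$.
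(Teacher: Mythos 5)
Your proof is correct and follows essentially the same route as the paper: the forward direction compactifies in $\PP(e,d,1)$ and analyzes the boundary through the coordinate $x^d/y^e$ exactly as the paper does (your genus count by adjunction replaces the paper's lattice-point/Pick's theorem computation, and the pole orders your local picture yields --- $e$ for $x$, $d$ for $y$ --- agree with the paper's own proof; the proposition statement appears to have them transposed). Your converse, generating $\Gamma(\widetilde{C},\cO_C)$ by $f,g$ via the pole-order filtration and pinning down the defining equation through the initial ideal, is a slightly more systematic rendering of the paper's direct argument (cancel the degree-$de$ poles of $f^d$ and $g^e$, then note there is no lower-degree relation), but it is the same underlying idea and, if anything, justifies the Newton polygon and the nonvanishing of $c_{d,0},c_{0,e}$ more completely.
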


\begin{proof}
Embed the affine plane as the set $U = \{ (x,y,1) \}$ in the weighted projective plane $\PP(e,d,1)$, and let $C$ denote the closure of $\widetilde{C}$ in $\PP(e,d,1)$. Denote by $X,Y,Z$ the weighted homogeneous coordinates on $\PP(e,d,1)$. The equation of $C$ is 
\begin{equation} \label{eq:toric}
0 = \sum_{(i,j) \in P \cap \ZZ^2} c_{i,j} X^i Y^j Z^{de-ei-dj}.
\end{equation}

Note that the non-vanishing of $c_{d,0}$ and $c_{0,e}$ ensures that the scheme cut out by this homogeneous equation has no components supported on the complement of $U$, so since this scheme matches $\widetilde{C}$ on $U$ it is indeed equal to the closure of $\widetilde{C}$.

Neither of the points $(1,0,0), (0,1,0)$ lie on $C$ since $c_{d,0}$ and $c_{0,e}$ are nonzero. Therefore any points of $C \backslash \widetilde{C}$ lie on $\{(x,y,0):\ x,y \neq 0 \} \cong \Spec k [ u,u^{-1}]$, where $u = X^d Y^{-e}$. The scheme-theoretic intersection of $C$ with this curve is given by the equation $c_{d,0} u + c_{0,e} = 0$. Hence $C$ meets the boundary transversely in a single point; it follows that $C$ is smooth, hence it is the completion of $\widetilde{C}$, and has exactly one additional point on the boundary; denote this point by $p$.

The rational functions $x$ and $y$ are regular on $\widetilde{C}$ and their divisors of zeros are degree $e$ and $d$, respectively, hence they have poles of orders $e$ and $d$ at $p$. It follows that the Weierstrass semigroup of $p$ contains $e$ and $d$, hence it contains all of $S$. It suffices to verify that the genus of $C$ is equal to the genus of $S$, which is $\frac12 (d-1)(e-1)$. This can be deduced from standard results in the geometry of toric surfaces; we summarize an argument using results from \cite{cls_toric}. To the convex lattice polygon $P$, we may associate, as described in \cite{cls_toric}, a toric variety $X_P$ together with a projective embedding. The variety $X_P$ is isomorphic to $\PP(e,d,1)$ \cite[Exercise 10.2.6(a)]{cls_toric}. The hyperplane sections in this embedding are subschemes cut out by equations of the form of Equation \ref{eq:toric}, so the curve $C$ is one such hyperplane section. By \cite[Proposition 10.5.8]{cls_toric}, the arithmetic genus of the subscheme cut out by Equation \ref{eq:toric} is equal to the number of interior lattice points of $P$. The area of $P$ is $\frac12 de$, and the number of boundary vertices of $P$ is $d+e+1$ (since $d,e$ are relatively prime, there are no lattice points interior to the edge from $(d,0)$ to $(0,e)$, so we need only count the points on the other two edges). It follows from Pick's theorem that the number of interior lattice points of $P$ is $\frac12 (d-1)(e-1)$, as desired.

For the converse, suppose that $f,g$ are rational functions on $C$ as in the Proposition statement, and let $\widetilde{C}$ be $C \backslash \{p\}$. Then $(f,g)$ defines a map from $\widetilde{C}$ to the affine plane. The ring generated by $f$ and $g$ includes functions of every possible pole order at $p$, hence this ring includes all regular functions on $\widetilde{C}$, and $(f,g)$ is an embedding. Both $f^d$ and $g^e$ have pole order $de$ at $p$, so some linear combination of them has a strictly smaller pole order, hence is expressible as a linear combination of functions $f^i g^j$, where $ei+dj \leq de$. In other words, $\widetilde{C}$ satisfies a relation of the form $0 = \sum_{(i,j) \in P \cap \ZZ^2} c_{i,j} x^i y^j$. Since there can be no relations of smaller degree, this must be the generator of the ideal of (the image in the affine plane of) $\widetilde{C}$. 
\end{proof}

We can use this description to determine the dimension of $\msg$. The dimension of the space of curves in the affine plane of the form in the Proposition is $|P \cap \ZZ^2|-1$. Using Pick's theorem and the fact that there are $d+e+1$ vertices on the boundary of $P$ (as in the proof of the Proposition), this dimension is $\frac12 (d+1)(e+1)$. This exceeds $\dim \msg$ by the dimension of the set of ways to embed a given $(C,p) \in \msg$ in this manner, which is equal to $h^0 ( \cO_C(e \cdot p)) + h^0(\cO_C(d \cdot p))$, which in turn is equal to $4 + \lfloor \frac{d}{e} \rfloor$. Therefore
\begin{eqnarray*}
\dim \msg  &=& \frac12(d+1)(e+1) - 4-  \left\lfloor \frac{d}{e} \right\rfloor\\
&=& g + d + e - 4 -  \left\lfloor \frac{d}{e} \right\rfloor.
\end{eqnarray*}

Combining with the earlier calculation of $\ew(S)$, we have proved:

\begin{prop}
Let $S = \langle e,d \rangle$ be a numerical semigroup with two generators. Then $\msg$ is irreducible of codimension $\ew(S)$ in $\cM_{g,1}$.
\end{prop}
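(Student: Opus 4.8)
The Proposition is, at this stage, essentially a matter of assembling pieces: the two preceding Propositions of this subsection give a concrete parametrization of $\msg$, the dimension bookkeeping has just been carried out, and $\ew(S)$ has been computed. So the plan is to (i) extract irreducibility from the parametrization and (ii) subtract dimensions to read off the codimension, the one point that genuinely needs care being that the parametrizing morphism is equidimensional with fibers of the asserted dimension.

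For irreducibility, I would let $V \subseteq \AAA^P$ be the open subscheme of coefficient vectors $(c_{i,j})_{(i,j)\in P}$ for which $\widetilde C$ is smooth and $c_{d,0}, c_{0,e} \neq 0$. The first Proposition of this subsection provides a morphism $\phi\colon V \to \cM_{g,1}$, $(c_{i,j}) \mapsto (C,p)$, and its converse half shows $\phi(V) = \msg$. Now $V$ is a nonempty open subset of the irreducible variety $\AAA^P$ — nonempty because a general member of the relevant linear system on the smooth locus of the toric surface $\PP(e,d,1)$ is smooth by Bertini and avoids the two singular points, and the corner-coefficient conditions are open — so $V$ is irreducible, and therefore so is its image $\msg$.

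For the codimension I would formalize the count already indicated. Working up to the scalar on the defining equation, $V$ maps onto the variety of smooth embedded curves with Newton polygon $P$ and nonzero corner coefficients, which has dimension $\#P - 1 = \tfrac12(d+1)(e+1) = g + d + e$ (Pick's theorem together with the genus formula $g = \tfrac12(d-1)(e-1)$, $\#P$ being the number of lattice points of $P$), and the fiber of this map over $(C,p)$ is the set of realizations of $C\setminus\{p\}$ as such a curve, i.e. the pairs $(f,g)$ of coordinate functions with pole orders exactly $e$ and $d$ at $p$ and no other poles; this is an open subset of $H^0(\cO_C(e\cdot p)) \times H^0(\cO_C(d\cdot p))$, of dimension $h^0(\cO_C(e\cdot p)) + h^0(\cO_C(d\cdot p)) = 2 + (\lfloor d/e\rfloor + 2)$, using $S \cap [0,e] = \{0,e\}$ and $S \cap [0,d] = \{0,e,2e,\dots,\lfloor d/e\rfloor e, d\}$. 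Hence $\dim\msg = g + d + e - 4 - \lfloor d/e\rfloor$, and with $\dim\cM_{g,1} = 3g - 2$ and $\ew(S) = 2g - d - e + \lfloor d/e\rfloor + 2$ we obtain $\dim\cM_{g,1} - \dim\msg = 2g + 2 - d - e + \lfloor d/e\rfloor = \ew(S)$.

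The only genuinely delicate point is the equidimensionality of the parametrizing morphism and the exact identification of its fibers: one must keep consistent track of the global scalar on the defining equation against the independent rescalings of the two coordinate functions $f$ and $g$, so that the dimension of the parameter space, the dimension of the fibers, and the dimension of $\msg$ add up correctly. Once that bookkeeping is pinned down the argument is complete; low genera, where $S$ reduces to the ordinary semigroup and both sides of the equality are $0$, require no separate treatment.
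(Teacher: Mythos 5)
Your proposal is correct and takes essentially the same route as the paper: irreducibility comes from the parametrization of $\msg$ by an (irreducible, open) locus in the coefficient space of equations with Newton polygon $P$, and the codimension from the count $\dim \msg = \tfrac12(d+1)(e+1) - h^0(\cO_C(e\cdot p)) - h^0(\cO_C(d\cdot p)) = g+d+e-4-\lfloor d/e\rfloor = \dim \cM_{g,1} - \ew(S)$. The only difference is that you make explicit some points the paper leaves implicit (nonemptiness via Bertini, and the scalar/fiber bookkeeping in the parametrizing morphism).
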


\subsection{Total inflection points of nodal plane curves}

Another naturally arising class of semigroups for which the effective weight bound is exact are those arising from nodal plane curves. These have been investigated by Coppens and Kato \cite{ck}. Although they do not explicitly analyze the dimension of $\msg$, their results readily give its value, which coincides with the value that the effective weight would predict.

\begin{defn} \label{d_nddelta}
Let $d \geq 3$ be an integer, and $\delta$ a nonnegative integer less than $\binom{d-1}{2}$. Let $$N_{d,\delta} = \langle d-1,d \rangle \cup H_c,$$
where $g = \binom{d-1}{2} - \delta$ and $c$ is the $g$th gap in $\langle d-1,d \rangle$.\end{defn}

\begin{rem}
The genus of the semigroup $\langle d-1,d \rangle$ is $\binom{d-1}{2}$, so this is well-defined. The semigroup $N_{d,\delta}$ can be thought of as the ``simplest'' (e.g. the lowest-effective-weight) semigroup of genus $g$ containing both $d$ and $d-1$. It can also be described as the genus $g$ ancestor of $\langle d-1,d\rangle$ in the semigroup tree (see Remark \ref{r_st}).
\end{rem}

\begin{thm}[{\cite[Theorem 2.3]{ck}}] \label{t_ck}
Let $L$ be a fixed line in $\PP^2$. Let $X$ denote the variety of degree $d$ plane curves $C$ with $\delta$ simple nodes and smooth at all other points, such that $C$ intersects $L$ at a smooth point of $C$ to multiplicity $d$. Then for a general point $[C] \in X$, the Weierstrass semigroup of $(C,p)$ is $N_{d,\delta}$.
\end{thm}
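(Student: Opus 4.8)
The plan is to compute $S(\widetilde C,p)$ directly, where $\nu\colon\widetilde C\to C$ is the normalization and $p$ also denotes the unique point of $\widetilde C$ over the flex (unique since $p$ is a smooth point of $C$); note that $L\cap C=d\cdot p$ pulls back to $d\cdot p$ on $\widetilde C$, and that every node of $C$ lies off $L$. We may take $C$ irreducible. A plane curve of degree $d$ has arithmetic genus $\binom{d-1}{2}$ and each node drops the geometric genus by one, so $\widetilde C$ has genus $g=\binom{d-1}{2}-\delta$, the genus of $N_{d,\delta}$. In coordinates with $p=[0:1:0]$ and $L=\{X=0\}$, the functions $Z/X$ and $Y/X$ are regular on $C\setminus\{p\}$ with poles of orders $d-1$ and $d$ at $p$, so $\langle d-1,d\rangle\subseteq S(\widetilde C,p)$; in particular every gap of $S(\widetilde C,p)$ is a gap of $\langle d-1,d\rangle$. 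By the classical adjoint theory, $H^0(\widetilde C,\omega_{\widetilde C})$ is the space of degree $d-3$ plane curves through all $\delta$ nodes, with $\operatorname{div}(\omega_A)=\nu^*(A\cdot C)-\sum_i(z_i'+z_i'')$ for such an adjoint $A$; since $p$ avoids the nodes, $\operatorname{ord}_p(\omega_A)=I_p(A,C)$, the intersection multiplicity of $\{A=0\}$ with $C$ at $p$. As the gaps of $S(\widetilde C,p)$ are the integers $1+\operatorname{ord}_p(\omega)$ over nonzero $\omega\in H^0(\widetilde C,\omega_{\widetilde C})$, the theorem reduces to showing that $T:=\{\,I_p(A,C):A\text{ a degree }d-3\text{ adjoint}\,\}$ equals $\{b_1-1,\dots,b_g-1\}$, where $b_1<\cdots<b_{\binom{d-1}{2}}$ list the gaps of $\langle d-1,d\rangle$.

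Next I would analyze the filtration $W_v:=\{A\in H^0(\PP^2,\cO(d-3)):I_p(A,C)\ge v\}$ of the full space of degree $d-3$ forms. Because $I_p(A,C)$ depends only on the germ of $C$ at $p$, which is that of a smooth branch meeting $L$ to order $d$, a short local calculation—or comparison with a smooth plane curve of degree $d$ with a total inflection, for which $W=H^0(\omega)$ and the semigroup is $\langle d-1,d\rangle$—gives $\dim W_v=\#\{i:b_i>v\}$; equivalently, the set of values $I_p(A,C)$ over all degree $d-3$ forms is $\{b_1-1,\dots,b_{\binom{d-1}{2}}-1\}$. Hence $T\subseteq\{b_1-1,\dots,b_{\binom{d-1}{2}}-1\}$ and $|T|=\dim H^0(\widetilde C,\omega_{\widetilde C})=g$, so it suffices to prove $\max T\le b_g-1$, i.e.\ that no nonzero adjoint lies in $W_{b_g}$. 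Since $\dim W_{b_g}=\#\{i:b_i>b_g\}=\delta$, this says precisely that the $\delta$ nodes of $C$ impose independent conditions on the $\delta$-dimensional linear system $W_{b_g}$.

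I would prove that last assertion by induction on $\delta$; for $\delta=0$ it is vacuous. For the inductive step, degenerate a general $C$ (with $\delta$ nodes, flexed at $p$ along $L$) within the appropriate family to a general $C_0$ with $\delta-1$ nodes, by smoothing one node at a general smooth point $w_0$ of $C_0$; then the germ of $C$ at $p$—hence each $W_v(C)$, which forms a rank-$\delta$ vector bundle over the family—specializes to that of $C_0$, and the first $\delta-1$ nodes of $C$ specialize to the nodes of $C_0$. By the inductive hypothesis, evaluation at the $\delta-1$ nodes of $C_0$ is an isomorphism from $W_{b_{g+1}}(C_0)$ (which has dimension $\delta-1$) onto $k^{\delta-1}$; since $W_{b_{g+1}}(C_0)\subseteq W_{b_g}(C_0)$, evaluation at those nodes is already surjective on the $\delta$-dimensional space $W_{b_g}(C_0)$, hence—by lower semicontinuity of the rank of the corresponding bundle map—surjective on $W_{b_g}(C)$ for $C$ near $C_0$, with one-dimensional kernel spanned by some $A_*$. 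Taking $w_0$ general we may assume $w_0\notin\{A_*=0\}$, so the $\delta$-th node of $C$, which tends to $w_0$, misses $\{A_*=0\}$; therefore evaluation at all $\delta$ nodes of $C$ is injective on $W_{b_g}(C)$, closing the induction.

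The step I expect to demand the most care is precisely this degeneration: that a general degree-$d$ nodal plane curve with $\delta$ nodes and a total inflection at $p$ along $L$ degenerates, within that class, to one with $\delta-1$ nodes by smoothing a node at an arbitrary general point of the limit curve. This is a standard but genuine fact about Severi-type varieties of flexed nodal plane curves, which also underlies the nonemptiness of $X$ and the irreducibility one tacitly uses. One cannot instead claim that the nodes of a general $C\in X$ are general points of $\PP^2$—when $\delta$ is near $\binom{d-1}{2}$ they are highly constrained, as already the nodes of a rational plane curve show—which is why the argument is arranged as an induction. The remaining ingredients—the adjoint identification, the local shape of $W_v$ at a total inflection, and the identity $\{\,i+jd:i,j\ge0,\ i+j\le d-3\,\}=\{\,b-1:b\text{ a gap of }\langle d-1,d\rangle\,\}$—are routine.
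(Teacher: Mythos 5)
The paper does not actually prove Theorem \ref{t_ck}; it is imported verbatim from Coppens--Kato \cite{ck}, so there is no internal proof to compare against and your argument has to stand on its own. Its first half does: passing to the normalization, using that degree-$(d-3)$ adjoints cut the canonical series, computing that along the flexed branch the contact orders of arbitrary degree-$(d-3)$ forms are exactly $\{\,b-1:\ b\ \text{a gap of }\langle d-1,d\rangle\,\}$ with $\dim W_v=\#\{i:\ b_i>v\}$, and thereby reducing the theorem to the assertion that the $\delta$ nodes impose independent conditions on the $\delta$-dimensional space $W_{b_g}$ --- all of this is correct and is a reasonable framework.

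The inductive step, however, rests on a degeneration that cannot exist, and this is a genuine gap rather than the ``standard fact'' you label it as. You posit a family of $\delta$-nodal flexed curves $C$ specializing to a $(\delta-1)$-nodal curve $C_0$, with the extra node of $C$ tending to a general \emph{smooth} point $w_0$ of $C_0$. Singular points cannot vanish in a specialization: the incidence locus $\{(F,q):\ q\in\operatorname{Sing}V(F)\}$ is closed, so if the $\delta$-th node converges to $w_0$, then $C_0$ is singular at $w_0$, contradicting the choice of $w_0$. The true containment runs the other way, $X_{d,\delta}\subset\overline{X_{d,\delta-1}}$ (in your notation, with the number of nodes made explicit): the more nodal curve is the \emph{special} fibre, and one moves into $X_{d,\delta-1}$ by smoothing one of its nodes. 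But if you run the family in that, the only possible, direction, your semicontinuity argument points the wrong way: surjectivity (maximal rank) of the evaluation-at-the-nodes map is an open condition, so it propagates from the special fibre to nearby fibres, not from the nearby $(\delta-1)$-nodal curves down to the $\delta$-nodal limit, and the kernel element $A_*$ you need lives on the wrong member of the family. So the induction does not close as written; one needs a different mechanism --- for instance specializing to still more degenerate curves where the contact condition at $p$ and the position of the nodes can be controlled directly, or a dimension count on the flexed Severi variety --- and supplying such an argument is precisely the content of \cite{ck}, which is why the present paper quotes the result rather than reproving it.
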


\begin{prop}
For $S = N_{d,\delta}$, with $d,\delta$ as in Definition \ref{d_nddelta}, $\msg$ is irreducible of codimension $\ew(S)$ in $\cM_{g,1}$.
\end{prop}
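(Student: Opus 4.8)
The plan is to realize $\overline{\msg}$ as the closure of the image of the variety $X$ of Theorem \ref{t_ck} under the natural ``normalize and mark'' morphism, and then to extract its dimension from that of $X$ together with the dimension of the generic fiber. Since every member of $X$ has exactly $\delta$ nodes and a distinguished smooth point $p\in L$ at which $L$ meets the curve to order $d$, the normalization $\widetilde C$ has genus $g=\binom{d-1}{2}-\delta$ and $p$ lifts to a smooth marked point, so there is a morphism $\beta\colon X\to\cM_{g,1}$ sending $[C]$ to $(\widetilde C,p)$. The variety $X$ is irreducible: it fibers over $L\cong\PP^1$ by the choice of total inflection point, and the fiber over each point is an open subset of a Severi variety of nodal curves in a linear subsystem of $|\cO_{\PP^2}(d)|$ (this is also contained in the analysis of \cite{ck}). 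By Theorem \ref{t_ck}, $\beta$ maps a dense open subset of $X$ into $\msg$, so $\overline{\beta(X)}$ is an irreducible closed subvariety of $\overline{\msg}$.

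Next I would prove the reverse inclusion, i.e.\ that every $(C,p)\in\msg$ lies in $\overline{\beta(X)}$, which then gives $\overline{\msg}=\overline{\beta(X)}$ and hence the irreducibility of $\msg$. As $d-1$ and $d$ lie in $N_{d,\delta}$, choose rational functions $y,x$ on $C$ with pole orders $d-1,d$ at $p$ and regular elsewhere; then $\psi=[\,1:y:x\,]\colon C\to\PP^2$ is a morphism with $\psi(p)=[0:0:1]$. Because $x$ and $y$ have coprime pole orders and a single pole, $\psi$ cannot factor through a covering of degree $>1$, so it is birational onto its image $C''$, a plane curve of degree $d$ with normalization $C$; thus the total $\delta$-invariant of $C''$ equals $\binom{d-1}{2}-g=\delta$. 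A local computation at $p$ (where $1/x$ and $y/x$ vanish to orders $d$ and $1$) shows that $p$ is the unique preimage of $\psi(p)$, that $\psi$ is an immersion there, so $\psi(p)$ is a smooth point of $C''$, and that the line $\{X=0\}$ meets $C''$ only at $\psi(p)$ and to order $d$. After the automorphism of $\PP^2$ carrying $\{X=0\}$ to $L$, this exhibits $C''$ as an irreducible degree-$d$ plane curve of geometric genus $g$ and $\delta$-invariant $\delta$ with the required osculation along $L$; invoking the classical fact that such a curve is a limit of $\delta$-nodal ones puts $[C'']$ in $\overline X$, and $\beta$ carries it to $(C,p)$.

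It then remains to compute $\dim\msg=\dim X-\dim(\text{general fiber of }\beta)$. Inside $|\cO_{\PP^2}(d)|=\PP^N$ with $N=\binom{d+2}{2}-1$, the osculation condition imposes codimension $d-1$ (the divisor $C\cap L$ must be a $d$-fold point in $|\cO_L(d)|=\PP^d$) and the presence of $\delta$ nodes imposes codimension $\delta$ (the Severi count), giving $\dim X=\binom{d+1}{2}+1-\delta$. The fiber of $\beta$ over $(\widetilde C,p)$ consists of the data needed to recover a plane model: a basepoint-free $3$-dimensional subspace $W\subseteq H^0(\widetilde C,\cO(dp))$ containing the (scalar-unique) section with divisor $dp$, together with an identification $\PP(W^\vee)\cong\PP^2$ taking the corresponding line to $L$; this set has dimension $2\big(h^0(\cO(dp))-3\big)+6=2\,h^0(\cO_{\widetilde C}(dp))=2\,\#\big(N_{d,\delta}\cap[0,d]\big)$. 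Hence $\dim\msg=\binom{d+1}{2}+1-\delta-2\,\#(N_{d,\delta}\cap[0,d])$, and a direct combinatorial computation identifies this with $\dim\cM_{g,1}-\ew(N_{d,\delta})$; Theorem \ref{t_ewbound} provides an independent check of the inequality $\geq$.

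The step I expect to be the main obstacle is the reverse inclusion: while building the plane model $\psi$ from $(C,p)$ is elementary, showing that $C''$ — which a priori may acquire singularities worse than nodes — nevertheless lies in $\overline X$ requires the (classical but nontrivial) degeneration result that an irreducible plane curve of given degree and geometric genus is a specialization of nodal curves of the same degree and genus, here compatibly with the fixed osculating line. A secondary point requiring care is the fiber-dimension bookkeeping, where one must verify that the section cutting out $dp$ is unique up to scalars and that the remaining freedom is exactly a coset of the stabilizer of a line in $\mathrm{PGL}_3$; combined with the combinatorial identity this is what pins the codimension to $\ew(N_{d,\delta})$ rather than merely bounding it.
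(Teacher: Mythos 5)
Your skeleton is the same as the paper's: exploit the Coppens--Kato variety $X$ of Theorem \ref{t_ck}, map it to $\cM_{g,1}$ by normalizing and marking the total inflection point, and recover $\dim\msg$ as $\dim X$ minus a fiber dimension (your fiber count $2\,h^0(\cO_C(dp))$ agrees with the paper's $6$ in the relevant range, where $h^0(\cO_C(dp))=3$, and your values of $\dim X$ and of $\ew(S)$ match the paper's). The gaps are in how you justify the three nontrivial inputs. First, irreducibility of $X$: fibering over $L\cong\PP^1$ only reduces the question to irreducibility of the fibers, i.e.\ of the locus of $\delta$-nodal curves inside the linear subsystem of curves with $C\cap L=d\cdot q$ for fixed $q$; this is a Severi-type irreducibility statement with an imposed total tangency, which is not the classical Severi irreducibility theorem and is not contained in Theorem \ref{t_ck} (a statement only about the general member of $X$). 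The paper invokes Ran's Irreducibility Theorem (bis) \cite{ran} exactly for this. Second, your computation $\dim X=\binom{d+1}{2}+1-\delta$ is an expected-dimension count; what the proposition actually needs is the upper bound on $\dim X$ (equivalently $\codim\msg\geq\ew(S)$, since Theorem \ref{t_ewbound} already gives the other inequality), and that requires knowing the tangency and nodal conditions cut down the dimension by exactly $d-1+\delta$; in the paper this is \cite[Lemma 2.4]{harrisSeveri}, not a formal count.

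Third, the reverse inclusion. The paper simply asserts that the map from the open subset $U\subset X$ to $\msg$ is surjective with irreducible $6$-dimensional fibers; you instead aim for $\msg\subseteq\overline{\beta(X)}$, and you correctly identify, but do not supply, the needed degeneration statement: that an irreducible degree-$d$ plane curve of geometric genus $g$ with total inflection along $L$, possibly with non-nodal singularities, is a limit of curves in $X$ with the tangency preserved. Moreover, even granting $[C'']\in\overline{X}$, concluding $(C,p)\in\overline{\beta(X)}$ requires an additional argument: $\beta$ is only defined on $X$, so you need a one-parameter family in $X$ specializing to $C''$ whose marked normalizations specialize to $(C,p)$ (e.g.\ via simultaneous normalization of equigeneric families); this step is absent. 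Each of these three gaps can be closed (by Ran, by Harris, and by an equigeneric-degeneration argument, respectively), but none is closed by what you wrote, and together they carry essentially all the content of the proposition beyond bookkeeping.
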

\begin{proof}
The genus of $S$ is $g = \binom{d-1}{2} - \delta$ by definition, and its only generators that are below any gaps are $d-1$ and $d$, which lie below all gaps of $S$ except $1,2,\cdots,d-2$. Therefore $$\ew(S) = 2g-2d+4.$$
Let $X$ be the variety in the statement of Theorem \ref{t_ck}. It has a dense open subset $U$ consisting of curves $C$ such that the normalization of $(C,p)$ lies in $\msg$, and the induced map $U \rightarrow \msg$ has irreducible fibers of dimension $6$, since there is a $6$-dimensional space of automorphisms of $\PP^2$ fixing a line. The map is dominant since any $(C,p)$ with Weierstrass semigroup $S$ may be given a morphism to $\PP^2$ using two rational functions of pole orders $d-1,d$ at $p$; the image curve will be smooth at the image of $p$, and the image of $p$ will be a total inflection point since the divisor $d \cdot p$ must be the pullback of some hyperplane section. Therefore $\dim X = \dim \msg + 6$. It suffices to show that $X$ is irreducible of dimension $g + 2d$. The dimension of $X$ is equal to $g+2d$ by \cite[Lemma 2.4]{harrisSeveri}, and the irreducibility of $X$ follows from \cite[Irreducibility Theorem (bis)]{ran}.
\end{proof}

\subsection{A case where $\codim \msg \neq \ew(S)$} \label{ss_notew}

The smallest genus in which we are aware of a semigroup $S$ for which $\codim \msg \neq \ew(S)$ is $g=9$.

The example is 
\begin{eqnarray*}
S &=& \langle 6,7,8 \rangle\\
&=& \NN \backslash \{1,2,3,4,5,9,10,11,17\}.
\end{eqnarray*}

For this semigroup, $\ew(S) = 12$, but we claim that $\codim \msg =11$. We will sketch a proof of this fact, omitting the full details. In \cite{pfl}, we describe $\msg$ for all semigroups of the form $\langle d-r+1,d-r+2,\cdots,d\rangle$ in complete detail. These semigroups furnish a large collection of cases where $\codim \msg < \ew(S)$.

If $(C,p) \in \msg$, then one can show that the complete linear series $|8p|$ embeds $C$ in $\PP^3$ as the complete intersection of a quadric $Q$ and a quartic $R$, and in this embedding the osculating plane $H$ at $p$ meets $C$ at $p$ only. Hence we can study $\msg$ via the variety of triples $(C,H,p)$ of a smooth complete intersections $C$ of a quadric and quartic, a hyperplane $H$, and a point $p$ such that $C$ and $H$ meet at $p$ only. One can calculate that the dimension of this variety is $29$, and verify that for a general point of this variety, $(C,p)$ does indeed have Weierstrass semigroup $S$. Since a point $(C,p) \in \msg$ determines the triple $(C,H,p)$ up to automorphisms of $\PP^3$, this shows that $\dim \msg = 29 - \dim \textrm{Aut} \PP^3 = 14$, hence $\codim \msg = 25 - 14 = 11$.


\section{Dimensionally proper linear series} \label{s_dp}

This section collects several key facts and definitions about families of linear series on marked algebraic curves, including a ``regeneration lemma'' from the theory of limit linear series. The regeneration lemma is the basic inductive tool in the proof of Theorem \ref{t_ewexist}.

Our discussion will be brief, and a number of proofs and precise definitions are omitted where they are not necessary for the application in this paper. A complete discussion of these matters can be found in \cite[chapter 4]{ossBook}; other useful references are \cite[chapter IV]{acgh}, \cite[chapter 5]{hm} and \cite[chapter XXI]{gac2}.

\subsection{Varieties of linear series with specified ramification}

\begin{defn}
Let $C$ be a smooth curve. A \emph{linear series of rank $r$ and degree $d$} on $C$, or ``a $g^r_d$,'' is a pair $(L,V)$ consisting of a degree $d$ line bundle on $C$ and an $(r+1)$-dimensional vector space $V$ of global sections of $L$. We will sometimes refer to the linear series simply as $V$.

Let $p$ be a point of $C$. The \emph{vanishing sequence} $a_0^V(p), \cdots, a_r^V(p)$ of $V$ consists of the $r+1$ distinct orders of vanshing of elements $s \in V$ at the point $p$, in (strictly) increasing order.
\end{defn}

We will often use the phrase \emph{vanishing sequence} to refer to a set of $r+1$ nonnegative integers between $0$ and $d$ inclusive (when the values of $r,d$ are clear from context). Vanishing sequences will be denoted by capital roman letters, while the individual elements of a vanishing sequence will be denoted by the corresponding lowercase letter, with a subscript. For example, the elements of a vanishing sequence $A$ will be denoted $a_0,a_1,\cdots, a_r$, in increasing order.

In the following two definitions, we describe the set of closed points of a scheme without specifying the scheme structure. We hope the reader will forgive this, as the scheme structure is not relevant to our application. Full details, including the functors that these schemes represent, can be found in \cite[Section 4.1]{ossBook}. Although we only need the following two definitions in the cases $n=1$ and $n=2$, we state them in fuller generality.

\begin{defn}
Let $C$ be a smooth curve, $p_1,\cdots,p_n$ be distinct points of $C$, and $A^1,\cdots,A^n$ be vanishing sequences. Denote by $$G^r_d(C;\ (p_1,A^1), \cdots, (p_n,A^n))$$ a scheme whose closed points correspond to the $g^r_d$s $(L,V)$ on $C$ such that for $i=1,\cdots,n$ and $j=0,\cdots,r$, the inequality $a^V_j(p_i) \geq a^i_j$ holds (recall that we write $a^i_j$ to denote the $j$th element of the set $A^i$). Denote by $$\widetilde{G}^r_d(C;\ (p_1,A^1), \cdots, (p_n,A^n) )$$ the open subscheme where equality $a^V_j(p_i) = a^i_j$ holds for all $i,j$.
\end{defn}

\begin{rem}
In this definition and those that follow, our notation differs slightly from that of, for example, \cite{ossBook}. In particular, we specify the \emph{vanishing} sequence at each marked point, whereas most authors specify the \emph{ramification} sequence, defined by $\alpha_i(p) = a_i(p)-i$. We have chosen to work exclusively with vanishing orders, as it significantly reduces clutter in several parts of the present paper.
\end{rem}

\begin{defn}
Let $\cC \rightarrow B$ be a smooth, proper family of curves and $s_1,\cdots,s_n$ be disjoint sections. In case $n=0$, assume that the family has at least one section. Denote by $$G^r_d(\cC/B;\ (s_1,A^1),\cdots,(s_n,A^n) ) \rightarrow B$$ a scheme whose fiber over $b \in B$ is $G^r_d(\cC_b;\ (s_1(b),A^1),\cdots,(s_n(b),A^n) )$.

Denote by $\cG^r_{g,d}(A^1,\cdots,A^n) \rightarrow \cM_{g,n}$ the scheme formed by gluing these schemes together (or, more precisely, gluing these schemes together over a versal family of $n$-marked curves, and then taking the quotient by a finite group action).

The notation $\widetilde{G}^r_d$ or $\widetilde{\cG}^r_{g,d}$ will refer to the open subscheme where the vanishing sequences match the prescribed sequences exactly.
\end{defn}

Here, $\cM_{g,n}$ denotes the coarse moduli space of smooth curves with $n$ distinct marked points. We omit the details of the gluing process; it suffices for our purposes that a scheme $\cG^r_{g,d}(A^1,\cdots,A^n)$ exists, whose fibers over $\cM_{g,n}$ are isomorphic to the varieties $G^r_d(C;\ (p_1,A^1), \cdots, (p_n,A^n) )$.

\subsection{Dimensionally proper points}

\begin{defn} \label{d_rho}
For integers $g,r,d$ and vanishing sequences $A^1,\cdots,A^n$, define $$\rho_g(r,d;\ A^1,\cdots,A^n) = (r+1)(d-r) - rg - \sum_{i=1}^n \sum_{j=0}^r (a^i_j-j).$$
When $g,r,d,A^1,\cdots,A^n$ are clear from context, we will denote this number simply by $\rho$.
\end{defn}

\begin{lemma} \label{l_bndimbound}
If $G^r_d(\cC/B;\ (s_1,A^1),\cdots,(s_n,A^n))$ is nonempty, its local dimension at any point is greater than or equal to $\dim B + \rho$.
\end{lemma}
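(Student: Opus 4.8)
The plan is to exhibit $G := G^r_d(\cC/B;\ (s_1,A^1),\cdots,(s_n,A^n))$, locally on $B$, as the zero locus of a section of a vector bundle on a smooth ambient scheme whose relative dimension over $B$ exceeds $\rho$ by exactly the rank of that bundle; the bound then drops out of Krull's height theorem, which guarantees that the vanishing locus of $e$ local equations on a scheme has, at every point, local dimension at least $e$ less than the ambient dimension. This is the familiar determinantal construction of varieties of linear series with prescribed ramification; I would simply recall it in the relative setting, following \cite[Section 4.1]{ossBook}, \cite{eh86}, and \cite[Chapters IV and XXI]{acgh,gac2}.

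Concretely, I would proceed as follows. Work locally on $B$ (passing to an \'etale neighborhood if necessary). Choose an effective relative divisor $D$, a sum of sections disjoint from the $s_i$, of degree $m \gg 0$, so that $R^1\pi_*(\mathcal{L}(D)) = 0$ for the Poincar\'e bundle $\mathcal{L}$ on $\Pic^d(\cC/B)\times_B \cC$. Then $\mathcal{F} := \pi_*(\mathcal{L}(D))$ is a vector bundle of rank $N := d+m-g+1$, and on the Grassmann bundle $\mathbb{G} := \mathbb{G}(r+1,\mathcal{F}) \to \Pic^d(\cC/B)$, with tautological subbundle $\mathcal{S}$, the scheme $G$ is cut out by two kinds of conditions: the condition that $\mathcal{S}$ land inside $\ker\big(H^0(L(D)) \to H^0(L(D)|_D)\big) = H^0(L)$, which is the vanishing of a section of $\mathrm{Hom}\big(\mathcal{S}, \pi_*(\mathcal{L}(D)|_D)\big)$, a bundle of rank $(r+1)m$; and, at each $s_i$, the condition $a^V_j(s_i) \geq a^i_j$ for all $j$, which is a Schubert-type degeneracy condition on the composites of $\mathcal{S} \hookrightarrow \mathcal{F}$ with the truncation maps at $s_i$, locally cut out by $\sum_{j=0}^r (a^i_j - j)$ equations. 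Altogether $G$ is, locally, the zero locus of a section of a bundle of total rank $e = (r+1)m + \sum_{i=1}^n\sum_{j=0}^r(a^i_j-j)$ on $\mathbb{G}$.

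It then remains to compute. Since $\Pic^d(\cC/B) \to B$ has relative dimension $g$ and the Grassmannian fibers of $\mathbb{G}$ have dimension $(r+1)(N-r-1) = (r+1)(d+m-g-r)$, we get $\dim \mathbb{G} = \dim B + g + (r+1)(d+m-g-r)$, and hence at any point $x \in G$, Krull's theorem gives
\begin{align*}
\dim_x G \;\geq\; \dim\mathbb{G} - e &= \dim B + g + (r+1)(d+m-g-r) - (r+1)m - \sum_{i,j}(a^i_j-j)\\
&= \dim B + \rho,
\end{align*}
the final equality being the identity $g + (r+1)(d-g-r) = (r+1)(d-r) - rg$.

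I expect the only delicate points to be in the setup rather than the conclusion. First, because $h^0$ of a degree-$d$ line bundle jumps, one cannot work with a naive Grassmann bundle of the spaces $H^0(L)$; this is exactly what forces the auxiliary twist by $D$ and the passage to $\mathcal{F} = \pi_*(\mathcal{L}(D))$, and one must check that these choices glue over $B$ and that the resulting scheme does not depend on $D$. Second, one must verify that the ramification condition at each $s_i$ genuinely has Schubert codimension $\sum_j(a^i_j - j)$ and is locally cut out by exactly that many equations, so that the total rank $e$ is correct and the arithmetic lands on $\rho$ precisely; this bookkeeping is where essentially all the care goes. Once the ambient Grassmann bundle and the defining section are in place with the stated ranks, the dimension estimate is the one-line application of Krull's height theorem above.
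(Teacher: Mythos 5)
Your construction is essentially the one the paper has in mind (following \cite[Theorem 4.1.3]{ossBook}): a Grassmann bundle over $\Pic^d(\cC/B)$ of $(r+1)$-planes in $\pi_*(\mathcal{L}(D))$ for an auxiliary twist by $D$, with $G^r_d(\cC/B)$ cut out by the vanishing of the composite $\mathcal{S}\to\pi_*(\mathcal{L}(D)|_D)$, and the ramification conditions imposed afterwards; your final arithmetic is also correct. The one genuine flaw is the assertion that the condition $a^V_j(s_i)\geq a^i_j$ for all $j$ is \emph{locally cut out by $\sum_j(a^i_j-j)$ equations}, so that it can be folded into the rank of a single bundle whose section cuts out $G$ and Krull applied in one stroke. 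That condition is a Schubert, i.e.\ determinantal, condition on the composite of $\mathcal{S}\hookrightarrow\mathcal{F}$ with evaluation along a multiple of $s_i$; Schubert varieties are in general not local complete intersections, and there is no justification for the claim that they are locally defined by codimension-many equations. As written, the total number of equations $e$ that you feed into Krull's theorem is therefore not established.

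The bound survives, and the repair is exactly what the paper's own sketch invokes: the locus where the $i$-th ramification condition holds is the preimage, under a morphism from the ambient Grassmann bundle (locally on the base, after trivializing) to a smooth Grassmannian, of a Schubert variety of codimension $\sum_j(a^i_j-j)$; and for any morphism $f\colon X\to Y$ with $Y$ smooth and any closed subvariety $Z\subseteq Y$ of codimension $c$, every component of $f^{-1}(Z)$ has codimension at most $c$ in $X$ (identify $f^{-1}(Z)$ with $(X\times Z)\cap\Gamma_f$, note that the graph $\Gamma_f$ is locally cut out by $\dim Y$ equations because $Y$ is smooth, and apply Krull). So apply Krull honestly only to the genuinely bundle-theoretic part, the rank-$(r+1)m$ section cutting out $G^r_d(\cC/B)$ inside $\mathbb{G}$, and then intersect with the $n$ pulled-back Schubert varieties using the statement above; this costs at most $\sum_{i,j}(a^i_j-j)$ additional dimensions, and your computation $\dim B+g+(r+1)(d+m-g-r)-(r+1)m-\sum_{i,j}(a^i_j-j)=\dim B+\rho$ goes through unchanged. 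A smaller point worth making explicit: imposing the vanishing orders at $s_i$ through $\mathcal{F}=\pi_*(\mathcal{L}(D))$ is legitimate only because $D$ is chosen disjoint from the sections $s_i$, so that vanishing orders at $s_i$ of sections of $L$ and of $L(D)$ coincide.
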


\begin{proof}
See, for example, \cite[Theorem 4.1.3]{ossBook} for full details; what follows is a brief summary. First, describe $G^r_d(\cC/B)$ (where we must assume that $\cC \rightarrow B$ has a section) as a degeneracy locus of a map of vector bundles over the relative Picard scheme $\Pic^d(\cC/B)$, and bound its dimension with this description. Note that the assumption that $\cC \rightarrow B$ has a section is needed to construct the relative Picard scheme $\Pic^d(\cC/B)$. Then impose the vanishing conditions by intersecting the pullback of $n$ Schubert cells under $n$ maps of Grassmannian bundles; this imposes at most a number of conditions equal to the double summation in the formula for $\rho$.
\end{proof}

\begin{defn} \label{d_dp}
A linear series $(L,V) \in G^r_d(C;\ (p_1,A^1),\cdots,(p_n,A^n))$ is called \emph{dimensionally proper} (with respect to the choice of $A^1,\cdots,A^n$) if there exists a deformation $(\cC/B,s_1,\cdots,s_n)$ of $(C,p_1,\cdots,p_n)$ such that $$\dim G^r_d(\cC/B;\ (s_1,A^1),\cdots,(s_n,A^n) ) = \dim B + \rho,$$ locally at $(L,V)$.

Equivalently, $(L,V)$ is dimensionally proper if the local dimension of $\cG^r_{g,d}(A^1,\cdots,A^n)$ at $(L,V)$ is equal to $3g + n -3 + \rho$.
\end{defn}

\subsection{Regeneration}

We will reduce the proof of Theorem \ref{t_ewexist} to the existence of dimensionally proper points of a suitable variety of linear series. The existence results will come from an induction on genus, made possible by the following ``regeneration lemma.'' 

\begin{lemma} \label{l_reg}
Fix positive integers $g_1,g_2,d,r$ and two vanishing sequences $A,A'$. Denote by $d-A$ the vanishing sequence $\{d-a_r,d-a_{r-1},\cdots,d-a_0\}$.

If $\widetilde{\cG}^r_{g_1,d}(A)$ and $\widetilde{\cG}^r_{g_2,d}(d-A,A')$ both have dimensionally proper points, then $\widetilde{\cG}^r_{g_1+g_2,d}(A')$ also has dimensionally proper points.
\end{lemma}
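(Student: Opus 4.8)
The plan is to build a nodal curve $X_0 = C_1 \cup_p C_2$, where $C_1$ has genus $g_1$, $C_2$ has genus $g_2$, and the two curves are glued at a single node identifying the marked point of $C_1$ with a point of $C_2$ that I will also call $p$; on $C_2$ I retain an additional marked point $q$ carrying the sequence $A'$. The genus of $X_0$ is $g_1 + g_2$. The key input is the theory of limit linear series: a refined limit $g^r_d$ on $X_0$ with vanishing sequence $A$ at the node $p$ as seen from $C_1$ forces the complementary vanishing sequence $d-A$ at $p$ as seen from $C_2$ (this is the compatibility condition built into the definition of a refined limit linear series, and it is exactly why the sequence $d-A$ appears in the statement). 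So a point of $\widetilde{G}^r_d(C_1; (p,A))$ together with a point of $\widetilde{G}^r_d(C_2; (p, d-A), (q, A'))$ glues to a refined limit $g^r_d$ on $X_0$ with vanishing $A'$ at $q$, and conversely. The hypotheses give us dimensionally proper such points on each side.

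Next I would set up the family. Choose a one-parameter (or higher-dimensional) smoothing $\cC \to B$ of $X_0$ in $\overline{\cM}_{g_1+g_2,1}$, with a section $s$ through the point $q$, such that the special fiber is $X_0$ and the generic fiber is smooth. The moduli space of limit linear series on the degenerate fiber, $G^r_d(X_0; (q,A'))$, is cut out inside the product $G^r_d(C_1;(p,A)) \times G^r_d(C_2;(p,d-A),(q,A'))$ — I would use the additive compatibility at the node. A dimension count at the chosen pair of dimensionally proper points: the expected dimension of $G^r_d(C_1;(p,A))$ contributes $\rho_{g_1}(r,d;A)$, that of $G^r_d(C_2;(p,d-A),(q,A'))$ contributes $\rho_{g_2}(r,d;d-A,A')$, and the node-compatibility imposes the expected $r+1$ conditions; using $\sum_j (d-a_{r-j}-j) = (r+1)(d-r) - \sum_j(a_j-j)$ one checks that these pieces add up to exactly $\rho_{g_1+g_2}(r,d;A')$. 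So the product point is a dimensionally proper point of the space of limit linear series on $X_0$.

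The final and most delicate step is the regeneration itself: to deduce that the \emph{smooth} generic fiber carries a dimensionally proper point of $\widetilde{\cG}^r_{g_1+g_2,d}(A')$. Here I would invoke the standard semicontinuity/smoothing argument from limit linear series theory (the "regeneration theorem" of Eisenbud–Harris, in the characteristic-free formulation of Osserman): the total space $G^r_d(\cC/B; (s,A'))$ has every component of dimension at least $\dim B + \rho$ by Lemma \ref{l_bndimbound}, while the fiber over the special point $b_0 \in B$ has local dimension exactly $\rho$ at the chosen point by the previous paragraph; therefore that component of the total space dominates $B$, and its generic fiber — a variety of linear series on a \emph{smooth} curve — has local dimension exactly $\rho$ near the limit of our point. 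One must also check that the limit linear series actually smooths to a series with the \emph{exact} vanishing sequence $A'$ at $q$ (not merely $\geq A'$); this is where working in the $\widetilde{G}$ locus matters, and it follows because the exact-vanishing locus is open and the dimension count already pins us there. The main obstacle is precisely this last smoothing/transversality step: one must ensure the degenerate point is a \emph{smoothable} limit linear series and that no excess-dimensional component of $G^r_d(X_0;(q,A'))$ through our point obstructs the regeneration — which is guaranteed exactly by the dimensional properness hypothesis, via the inequality of Lemma \ref{l_bndimbound} applied on both the degenerate and the total family.
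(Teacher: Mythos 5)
Your overall architecture matches the paper's: glue the two dimensionally proper series into a refined limit $g^r_d$ on the two-component curve, use additivity of $\rho$, and regenerate via the characteristic-free limit linear series machinery together with the lower bound of Lemma \ref{l_bndimbound}. But there is a genuine gap at the crucial dimension count on the degenerate side. You assert that the glued point is a ``dimensionally proper point of the space of limit linear series on $X_0$'' and then use that ``the fiber over the special point $b_0\in B$ has local dimension exactly $\rho$.'' Dimensional properness (Definition \ref{d_dp}) is a statement about the total space of the linear series scheme over a suitable \emph{deformation} of the marked curve, not about the fiber over the fixed curve: the hypotheses do not rule out that $\widetilde{G}^r_d(C_1;(p_1,A))$ or $\widetilde{G}^r_d(C_2;(p_2,d-A),(q,A'))$ — hence the special fiber $G^{r,\rf}_d(X_0;(q,A'))$ — has excess dimension at your chosen point. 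With a one-parameter smoothing (your default choice) the argument then collapses: if the special fiber has local dimension $\geq 1+\rho$ at the point, the component of $G^{r,\rf}_d(\cC/B;(s,A'))$ through it may lie entirely over $b_0$, and nothing smooths; your parenthetical ``(or higher-dimensional)'' does not repair this, because you never use the base in a way that lets the dimensional properness hypothesis act.

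The paper's proof fixes exactly this point: it takes a \emph{versal} deformation $(\cX/B,s)$ of the marked nodal curve $(X,q)$ (made into a smoothing family after an \'etale base change), lets $\Delta\subset B$ be the codimension-one locus of singular curves, and computes the local dimension of the \emph{preimage of all of $\Delta$}, not of a single fiber. Over $\Delta$ the refined limit linear series space fibers, near the point, into the two aspect spaces varying over deformations of $(C_1,p_1)$ and $(C_2,p_2,q)$, so dimensional properness of the two aspects gives local dimension exactly $\dim\Delta+\rho_{g_1}(r,d;A)+\rho_{g_2}(r,d;d-A,A')=\dim\Delta+\rho_{g_1+g_2}(r,d;A')$. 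Comparing with the everywhere-valid lower bound $\dim B+\rho_{g_1+g_2}(r,d;A')$ for the total space forces the component through the point to have exactly that dimension and not to be contained in the preimage of $\Delta$; restricting to the complement of $\Delta$ then yields dimensionally proper points on smooth curves (exactness of the vanishing sequence $A'$ being automatic since one works in the $\widetilde{G}^{r,\rf}$ locus, as you correctly anticipated). Minor additional quibble: the node-matching does not impose ``$r+1$ extra conditions'' on top of the two $\rho$'s — with the vanishing conditions already included, $\rho_{g_1}(r,d;A)+\rho_{g_2}(r,d;d-A,A')=\rho_{g_1+g_2}(r,d;A')$ on the nose — though your final arithmetic conclusion is the correct one.
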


This lemma is a standard application of the theory of \emph{limit linear series}, pioneered by Eisenbud and Harris \cite{eh86}. It is essentially a special case of the ``smoothing theorem'' \cite[Theorem 3.4]{eh86}, which is referred to as the ``regeneration theorem'' in the expository account \cite[Theorem 5.41]{hm}. Both of these sources work over the complex numbers and work locally in the complex-analytic setting. We give a proof of Lemma \ref{l_reg} below based on the more recent \cite{oss06}, which is therefore valid in characteristic $p$. The theory of limit linear series has subsequently been expanded (for example, to include curves not of compact type) in various ways (e.g. \cite{oss2014,oss2014b,aminibaker}), but for our purposes the theory developed in \cite{oss06} is sufficient.

Limit linear series, as their name suggests, provide a way to construct, from a family of smooth algebraic curves degenerating to a nodal curve and a family of linear series on the smooth curves, an object over the nodal curve that serves as a well-defined limit of the the linear series on smooth curves. For the purpose of Lemma \ref{l_reg}, we need only consider particularly simple nodal curves.

\begin{sit} \label{sit_twocomp}
Fix positive integers $g_1,g_2$. Let $C_1,C_2$ be smooth curves of genus $g_1,g_2$ respectively, let $p_1$ be a point of $C_1$ and let $p_2,q$ be distinct points of $C_2$. Denote by $X$ the nodal curve obtained by gluing $p_1$ to $p_2$, and denote the attachment point by $p \in X$. See Figure \ref{fig_twocomp}.
\begin{figure}
\begin{tikzpicture}
\draw plot[smooth, tension=0.8] coordinates {(-5,1) (-4,0) (-2,-0.4) (0,0) (1,1)};
\draw plot[smooth, tension=0.8] coordinates {(-1,1) (0,0) (2,-0.4) (4,0) (5,1)};
\draw (-2,-0.5) node[below] {$C_1$};
\draw (2,-0.5) node[below] {$C_2$};
\draw[fill] (0,0) circle(2pt) node[below] {$p$};
\draw[fill] (4,0) circle(2pt) node[below] {$q$};
\end{tikzpicture}
\caption{The nodal curve $X$ of Situation \ref{sit_twocomp}.} \label{fig_twocomp}
\end{figure}
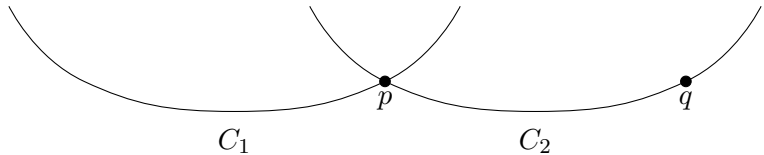
\end{sit}

We will only require a specific type of limit linear series, namely \emph{refined} series. In general, the refined series form an open subset of all limit linear series. We do not require non-refined series (called \emph{coarse} series in the Eisenbud and Harris theory) for our application, so we will not discuss them. 

\begin{defn}
In Situation \ref{sit_twocomp}, a \emph{refined limit linear series} of rank $r$ and degree $d$ on $X$ (or a \emph{limit $g^r_d$ on $X$}), is a pair $((L_1,V_1),(L_2,V_2))$ of $g^r_d$s on $C_1$ and $C_2$ respectively, such that for $i=0,1,\cdots,r$, 
\begin{equation} \label{eq_compat}
a^{V_1}_i(p_1) = d-a^{V_2}_{r-i}(p_2).
\end{equation} 
Equation \ref{eq_compat} is called the compatibility condition. The linear series $(L_i,V_i)$ is called the \emph{$C_i$-aspect} of the limit linear series.
\end{defn}

Another way to view a refined limit linear series on $X$ is that it consists of a choice of vanishing sequence $A$ (with respect to the data $r,d$) and a point in $$\widetilde{G}^r_d(C_1;\ (p_1,A)) \times \widetilde{G}^r_d(C_2;\ (p_2,d-A)).$$
Therefore a natural way to define a scheme structure for the set of refined limit linear series is as follows.
\begin{defn} \label{d_llsspaceX}
In Situation \ref{sit_twocomp}, the scheme of refined limit $g^r_d$s on $X$ is
$$
G^{r,\rf}_d(X) = \bigcup_{A} \widetilde{G}^r_d(C_1;\ (p_1,A)) \times \widetilde{G}^r_d(C_2;\ (p_2,d-A)),
$$
where the union is taken within the scheme $G^r_d(C_1) \times G^r_d(C_2)$.
\end{defn}

Definition \ref{d_llsspaceX} extends in an obvious way to families $\cX \rightarrow B$ of two-component curves. What is less obvious is that it can also be extended to certain families of curves in which some members are smooth and some are singular. For our purposes, we require the following facts.

\begin{enumerate}
\item There is a special type of family $\cX \rightarrow B$ of nodal curves, called a \emph{smoothing family} \cite[Definition 3.1]{oss06}. For every flat, proper family $\cX \rightarrow B$ of genus $g$ curves, all either smooth curves or two-component curves with one node, with $\cX$ regular and $B$ regular and connected, and every choice of point $b \in B$, there is an \'etale neighborhood $B' \rightarrow B$ of $b$ such that the fiber product $\cX' \rightarrow B'$ is a smoothing family \cite[Lemma 3.3]{oss06}.
\item If $\cX \rightarrow B$ is a smoothing family of curves, all either smooth or two-component, there is a scheme $G^{r,\rf}_d(\cX/B) \rightarrow B$, whose fiber over any $b \in B$ is either $G^r_d(\cX_b)$ (if $\cX_b$ is smooth) or $G^{r,\rf}_d(\cX_b)$ (if $\cX_b$ is a two-component curve) \cite[Proposition 6.6]{oss06}.
\item For such a smoothing family, the dimension bound $$\dim G^{r,\rf}_d(\cX/B) \geq \dim B + \rho_g(r,d)$$ holds locally at every point \cite[Theorem 5.3]{oss06}.
\item With a family $\cX \rightarrow B$ as above, given a section $s$ whose image lies in the smooth locus of every fiber, and a vanishing sequence $A$, there also exists a scheme $$\widetilde{G}^{r,\rf}_d(\cX/B;\ (s,A)) \rightarrow B,$$ whose fiber over a point $b \in B$ such that $\cX_b$ is smooth is isomorphic to $\widetilde{G}^r_d(\cX_b;\ (s(b),A))$, and whose fiber over a point $b \in B$ such that $\cX_b$ is singular consists (set-theoretically) of those refined limit linear series such that the aspect of the component on which $s(b)$ lies has vanishing sequence equal to $A$ at $s(b)$ \cite[Corollary 6.10]{oss06}.
\item In the previous situation, the dimension bound $$\dim G^{r,\rf}_d(\cX/B;\ (s,A)) \geq \dim B + \rho_g(r,d;\ A)$$ holds locally at every point \cite[Theorem 4.4.10]{ossBook}.
\end{enumerate}

With this machinery in place, we can prove the regeneration lemma.

\begin{proof}[Proof of Lemma \ref{l_reg}]
Suppose that there are two dimensionally proper linear series
\begin{eqnarray*}
(L_1,V_1) &\in& \widetilde{G}^r_d(C_1;\ (p_1,A))\\
(L_2,V_2) &\in& \widetilde{G}^r_d(C_2;\ (p_2,d-A), (q,A')),\\
\end{eqnarray*}
where $C_1,C_2$ are smooth curves of genus $g_1$ and $g_2$, respectively. Form from $(C_1,p_1)$ and $(C_2,p_2,q)$ a nodal two-component marked curve $(X,q)$ as in Situation \ref{sit_twocomp}. Let $(\cX/B,s)$ be a versal deformation of $(X,q)$, and let $\Delta \subset B$ denote the locus of singular curves, which is of codimension $1$ in $B$. We may assume (perhaps after taking a base change to an \'etale neighborhood) that $\cX /B$ is a smoothing family, and hence form the scheme $\widetilde{G}^{r,\rf}_d(\cX/B;\ (s,A'))$ of refined limit linear series. The two linear series $(L_1,V_1)$ and $(L_2,V_2)$ constitute the aspects of a refined limit linear series on $X$. Since both of these aspects are dimensionally proper, the local dimension, at this point, of the preimage of $\Delta$ in $\widetilde{G}^{r,\rf}_d(\cX/B;\ (s,A'))$ must be equal to exactly $\dim \Delta + \rho_{g_1}(r,d;\ A) + \rho_{g_2}(r,d;\ d-A,A')$. A bit of algebra shows that this is equal to $\dim \Delta + \rho_g(r,d;\ A')$ (this bit of algebra is sometimes referred to as ``the additivity of the Brill-Noether number,'' e.g. in \cite[Lemma 3.6]{eh86}). On the other hand, the local dimension, at this same point, of the entire space $\widetilde{G}^{r,\rf}_d(\cX/B;\ (s,A'))$ is \emph{at least} $\dim B + \rho_g(r,d;\ A')$; since $\Delta$ has codimension one, it follows that the local dimension of $\widetilde{G}^{r,\rf}_d(\cX/B;\ (s,A'))$ is in fact exactly equal to $\dim B + \rho_g(r,d;\ A')$, and that no irreducible component containing this point lies entirely over $\Delta$. Taking any irreducible component and restricting it to the complement of $\Delta$ in $B$, we obtain a dimensionally proper family of $g^r_d$s on \emph{smooth} marked curves of genus $g$, with imposed vanishing sequence $A'$ at the marked point. Hence $\widetilde{\cG}^r_{g,d}(A')$ has dimensionally proper points.
\end{proof}

\section{The effective weight bound} \label{s_bound}

We will prove Theorem \ref{t_ewbound} in this section. The proof comes from the dimension bound of Lemma \ref{l_bndimbound}, applied to carefully chosen vanishing data at the marked point. Our main point of departure from previous work on this subject (e.g. \cite{eh87,bullock}) is that we consider \emph{incomplete} linear series (that is, $(L,V)$ where $V$ is a strict subspace of the space of global sections of $L$), which nonetheless determine the Weierstrass semigroup. This innovation allows the weight bound to be improved to the effective weight bound.

\begin{defn} \label{d_effsubseq}
Let $S \subset \NN$ be a numerical semigroup of genus $g$. An \emph{effective subsequence for $S$} is a finite subset $T \subset S$ such that
\begin{enumerate}
\item $T$ contains $0$,
\item $T$ contains all generators of $S$, and
\item $T$ does not contain any composite elements of $S$ that are less than the largest gap of $S$.
\end{enumerate}
\end{defn}

In the statement below and elsewhere, we will write $d-T$ to denote the set $\{d-t:\ t \in T\}$.

\begin{lemma} \label{l_dimGT}
Let $T$ be an effective subsequence for a numerical semigroup $S$ of genus $g$, and $d \geq \max T$ an integer. Let $r = |T|-1$. For any smooth marked curve $(C,p)$ of genus $g$:
\begin{enumerate}
\item If the Weierstrass semigroup of $(C,p)$ is not $S$, then $$\widetilde{G}^r_d(C;\ (p,d-T)) = \emptyset.$$
\item If the Weierstrass semigroup of $(C,p)$ is $S$, then the reduced structure of $\widetilde{G}^r_d(C;\ (p,d-T))$  is isomorphic to the affine space of dimension $$\rho_g(r,d;\ d-T) + \ew(S).$$
\end{enumerate}
\end{lemma}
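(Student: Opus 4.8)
The plan is to make $\widetilde{G}^r_d(C;\ (p,d-T))$ completely explicit: first pin down the line bundle using $0 \in T$, then recognize the locus as an open Schubert cell in a Grassmannian, and finally match the dimension of that cell against $\rho_g(r,d;\ d-T)+\ew(S)$ by a combinatorial identity in which the three defining properties of an effective subsequence are exactly what is needed.

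Since $0\in T$, the largest entry of the vanishing sequence $d-T$ is $d$, so any $(L,V)\in\widetilde{G}^r_d(C;\ (p,d-T))$ contains a global section vanishing to order $d$ at $p$; as $\deg L=d$ this section has divisor exactly $dp$, forcing $L\cong\cO_C(dp)$. Fixing $L=\cO_C(dp)$, I pass to functions: $H^0(\cO_C(dp))$ is the space of rational functions with a pole of order at most $d$ at $p$ and no other poles, the pole orders occurring are exactly $S(C,p)\cap[0,d]$, and a section vanishes to order $d-t$ at $p$ exactly when the corresponding function has pole order exactly $t$. So a point of $\widetilde{G}^r_d(C;\ (p,d-T))$ is the same datum as an $(r+1)$-dimensional subspace $V\subseteq H^0(\cO_C(dp))$ (recall $r+1=|T|$) whose set of leading pole orders is precisely $T$. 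This yields part (1): the existence of such a $V$ forces $T\subseteq S(C,p)$, and since $T$ contains every generator of $S$ it generates $S$, whence $S\subseteq S(C,p)$; two numerical semigroups of genus $g$ with one contained in the other coincide, so $S(C,p)=S$, and the contrapositive is the claim.

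For part (2), assume $S(C,p)=S$ and fix a basis $\{f_k:k\in S\cap[0,d]\}$ of $H^0(\cO_C(dp))$ with $f_k$ of pole order exactly $k$. A subspace $V$ with leading-pole-order set $T$ has a unique ``reduced'' basis $\{g_t\}_{t\in T}$ in which $g_t$ has $f_t$-coefficient $1$ and $f_k$-coefficient $0$ for every other $k\in T$; one checks such a $g_t$ automatically has the form $f_t+\sum c_{t,k}f_k$ with the sum over $k\in S$, $k<t$, $k\notin T$, and conversely every choice of the coefficients $c_{t,k}$ yields a subspace with leading-pole-order set exactly $T$, since the top term $f_t$ of $g_t$ appears in no other $g_{t'}$ and is never cancelled. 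Via the standard Schubert-cell parametrization, this identifies the reduced scheme $\widetilde{G}^r_d(C;\ (p,d-T))$ with the affine space on the coordinates $(c_{t,k})$, of dimension $\sum_{t\in T}\#\{k\in S:\ 0\le k<t,\ k\notin T\}$. Writing $T=\{t_0=0<t_1<\cdots<t_r\}$ and using $\#(S\cap[0,t_i))=t_i-\#\{\text{gaps}<t_i\}$ together with $\#(T\cap[0,t_i))=i$, this quantity rearranges---after substituting the definition of $\rho$ from Definition \ref{d_rho}---into $\rho_g(r,d;\ d-T)+\sum_{t\in T,\ t>0}\#\{\text{gaps}>t\}$. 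To finish, observe that every positive element of $T$ is either a generator of $S$ or composite, and by property (3) of Definition \ref{d_effsubseq} every composite element of $T$ is at least the largest gap of $S$, hence has no gap above it; so the residual sum collapses to $\sum_{\text{generators }a}\#\{\text{gaps}>a\}=\ew(S)$.

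The main obstacle is bookkeeping rather than conceptual: keeping the Schubert-cell dimension count in step with the $\rho$-formula of Definition \ref{d_rho}, and being careful throughout to work with the open cell, where the vanishing sequence equals $d-T$ exactly, rather than the ambient Schubert variety where it merely dominates $d-T$. Once the parametrization by the $c_{t,k}$ is set up, properties (1)--(3) of the effective subsequence get used exactly once each---(1) to locate $L$, (2) to make $T$ generate $S$, (3) to kill the residual sum---and no further geometric input is required.
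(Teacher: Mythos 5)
Your proposal is correct and follows essentially the same route as the paper: use $0\in T$ to pin down $L=\cO_C(dp)$ and deduce part (1) from genus equality, then identify the reduced locus with an open Schubert cell for the pole-order flag in $H^0(\cO_C(dp))$ and reduce the dimension count to the identity in which composite elements of $T$ contribute nothing. Your only deviation is writing the cell parametrization explicitly via the reduced bases $g_t=f_t+\sum c_{t,k}f_k$ instead of quoting the Schubert-cell dimension formula, which is a harmless expansion of the same argument.
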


\begin{proof}
Suppose that $(L,V) \in \widetilde{G}^r_d(C;\ (p,d-T))$. Since $0 \in T$, one of the vanishing orders of $V$ must be $d$ itself. Therefore $L$ must be $\cO_C(d \cdot p)$, and $V$ may be regarded as a vector space of rational functions on $C$, regular away from $p$, including functions of pole orders $t \in T$ and no others. In particular, $T$ is a subset of the Weierstrass semigroup of $p$. Hence $S(C,p)$ contains all of the generators of $S$, and hence is precisely equal to $S$ since $S$ and $S(C,p)$ have the same genus. This proves part (1).

Now suppose that the Weierstrass semigroup of $(C,p)$ is $S$. Let $W$ be the vector space of global sections of $\cO_C(d \cdot p)$; regard the elements of $W$ as rational functions on $C$. This space has a complete flag $\{0\} = W_0 \subset W_1 \subset \cdots \subset W_{\ell} = W$, where $W_i$ consists of those rational functions of pole order less than $s_i$, where $S = \{0=s_0, s_1, s_2, \cdots \}$ (written in increasing order). Then the reduced structure of $\widetilde{G}^r_d(C;\ (p,d-T))$ may be identified with an open Schubert cell in the Grassmannian of $(r+1)$-dimensional subspaces of $W$ with respect to this flag, hence it is isomorphic to an affine space. If we write $T = \{s_{j_i}:\ i = 0,\cdots,r\}$ ($j_i$ increasing with $i$), then the dimension of this Schubert cell is equal to $\sum_{i=0}^r (j_i - i)$. For $i=0,1,2,\cdots,r$, $s_{j_i}-j_i$ is equal to the number of gaps below $s_{j_i}$, and therefore $$j_i - i = (s_{j_i} - i)  - g + (\# \textrm{gaps of $S$ greater than $s_{j_i}$} ).$$

Summing over all $i$ and performing some algebra, we obtain $$\dim \widetilde{G}^r_d(C;\ (p,d-T)) = \rho_g(r,d;\ d-T) - g + \sum_{t \in T} \left( \# \textrm{gaps of $S$ greater than $t$} \right).$$ Now, the value $0 \in T$ contributes $g$ to the sum on the right side of this equation, the set of generators of $S$ contribute $\ew(S)$ total to the sum, and all elements of $T$ that are composite in $S$ have no gaps of $S$ above them, thus contribute $0$. Therefore $\dim \widetilde{G}^r_d(C;\ (p,d-T)) = \rho_g(r,d; d-T) + \ew(S)$.
\end{proof}

\begin{rem} \label{rem_wtbound}
If $T$ were selected to be $S \cap \{n \in \NN:\ n \leq 2g-1\}$ (that is, if we include many composite elements), then the same proof would show that $\widetilde{G}^r_d(C;\ (p,d-T))$ is either empty or a single point, and the following corollary would prove the ordinary weight bound $\codim \msg \leq \wt(S)$. Omitting the composite elements is precisely what strengthens the bound from $\wt(S)$ to $\ew(S)$.
\end{rem}

\begin{cor} \label{c_ewboundfam}
Let $(\cC/B, s)$ be a smooth, proper family of genus $g$ curves with a section, and consider the subvariety $$B^S = \{b \in B:\ (\cC_b, s(b)) \in \msg \}$$ of marked curves with Weierstrass semigroup $S$. If $B^S$ is nonempty, then $\dim B^S \geq \dim B - \ew(S)$.
\end{cor}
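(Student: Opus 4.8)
The plan is to deduce Corollary~\ref{c_ewboundfam} from Lemma~\ref{l_dimGT} by relativizing over the base $B$ and applying the dimension bound of Lemma~\ref{l_bndimbound}. First I would fix an effective subsequence $T$ for $S$ (for instance, $T$ consisting of $0$ together with all generators of $S$; such a $T$ exists by Definition~\ref{d_effsubseq}, since condition (3) is vacuous for this choice), set $r = |T| - 1$, and choose an integer $d \geq \max T$. Then I would form the relative scheme $\widetilde{G}^r_d(\cC/B;\ (s, d-T)) \to B$ from the definitions in Section~\ref{s_dp}. By part (1) of Lemma~\ref{l_dimGT}, the fiber of this scheme over a point $b \in B$ is empty unless $(\cC_b, s(b)) \in \msg$, and by part (2), over each point of $B^S$ the fiber is nonempty (it is an affine space of dimension $\rho_g(r,d;\ d-T) + \ew(S)$). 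Hence the image of $\widetilde{G}^r_d(\cC/B;\ (s, d-T))$ in $B$ is exactly $B^S$, and in particular this relative scheme is nonempty.

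Next I would combine the two dimension estimates. On one hand, Lemma~\ref{l_bndimbound} gives that every local dimension of $\widetilde{G}^r_d(\cC/B;\ (s, d-T))$ is at least $\dim B + \rho_g(r,d;\ d-T)$; pick an irreducible component $Z$ whose image in $B$ is dense in a component of $B^S$ of maximal dimension, so $\dim Z \geq \dim B + \rho_g(r,d;\ d-T)$ (one should be slightly careful to pass to a component of $B^S$ of top dimension and a component of the relative scheme dominating it, but this is routine). On the other hand, the morphism $Z \to B^S$ has all fibers of dimension exactly $\rho_g(r,d;\ d-T) + \ew(S)$ by part (2) of Lemma~\ref{l_dimGT}, so $\dim B^S \geq \dim Z - \bigl(\rho_g(r,d;\ d-T) + \ew(S)\bigr)$. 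Subtracting, $\dim B^S \geq \dim B + \rho_g(r,d;\ d-T) - \rho_g(r,d;\ d-T) - \ew(S) = \dim B - \ew(S)$, which is the claim.

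The main obstacle I anticipate is the bookkeeping around irreducible components and fiber dimensions of the projection $\widetilde{G}^r_d(\cC/B;\ (s,d-T)) \to B$. Lemma~\ref{l_bndimbound} is a statement about \emph{every} local dimension of the total space, so to convert it into a lower bound on $\dim B^S$ I need to ensure that some component of the total space dominates a top-dimensional component of $B^S$ and still satisfies the dimension bound; and I need the fiber dimension over that component to be exactly the value from part (2) of Lemma~\ref{l_dimGT} rather than merely bounded by it. Both points are handled by the precise affine-space description in part (2) (which pins the fiber dimension down to an equality, and shows the fibers are irreducible, hence the total space has a unique component dominating each component of $B^S$), so the argument goes through, but it is the step where care is needed. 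A secondary, purely formal point is ensuring the relative scheme $\widetilde{G}^r_d(\cC/B;\ (s,d-T))$ makes sense for the given family, which is covered by the constructions recalled in Section~\ref{s_dp}.
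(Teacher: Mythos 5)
Your proposal is correct and follows essentially the same route as the paper: form $\widetilde{G}^r_d(\cC/B;\ (s,d-T)) \to B$ for an effective subsequence $T$, note that its image is $B^S$ with all fibers of dimension exactly $\rho_g(r,d;\ d-T)+\ew(S)$ by Lemma~\ref{l_dimGT}, and conclude via the lower bound of Lemma~\ref{l_bndimbound}. The extra care you take with components and fiber dimensions is exactly the bookkeeping the paper leaves implicit, so nothing further is needed.
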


\begin{proof}
The morphism $\widetilde{G}^r_d(\cC/B;\ (s,d-T)) \rightarrow B$ has image equal to $B^S$, and all fibers of dimension $\rho_g(T) + \ew(S)$. Hence $$\dim B^S = \dim \widetilde{G}^r_d(\cC/B;\ (s,d-T)) - \rho_g(r,d; d-T) - \ew(S).$$ Lemma \ref{l_bndimbound} now gives the result.
\end{proof}

\begin{prop} \label{p_epdp}
Let $S$ be a numerical semigroup, and let $T$ be an effective subsequence for $S$. Let $d$ be any integer greater than or equal to $\max T$. Also let $g$ be the genus of $S$ and $r = |T|-1$. The map $\widetilde{\cG}^r_{g,d}(d-T) \rightarrow \cM_{g,1}$ gives a bijection between the irreducible components of $\widetilde{\cG}^r_{g,d}(d-T)$ and $\msg$. Under this bijection, the effectively proper components of $\msg$ correspond to the dimensionally proper components of $\widetilde{\cG}^r_{g,d}(d-T)$.

In particular, $\msg$ has effectively proper points if and only if $\widetilde{\cG}^{r}_{g,d}(d-T)$ has dimensionally proper points.
\end{prop}
\begin{proof}
The fiber of this morphism over the point corresponding to a marked curve $(C,p)$ is equal to $\widetilde{G}^r_d(C;\ (p,d-T))$. By Lemma \ref{l_dimGT}, this fiber is either irreducible of dimension $\rho_g(r,d;\ d-T) + \ew(S)$ (if $(C,p) \in \msg$), or empty (otherwise). From this it follows that the irreducible components are in bijection, and that a component of $\msg$ has dimension $\dim \cM_{g,1} - \ew(S)$ if and only if the corresponding component of $\widetilde{\cG}^r_{g,d}(d-T)$ has dimension $\dim \cM_{g,1} + \rho_g(r,d;\ d-T)$.
\end{proof}

We can now prove Theorem \ref{t_ewbound}.

\begin{proof}[Proof of Theorem \ref{t_ewbound}]
Let $(C,p)$ be any marked smooth curve with Weierstrass semigroup $S$. Let $(\cC /B,s)$ be a versal deformation of $(C,p)$. Corollary \ref{c_ewboundfam}, applied to $(\cC/B,s)$, implies that the local dimension of $\msg$ at $(C,p)$ is at least $\dim \cM_{g,1} - \ew(S)$. For any irreducible component $X$ of $\msg$, a general point of $X$ lies on no other irreducible components, hence $\dim X \geq \dim \cM_{g,1} - \ew(S)$.
\end{proof}

\section{Secundive semigroups} \label{s_secundive}

This section collects several purely combinatorial ingredients needed to perform the inductive proof of Theorem \ref{t_ewexist}.

\begin{defn}
A numerical semigroup $S$ is called \emph{secundive} if the largest gap is smaller than the sum of the two smallest generators.
\end{defn}

\begin{rem}
The author has chosen ``secundive'' as a weaker form of ``primitive'' (``primus'' and ``secundus'' meaning, respectively, ``first'' and ``second'' in Latin).
\end{rem}

\begin{lemma} \label{l_secundive}
If $S$ is a semigroup with $\ew(S) \leq g-1$, then $S$ is secundive.
\end{lemma}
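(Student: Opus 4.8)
The plan is to argue by contraposition: assuming $S$ is not secundive, I will exhibit enough pairs $(a,b)$ with $a$ a generator, $b$ a gap, and $a<b$ to force $\ew(S) \geq g$. Let $m < n$ denote the two smallest generators of $S$ (so in particular $S$ is non-ordinary; the ordinary semigroup is secundive vacuously or trivially, so we may assume $m$ exists and $n$ does too, with $n \geq m+1$). Failure of secundivity means the largest gap $\ell$ satisfies $\ell \geq m+n$. The key observation is that $m$ is a generator below every gap exceeding $m$, and $n$ is a generator below every gap exceeding $n$; so $\ew(S)$ is at least $(\#\text{ gaps } > m) + (\#\text{ gaps } > n)$, plus contributions from any further generators.

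First I would set up the counting: write $G = \NN\setminus S$ for the set of gaps, $|G| = g$. Since $1,2,\dots,m-1$ are gaps and $m \in S$, all gaps below $m$ are among $1,\dots,m-1$, and there are exactly $m-1$ of them. Hence $\#\{b \in G : b > m\} = g - (m-1)$ if $m \notin G$ — but wait, I must be careful: gaps strictly between $m$ and $n$ also exist in general. Let me instead just write $\#\{b \in G: b > m\} \geq g - (m-1)$ and $\#\{b \in G : b > n\} \geq g - (n-1) - (\#\text{ gaps strictly between }m\text{ and }n)$. This is getting delicate, so the cleaner route: since every integer in $[1,m-1]$ is a gap and $m,n$ are the first two generators, the only gaps that are $\leq n$ are contained in $[1,n]\setminus\{m\}$, and among these exactly those not in $S$; in any case $\#\{b\in G: b>n\} \geq g - (n-1)$ is false in general. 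The robust inequality is: $\ew(S) \geq \#\{b \in G : b > m\}$ just from the generator $m$ alone, and $\#\{b \in G: b>m\} = g - \#\{b\in G: b\leq m\} = g-(m-1)$ since the gaps $\leq m$ are exactly $1,\dots,m-1$. Similarly the generator $n$ contributes $\#\{b\in G: b>n\}$ additional pairs, and $\#\{b\in G: b>n\} = g - \#\{b \in G: b \leq n\} = g - (n-1) + \#\{b\in S: m<b\le n\} \cdot(\dots)$ — I'll handle the between-gaps carefully. The point is that $\ell \geq m+n$ gives us lots of large gaps: in fact if $\ell\ge m+n$ then $\ell-m \geq n$ and $\ell-m$ must be a gap (else $\ell = m + (\ell-m) \in S$), and $\ell - n \geq m$ similarly is a gap, so the interval $(n,\ell]$ contains at least... this is the crux.

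The main obstacle, and where I'd spend the real effort, is bookkeeping the overlap: the pairs $(m,b)$ for $b>m$ a gap, and the pairs $(n,b)$ for $b>n$ a gap, together with any pairs from a possible third generator. I expect the cleanest argument is: every gap $b$ with $b > n$ contributes at least $2$ to $\ew(S)$ (via both $m$ and $n$), every gap $b$ with $m < b \leq n$ contributes at least $1$ (via $m$), and since $n$ is a generator not below itself but $m$ is, one does a direct tally. Using $\ell \ge m+n$: the number of gaps exceeding $n$ is at least $g - (n-1) + |\{j \in S : m < j \le n\}|$, and combined with $n \ge m+1$ and $\ell\ge m+n$ forcing enough gaps in $(n,\ell]$, the inequality $\ew(S)\ge g$ should drop out after splitting into the cases $n=m+1$ and $n\ge m+2$. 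I'd guess the $n\ge m+2$ case is easy (lots of generators $m,m+1$ may fail but $m$ and $n$ suffice since $g-(m-1)$ is already close to $g$) and the delicate case is $n=m+1$ with $S$ very dense near $m$, where one must use the large gap $\ell\ge 2m+1$ to produce the missing pairs from the generator $m$ alone. I'd present the contrapositive, do the gap-counting lemma-style, and finish by the two-case split.
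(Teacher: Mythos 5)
Your overall strategy (contrapositive plus counting generator--gap pairs) is the right one and matches the paper's opening move, but the proof is not complete, and the specific way you propose to close it does not work. The crux you flag -- ``bookkeeping the overlap'' -- is exactly where the key idea is missing. Counting only the pairs coming from the two smallest generators $m<n$ gives $\ew(S)\geq (g-m+1)+\#\{\text{gaps}>n\}$, and this can genuinely fall short of $g$ for a non-secundive semigroup. Concretely, take $S=\langle 5,7,9,11\rangle=\{0,5,7,9,10,11,12,14,15,\dots\}$: its gaps are $1,2,3,4,6,8,13$, so $g=7$, and the largest gap $13$ exceeds $5+7$, so $S$ is not secundive. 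Here the gaps above $m=5$ number $3$ and the gaps above $n=7$ number $2$, so your two-generator count gives only $5<g$; the equality $\ew(S)=7$ is rescued by the pairs $(9,13)$ and $(11,13)$ involving the \emph{other} generators. Note also that this example has $n=m+2$, so it lands in the case you guessed would be ``easy'' with $m$ and $n$ alone, and your proposed fix for the hard case (``produce the missing pairs from the generator $m$ alone'') cannot work either, since the generator $m$ contributes exactly $g-m+1$ pairs no matter what.

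What is missing is a systematic use of the generators strictly between $n$ and $m+n$, paired with the largest gap $f$. The paper's proof takes three disjoint families: (i) $(m,a)$ for all gaps $a>m$, of size $g-m+1$; (ii) $(n,a)$ for gaps $a$ with $n\leq a<m+n$; and (iii) $(a,f)$ for elements $a\in S$ with $n\leq a<m+n$ and $m\nmid a$. For (iii) one needs the small observation that such an $a$ is automatically a generator (any composite element below $m+n$ is a multiple of $m$), which your sketch does not supply. The families (ii) and (iii) together are indexed by the non-multiples of $m$ in the length-$m$ window $[n,m+n)$, of which there are exactly $m-1$; adding $g-m+1$ from (i) gives $\ew(S)\geq g$ with no case analysis at all. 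Without some version of family (iii), the estimate you are aiming for cannot be pushed through.
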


\begin{proof}
Let $S$ be a semigroup that is not secundive; we will show that $\ew(S) \geq g$. Let $m,n$ be the smallest and second-smallest generators of $S$, and let $f$ be the largest gap of $S$. Since $S$ is not secundive, $f > m+n$.

Consider the following three subsets of $\NN \times \NN$.

\begin{enumerate}
\item $\{ (m,a):\ m < a \mbox{ and } a \not\in S \}$
\item $\{ (n,a):\ n \leq a < m+n \mbox{ and } a \not\in S \}$
\item $\{ (a,f):\ n \leq a < m+n,\ m \nmid a, \mbox{ and } a \in S \}$
\end{enumerate}

These three sets are disjoint, and every pair $(x,y)$ in one of the three sets consists of a generator $x$ and a gap $y$, with $x < y$. Therefore the sum of the sizes of the three sets is less than or equal to $\ew(S)$.

The size of the first set is $g-m+1$. The sum of the sizes of the second and third sets is equal to the number of integers $a \in \{n,n+1,\cdots,m+n-1\}$ that are not divisible by $m$. There is exactly one $a$ such that $m | a$ and $n \leq a < m+n$, hence the sum of the sizes of the second and third sets is equal to $m-1$. It follows that $\ew(S) \geq g$.
\end{proof}

\begin{rem}
The method of the proof above, with slight modification, shows that the inequality $\ew(S) \geq g$ is sharp (for non-secundive semigroups), and provides a method to enumerate the equality cases. In fact, there exist non-secundive semigroups with $\ew(S) = g$ for all $g \geq 6$. On the other hand, all semigroups of genus $g \leq 5$ are secundive.
\end{rem}

Our inductive argument requires reducing the study of one secundive semigroup to another, which must be smaller both in genus and in effective weight. This is accomplished with the following operation.

\begin{defn}
For two integers $s,k$, with $k \geq 2$, define 
\begin{equation*}
\slide_k(s) = \begin{cases}
s & \mbox{ if $s \equiv 0\mod k$ }\\
s-2 & \mbox{ if $s \equiv 1\mod k$}\\
s-1 & \mbox{ otherwise.}
\end{cases}
\end{equation*}
For a set $S$ of integers and an integer $k \geq 2$, define $$\slide_k(S) = \{ \slide_k(s):\ s \in S \}$$
\end{defn}

In other words, $\slide_k$ fixes all multiples of $k$ in place, and replaces each non-multiple with the preceding non-multiple. In particular, this function is order-preserving when restricted to non-multiples of $k$; this is the feature which makes it interact well with the effective weight.

\begin{defn} \label{d_goodslider}
Let $S$ be a secundive numerical semigroup of genus $g$. Call an element $k \in S$ a \emph{good slider} if the following three conditions are met.
\begin{enumerate}[(a)]
\item $S' = \slide_k(S)$ is a secundive numerical semigroup of genus $g-1$.
\item $\ew(S') = \ew(S) -1$.
\item There exists an effective subsequence (Definition \ref{d_effsubseq}) $T$ for $S$ such that $\slide_k(T)$ is an effective subsequence for $S'$.
\end{enumerate}
\end{defn}

\begin{lemma} \label{l_slidercriteria}
Let $S$ be a secundive numerical semigroup, and let $m$ be the smallest generator of $S$.\begin{enumerate}
\item If $m+1 \not\in S$, then $m$ is a good slider.
\item If the largest gap of $S$ is less than $2m-1$, then any $k \in S$ such that $k+1 \not\in S$ is a good slider.
\item If $m \geq 3$, $2m-2 \in S$ and $2m-1$ is the largest gap of $S$, then $2m-2$ is a good slider.
\end{enumerate}
\end{lemma}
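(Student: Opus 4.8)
The plan is to verify conditions (a), (b), and (c) of Definition~\ref{d_goodslider} in each of the three cases, building toward the general machinery by handling the cleanest case first. The key observation underlying everything is that $\slide_k$ is a strictly order-preserving bijection from the non-multiples of $k$ to itself, and fixes every multiple of $k$ in place; consequently, if $S$ contains $k$ and is closed under addition, then whether $\slide_k(s) + \slide_k(t)$ lands back in $\slide_k(S)$ can be checked residue-class by residue-class. I would first record a general lemma (or just a paragraph of bookkeeping): if $k \in S$ and $k+1 \notin S$, then $\slide_k$ removes exactly one gap-or-element worth of ``room'' — more precisely, I expect that $\slide_k(S)$ has genus exactly $g-1$ precisely because the element directly below the first non-multiple above a suitable threshold gets absorbed. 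The hypothesis $k + 1 \notin S$ is what guarantees that the shift does not create a new failure of semigroup closure near $k$ itself.

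For part (1), with $k = m$ the smallest generator and $m+1 \notin S$: here every element of $S$ below $2m$ except $0$ and $m$ is a gap, so the ``sliding'' near the bottom is forced and transparent. I would check (a) by confirming $\slide_m(S)$ is still closed under addition — the only potential obstructions involve sums $s+t$ with $s \equiv 1$ or $s,t$ in various residues mod $m$, and closure of $S$ plus $m+1 \notin S$ rules these out — and that it drops exactly one gap, giving genus $g-1$; secundivity of $S'$ should follow because the largest gap can only decrease (or stay) under $\slide_m$ while the two smallest generators of $S'$ are $m$ (still a generator, since $m-1,\dots$ below it are unchanged and still gaps/absent) and something $\geq$ the image of $n$. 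For (b), I would take $T$ to be the minimal effective subsequence (the generators together with $0$) and observe that $\slide_m$ sends generators of $S$ to generators of $S'$ injectively and without hitting composites below the top gap — this is where condition (3) of Definition~\ref{d_effsubseq} interacts with order-preservation — and a short count via the ``gaps above $t$'' description of $\ew$ from the proof of Lemma~\ref{l_dimGT} shows $\ew(S') = \ew(S) - 1$; in fact (c) and (b) are closely linked and I would try to prove them together by exhibiting $T$ explicitly.

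Part (2) is the same argument localized higher up: the hypothesis that the largest gap $f$ satisfies $f < 2m - 1$ means that above $f$ everything is composite and unstructured, so sliding at any $k$ with $k+1 \notin S$ behaves just like the bottom case — all the relevant arithmetic happens in a range where $S$ looks ``generic,'' and the genus-drop and effective-weight-drop computations are insensitive to which such $k$ we pick. I would phrase this so that part (1) and part (2) share a common sublemma about ``sliding at a point just below a gap.'' Part (3) is the genuinely different and, I expect, hardest case: here $k = 2m-2$ is itself composite (it equals $(m-1) + (m-1)$ if $m-1 \in S$, but under the stated hypotheses $2m-2 \in S$ need not decompose that way — one must check $m-1$ may well be a gap), and $2m-1$ is the largest gap, so the semigroup is ``symmetric-like'' near the top. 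The main obstacle will be verifying condition (a) — that $\slide_{2m-2}(S)$ is still a semigroup — because sliding a composite element requires checking that sums such as $\slide_{2m-2}(s) + \slide_{2m-2}(2m-2)$ and $\slide_{2m-2}(m) + \slide_{2m-2}(m)$ still lie in $S'$; the constraints $m \geq 3$ and ``$2m-1$ is the \emph{largest} gap'' are exactly what force $2m, 2m+1, 2m+2, \ldots \in S$, and $\slide_{2m-2}$ fixes $2m-2$ and $2m$ (both $\equiv 0$ or the analysis is by residue), collapses the gap $2m-1$, and I would need to trace carefully that no new gap is opened among small values. For (c) in this case I would take $T$ to be the generators of $S$ together with $0$ and possibly $2m-2$ itself (note $2m-2$ is composite but it is \emph{not} less than the largest gap $2m-1$, so including it is permitted by condition (3) of Definition~\ref{d_effsubseq}), and check that its image is an effective subsequence for $S'$; this is the subtle point where allowing composite elements equal to or above the top gap in $T$ pays off.
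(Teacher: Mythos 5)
Your overall plan --- verify (a), (b), (c) of Definition~\ref{d_goodslider} case by case, exploiting that $\slide_k$ fixes multiples of $k$ and preserves order among non-multiples --- is the same as the paper's, but your proposed witness for condition (c) does not work, and this is a genuine gap. You take $T=\{0\}\cup\{\mbox{generators of }S\}$ and assert that $\slide_m$ carries generators of $S$ to generators of $S'$; what condition (c) actually requires is the reverse containment, namely that \emph{every} generator of $S'=\slide_m(S)$ lies in $\slide_m(T)$, and this can fail: a generator of $S'$ lying above the largest gap of $S'$ may pull back to a \emph{composite} element of $S$. For example, let $S=\{0,5,7\}\cup\{n\in\NN:\ n\geq 10\}$ (genus $7$, largest gap $9$, secundive, smallest generator $m=5$, and $6\notin S$, so part (1) applies). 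Its generators are $5,7,11,13$, while $14=7+7$ is composite. Here $S'=\slide_5(S)=\{0,5,6\}\cup\{n\in\NN:\ n\geq 9\}$ has generators $5,6,9,13$, and $13=\slide_5(14)$ pulls back to the composite $14$. So $\slide_5(\{0,5,7,11,13\})=\{0,5,6,9,12\}$ omits the generator $13$ of $S'$ and violates condition (2) of Definition~\ref{d_effsubseq}. The paper's proof sidesteps exactly this by taking $T$ to be $\{0\}$ together with the smallest element of $S$ in each congruence class modulo $k$ (in the example, $\{0,5,7,11,13,14\}$, which legitimately contains the composite $14$ since it exceeds the largest gap); that set is manifestly carried by $\slide_k$ onto the identical construction for $S'$, and secundivity guarantees both sets are effective subsequences.

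Two smaller misattributions in your sketch would also need repair. In part (1) you say closure of $S'$ under addition follows from closure of $S$ together with $m+1\notin S$; it does not --- secundivity is essential. (For instance $S=\langle 3,7\rangle$ has $4\notin S$, yet $\slide_3(S)$ contains $5$ but not $10$; of course this $S$ is outside the lemma's hypotheses, but it shows your proposed mechanism is not the right one.) The correct argument, as in the paper, is that a sum of two positive elements of $S'$ is either a multiple of $m$, hence in $S'$, or at least $m+n'$, which exceeds the largest non-element $f'=\slide_m(f)$ precisely because $f<m+n$ and sliding preserves order among non-multiples; relatedly, the genus drop by one has nothing to do with $k+1\notin S$ (the gap $1$ simply slides out of $\NN$). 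Finally, in part (3) your parenthetical is backwards: $2m-2$ \emph{is} less than the largest gap $2m-1$; what saves the inclusion of $2m-2$ in $T$ is that it is in fact a generator, since every composite element of $S$ is at least $2m$.
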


\begin{proof}
\emph{Part (1).} Suppose that $m+1 \not\in S$, and let $S' = \slide_m(S)$. Let $n$ be the second-smallest generator of $S$, and let $f$ be the largest gap of $S$; note that neither is divisible by $m$. Then $m$ is the smallest positive element of $S'$ (this is where we use the hypothesis that $m+1 \not\in S$), the smallest element of $S'$ that isn't a multiple of $m$ is $n' = \slide_m(n)$, and the largest integer that is not in $S'$ is $f' = \slide_m(f)$.

Since $S$ is secundive, $f-n \leq m-1$. Equivalently, there are fewer than $m-1$ non-multiples of $m$ between $n$ and $f$ inclusive. The same is true of $n'$ and $f'$ since sliding preserves order among non-multiples of $m$, hence $f'-n' \leq m-1$ as well.

The sum of any two elements of $S'$ is either a multiple of $m$ or exceeds $m+n'$, which exceeds $f'$, hence this sum lies in $S'$. So $S'$ is indeed a numerical semigroup. Since $m+n' > f'$, $S'$ is secundive. The gaps of $S'$ are precisely $\{ \slide_m(a):\ a \not\in S,\ a \geq 2\}$, so the genus of $S'$ is $g-1$.

To compare $\ew(S)$ and $\ew(S')$, observe first that in a secundive semigroup, an element $a$ smaller than the largest gap is a generator if and only if it is either equal to $m$ or not divisible by $m$. All other generators (those larger than the largest gap) do not contribute to the effective weight. Next observe that $m$ has one fewer gap above it in $S'$ than in $S$. Finally, note that $\slide_m$ establishes a bijection between the generators of $S$ between $m$ and $f$ exclusive and the generators of $S'$ between $m$ and $f'$ exclusive, and that the number of gaps above a given generator is preserved by this bijection. This shows that $\ew(S') = \ew(S)-1$.

For part (c) of Definition \ref{d_goodslider}, let $T$ consist of $0$ and also the smallest positive element of $S$ in each congruence class modulo $m$. This set necessarily includes all generators of $S$, and the fact that $S$ is secundive implies that any composite elements of $S$ in $T$ exceeds the largest gap, hence $T$ is an effective subsequence of $S$. The set $T' = \slide_m(T)$ is precisely equal to the set containing $0$ and the smallest positive element of $S'$ in each congruence class modulo $m$, so since $S'$ is also a secundive semigroup, $T'$ is an effective subsequence of $S'$ by the same reasoning. This completes the proof that $m$ is a good slider when $m+1 \not\in S$.

\emph{Part (2).} Now assume that the largest gap of $S$ is less than $2m-1$, and that $k \in S$ is an element with $k+1 \not\in S$. Then $S$ is primitive. The smallest positive element of $S'$ is either $m-1$ or $m$, and the largest gap of $S'$ is less than $2m-2$, hence $S'$ is in fact a \emph{primitive} semigroup as well. The operation $\slide_k$ preserves the number of gaps above every element of $S$, except in one case: the number of gaps of $S'$ above $\slide_k(k) = k$ is one less than the number of gaps above $k$ in $S$. So $\ew(S') = \ew(S) -1$. Finally, the set $T$ can be constructed in a manner similar to in Part (1): let $T$ consist of $0$ and the smallest positive element of $S$ in each congruence class modulo $k$. Then $T' = \slide_k(T)$ is the result of an identical construction applied to $S'$, and a set constructed this way contains all generators of the semigroup. Since $S$ and $S'$ are primitive, the sets $T$ and $T'$ are effective subsequences, since the condition of containing no composite elements less than the largest gap is vacuous.

\emph{Part (3).} Now assume that $2m-2 \in S$, $2m-1 \not\in S$, and all integers larger than $2m$ are in $S$. Then again, $S$ is primitive. The largest gap of $S' = \slide_{2m-2}(S)$ is $2m-3$, and the smallest element of $S'$ is $m-1$ (note that $m \neq 2m-2$ since we are assuming $m \geq 3$), so $S'$ is also a primitive semigroup. The rest of the argument is now analogous to the proof of Part (2).
\end{proof}

\begin{lemma} \label{l_slidersexist}
Let $S$ be a secundive numerical semigroup, and let $m$ be the smallest generator of $S$. If $m+1 \in S, 2m-2 \not\in S$, and $2m-1 \not\in S$, then $\ew(S) \geq g-1$. Furthermore, equality $\ew(S) = g-1$ occurs if and only if $S = \{0,m,m+1\} \cup H_{2m}$.
\end{lemma}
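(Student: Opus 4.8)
The plan is to use the three hypotheses to pin down the structure of $S$ almost completely, and then to bound $\ew(S)$ by a direct count of generator--gap pairs. First I would check that $m \geq 4$: since $0 \in S$ the case $m=1$ is impossible, $m=2$ would force $2m-2 = m \in S$, and $m=3$ would force $2m-2 = m+1 \in S$, each contradicting the hypothesis $2m-2 \notin S$. With $m \geq 4$, the element $m+1$ lies in $S$ and is less than $2m$, hence is a generator, so the two smallest generators of $S$ are $m$ and $n = m+1$. Since $S$ is secundive its largest gap $f$ satisfies $f < m+n = 2m+1$; as $2m = m+m$ is composite we have $2m \in S$, so $f \leq 2m-1$, and since $2m-1 \notin S$ we conclude $f = 2m-1$. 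Therefore $S = \{0,m,m+1\} \cup (S \cap I) \cup H_{2m}$ where $I = \{m+2, m+3, \ldots, 2m-1\}$, every positive element of $S$ below $2m$ is a generator, and the gaps of $S$ are exactly $\{1,\ldots,m-1\}$ together with the $k := |I \backslash S|$ gaps in $I$. In particular $g = (m-1)+k$ and $2 \leq k \leq m-2$, since $|I| = m-2$ and $2m-2, 2m-1 \in I \backslash S$.

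Next I would compute using $\ew(S) = \#\{(a,b) : 0 < a < b,\ a \text{ a generator},\ b \text{ a gap}\}$. No gap in $\{1,\ldots,m-1\}$ has a positive element of $S$ below it, and no generator $\geq 2m$ can lie below a gap since $f = 2m-1$; so the only contributing pairs have $a \in \{m,m+1\} \cup (S \cap I)$ and $b \in I \backslash S$. For a gap $b \in I$ the generators below $b$ are precisely $m$, $m+1$, and the elements of $S \cap I$ that are below $b$, whence $\ew(S) = 2k + \Sigma$ with $\Sigma := \#\{(a,b) : a \in S \cap I,\ b \in I \backslash S,\ a < b\}$. Counting $\Sigma$ from the generator side, each of the $|S \cap I| = m-2-k$ elements of $S \cap I$ lies below both gaps $2m-2$ and $2m-1$, so $\Sigma \geq 2(m-2-k)$, giving $\ew(S) \geq 2k + 2(m-2-k) = 2(m-2)$. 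Since $g-1 = (m-2)+k \leq 2(m-2)$, this yields $\ew(S) \geq g-1$. For the equality claim: if $\ew(S) = g-1$, the chain $g-1 = (m-2)+k \leq 2(m-2) \leq \ew(S) = g-1$ forces $k = m-2$, hence $S \cap I = \emptyset$ and $S = \{0,m,m+1\} \cup H_{2m}$; conversely this $S$ is secundive (its largest gap $2m-1$ is below $m+(m+1)$), satisfies the three hypotheses, has genus $2m-3$, and has effective weight $2(m-2) = g-1$, its only generators below a gap being $m$ and $m+1$, each with $m-2$ gaps above it.

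I do not anticipate a serious obstacle. The step requiring the most care is the deduction in the first paragraph that $f = 2m-1$, together with the resulting clean description of which integers are gaps and which elements of $S$ are generators; this is what makes the count in the second paragraph transparent. One also has to be attentive, when computing $\ew(S)$, to identify for each gap $b \in I$ exactly the set of generators lying below it, so that nothing outside $\{m,m+1\} \cup (S \cap I)$ is inadvertently included or omitted.
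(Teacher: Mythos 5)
Your proof is correct and follows essentially the same route as the paper's: pin down that $m\geq 4$ and that the largest gap is $2m-1$ (so every positive element of $S$ below $2m$ is a generator), then count generator--gap pairs to get $\ew(S)\geq 2m-4$ and compare with the genus bound $g\leq 2m-3$, with equality forcing $S\cap\{m+2,\dots,2m-3\}=\emptyset$. The only difference is bookkeeping: you parametrize by the number $k$ of gaps in $\{m+2,\dots,2m-1\}$, whereas the paper partitions the pair set into four types, but both yield the same inequality chain and equality analysis.
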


\begin{proof}
Note that the hypotheses imply that $m \geq 4$, so $m+1 < 2m-2$. Also note that since $S$ is secundive and contains $m+1$, in fact $S$ is primitive and the largest gap is $2m-1$.

Since all elements of $S$ less than the largest gap are generators, the effective weight (which is equal to the weight) is equal to the size of the set $E = \{(a,b) \in \NN^2:\ 0<a<b,\ a\in S,\ b \not\in S\}$.

The elements of $E$ can be partitioned into four types.
\begin{enumerate}
\item The pairs $(m,2m-2),(m,2m-1),(m+1,2m-2)$, and $(m+1,2m-1)$.
\item Pairs of the form $(m,a)$ or $(m+1,a)$, where $m+2 \leq a \leq 2m-3$ and $a \not\in S$.
\item Pairs of the form $(a,2m-2)$ or $(a,2m-1)$, where $m+2 \leq a \leq 2m-3$ and $a \in S$.
\item Pairs of the form $(a,b)$, where $m+2 \leq a < b \leq 2m-3$, $a \in S$, and $b \not\in S$.
\end{enumerate}

There are four pairs of the first type. Since every element $a$ between $m+2$ and $2m-3$ inclusive appears in either two pairs of the second type or two pairs of the third type (depending on whether or not $a \in S$), the total number of pairs of either the second or third type is exactly $2(m-4)$. Therefore, adding the four pairs of the first type, $\ew(S)$ is equal to $2m-4$ plus the number of pairs of the fourth type. On the other hand, the genus of $S$ is at most $(m-1) + (m-2) = 2m-3$, with equality if and only if $S$ contains no elements between $m+2$ and $2m-3$ inclusive. Hence $g-1 \leq 2m-4 \leq \ew(S)$, with equality throughout if and only if $S$ constists precisely of $0,m,m+1$ and all integers greater than or equal to $2m$.
\end{proof}

\begin{cor} \label{c_slidersexist}
If $S$ is a numerical semigroup with $1 \leq \ew(S) \leq g-2$, then $S$ has a good slider. If $\ew(S) = g-1$, then $S$ has a good slider unless $S = \{0,m,m+1\} \cup H_{2m}$ for some $m \geq 4$.
\end{cor}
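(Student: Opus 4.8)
The plan is to deduce Corollary \ref{c_slidersexist} by combining Lemma \ref{l_slidercriteria} and Lemma \ref{l_slidersexist} through a case analysis on the local structure of $S$ near its smallest generator $m$. Let $S$ be a secundive numerical semigroup of genus $g$ with $1 \leq \ew(S) \leq g-1$; note that secundivity is automatic here by Lemma \ref{l_secundive}, and that $\ew(S) \geq 1$ forces $S$ to be non-ordinary, so $m \geq 2$ is a genuine generator. First I would split on whether $m+1 \in S$. If $m+1 \not\in S$, then part (1) of Lemma \ref{l_slidercriteria} immediately produces the good slider $k = m$, and we are done regardless of the value of $\ew(S)$.

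So assume $m+1 \in S$. Since $S$ is secundive and contains both $m$ and $m+1$, it is in fact primitive, and the largest gap $f$ satisfies $f < m+n \le m+(m+1)$, so $f \le 2m-1$ (in fact $f \le 2m$, but $2m \in S$ since $2m = m+m$, so $f \le 2m-1$). Now I would split according to the value of $f$. If $f < 2m-1$, i.e. $f \le 2m-2$, then the largest gap is less than $2m-1$ and part (2) of Lemma \ref{l_slidercriteria} applies: any $k \in S$ with $k+1 \not\in S$ is a good slider. Such a $k$ exists because $S$ is cofinite but not ordinary, so it cannot be an interval $\{0\} \cup \{m, m+1, m+2, \dots\}$ unless it is ordinary, and in any non-ordinary case there is a gap above some element of $S$, giving a $k \in S$ with $k + 1 \notin S$; concretely $k = f-1$ works if $f - 1 \in S$, and otherwise one walks down from $f$. (I would spell this out: take the largest element of $S$ smaller than $f$; call it $k$; then $k+1 \notin S$.) The remaining case is $f = 2m-1$. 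Here I split once more on whether $2m-2 \in S$. If $2m-2 \in S$ and $m \ge 3$, part (3) of Lemma \ref{l_slidercriteria} gives that $2m-2$ is a good slider. If $m = 2$ then $g \le 1$ and $\ew(S) \le g - 1 \le 0$, contradicting $\ew(S) \ge 1$, so this subcase is vacuous. Finally, if $2m-2 \notin S$ (and still $f = 2m-1 \notin S$, $m+1 \in S$), then Lemma \ref{l_slidersexist} applies and tells us $\ew(S) \ge g-1$, with equality exactly when $S = \{0,m,m+1\} \cup H_{2m}$ (and necessarily $m \ge 4$). Thus in this last subcase either $\ew(S) \ge g-1$ is actually an equality and $S$ is the stated exceptional semigroup, or else $\ew(S) \ge g$, which is excluded by hypothesis. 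Collecting the cases: whenever $\ew(S) \le g-2$ a good slider exists, and whenever $\ew(S) = g-1$ a good slider exists except when $S = \{0,m,m+1\} \cup H_{2m}$.

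The only mild subtlety—and the step I would be most careful about—is verifying that the hypotheses of the three parts of Lemma \ref{l_slidercriteria} and of Lemma \ref{l_slidersexist} genuinely partition the space of cases with no gaps and no overlaps, in particular checking the boundary subcases $m = 2$ and $m = 3$ by hand (as above, $m=2$ is vacuous, and $m=3$ with $2m-2 = 4 \in S$, $f = 5$ is covered by part (3)), and confirming that in the $f \le 2m-2$ case a suitable slider element $k$ always exists. None of this involves any real computation beyond bookkeeping, so I expect the corollary to follow directly once the case division is laid out cleanly.
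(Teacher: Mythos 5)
Your proposal is correct and is exactly the paper's (one-line) argument: the paper simply says to combine Lemma \ref{l_slidercriteria} and Lemma \ref{l_slidersexist}, and your case analysis on $m+1\in S$, the size of the largest gap, and $2m-2\in S$ is precisely the intended way to do so, with the boundary cases $m=2,3$ and the existence of an element $k\in S$ with $k+1\notin S$ checked correctly.
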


\begin{proof}
Suppose that $1 \leq \ew(S) \leq g-1$ and that $S$ does not have a good slider. By Lemma \ref{l_secundive}, $S$ is secundive. Let $m$ be the smallest generator of $S$. By Lemma \ref{l_slidercriteria}(1), $m+1  \in S$; this must be the second-smallest generator. Since $S$ is secundive, the largest gap of $S$ is less than $m + (m+1)$, so it is at most $2m-1$. Since $\ew(S) > 0$, the largest gap is greater than $m$, hence at least $m+2$. Therefore $m+2 \leq 2m-1$, so $m \geq 3$. By Lemma \ref{l_slidercriteria}(2), the largest gap is in fact equal to $2m-1$, and by Lemma \ref{l_slidercriteria}(3), $2m-2 \not\in S$. This implies that $m+1 < 2m-2$, hence $m \geq 4$. By Lemma \ref{l_slidersexist}, it follows that $\ew(S)$ is equal to $g-1$ and $S = \{0,m,m+1\} \cup H_{2m}$.
\end{proof}

\section{Existence of effectively proper points} \label{s_exist}

We can now prove Theorem \ref{t_ewexist} by assembling the ingredients of the previous sections and the following statement about elliptic curves. This lemma is similar to \cite[Proposition 5.2]{eh87}, and plays an analogous role in our argument.

\begin{lemma} \label{l_ellipticbridge}
Fix integers $d,r$, and let $T,T'$ be two vanishing sequences. As usual, denote the elements of these, in increasing order, by $t_i$ and $t_i'$. Suppose that there exists an integer $k$, $1 \leq k \leq r$, such that:
\begin{enumerate}
\item $t_0 = t'_0 = 0$ and $t_k = t'_k$;
\item for all $i \not\in \{0,k\}$, neither $t_i$ nor $t'_i$ is divisible by $t_k$;
\item for all $i \not\in \{0,k\}$, the inequalities $t_{i-1} \leq t'_i < t_i$ hold.
\end{enumerate}
Then $\widetilde{\cG}^r_{1,d}(T',d-T)$ has dimensionally proper points.
\end{lemma}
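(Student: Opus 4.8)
The plan is to exhibit an explicit genus-one marked curve $(E,p',p)$ carrying a $g^r_d$ with exactly the prescribed vanishing $T'$ at $p'$ and $d-T$ at $p$, and then to check that the family of all such configurations has exactly the expected dimension. I first record that $\rho:=\rho_1(r,d;\,T',d-T)$ equals $\sum_{i\notin\{0,k\}}(t_i-t'_i)-r$: this follows from $(d-T)_j=d-t_{r-j}$ together with hypothesis (1), and by hypothesis (3) each of the $r-1$ summands is $\ge1$. Since $\dim\cM_{1,2}=2$, a point of $\widetilde{\cG}^r_{1,d}(T',d-T)$ is dimensionally proper (Definition~\ref{d_dp}) exactly when the local dimension of $\cG^r_{1,d}(T',d-T)$ there is $2+\rho$, and Lemma~\ref{l_bndimbound} already gives local dimension $\ge 2+\rho$ at every point; so it will suffice to produce one point and to bound the local dimension there above by $2+\rho$.

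To build the point, I would take an elliptic curve $E$ (chosen ordinary in case $\chr k\mid t_k$, so that $E$ has a point of exact order $t_k$), fix such a point $\tau\in E$ of exact order $t_k=t'_k$, choose $p\in E$ general, and set $p'=p+\tau$ and $L=\cO_E(d\cdot p)$. Then $V$ is to be the span of a flag $W_0\subset W_1\subset\cdots\subset W_r=V$ with $\dim W_i=i+1$, where $W_i$ is the subspace of sections vanishing to order $\ge d-t_i$ at $p$, so $W_i\subseteq H^0(L(-(d-t_i)p))=H^0(\cO_E(t_ip))$. Put $W_0=H^0(\cO_E)$ (the constants). Given $W_{i-1}$, extend it by a general line in $\PP\bigl(H^0(\cO_E(t_ip-t'_ip'))\bigr)$; note $\deg\cO_E(t_ip-t'_ip')=t_i-t'_i$ is $\ge1$ for $i\notin\{0,k\}$ by hypothesis (3), while for $i=k$ it is $0$ and $\cO_E(t_kp-t'_kp')=\cO_E(t_k\tau)\cong\cO_E$ by the choice of $\tau$, so the step-$k$ extension is forced. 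Hypothesis (2) enters precisely here: because $\tau$ has exact order $t_k$, it lies in neither $E[t_i]$ nor $E[t'_i]$ for $i\notin\{0,k\}$, and this is what keeps the residual part of $\operatorname{div}(\text{new generator})-t'_ip'-(d-t_i)p$ off $p$ and $p'$ (automatic for a general line, and forced when that residual divisor is a single point, the case $t_i-t'_i=1$).

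Next I would verify the vanishing is exactly as required: the chosen generators of the $W_i$ have strictly increasing orders $d-t_i$ at $p$ and strictly increasing orders $t'_i$ at $p'$ (using that $T,T'$ are genuine vanishing sequences and the interlacing $t_{i-1}\le t'_i<t_i$ of hypothesis (3)), so there is no cancellation of leading terms in a linear combination, and hence $\dim(V\cap H^0(L(-jp)))=\#\{i:d-t_i\ge j\}$ and $\dim(V\cap H^0(L(-jp')))=\#\{i:t'_i\ge j\}$ for all $j$; thus $(L,V)\in\widetilde G^r_d(E;\,(p',T'),(p,d-T))$, which is therefore nonempty. Conversely, any point of $\cG^r_{1,d}(T',d-T)$ over $(E_0,p'_0,p_0)$ contains a section vanishing to order $d$ at $p_0$ (since $d\in d-T$), forcing $L_0\cong\cO_{E_0}(d\cdot p_0)$, and — by the standard inequality $a^V_i(p'_0)+(d-T)_{r-i}\ge d$, an equality at $i=k$ since $t'_k=t_k$ — a section $s$ with $\operatorname{div}(s)=t_kp'_0+(d-t_k)p_0$, forcing $t_k(p'_0-p_0)\sim0$. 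Hence $\cG^r_{1,d}(T',d-T)\to\cM_{1,2}$ factors through the torsion locus $\Theta=\{(E_0,p'_0,p_0):t_k(p'_0-p_0)\sim0\}$, which has dimension $\dim\cM_{1,2}-1=1$; and running the flag description in reverse identifies the fibre over a point of $\Theta$ with a locally closed subset of $\prod_{i\notin\{0,k\}}\PP\bigl(H^0(\cO_{E_0}(t_ip_0-t'_ip'_0))\bigr)$, of dimension $\sum_{i\notin\{0,k\}}(t_i-t'_i-1)=\rho+1$ (using $h^0=\deg=t_i-t'_i\ge1$). So $\dim\cG^r_{1,d}(T',d-T)\le1+(\rho+1)=2+\rho$, and combined with Lemma~\ref{l_bndimbound} this shows the local dimension at the point constructed above is exactly $2+\rho$, so that point lies in $\widetilde{\cG}^r_{1,d}(T',d-T)$ and is dimensionally proper.

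The main obstacle is the dimension count — showing the fibre of $\cG^r_{1,d}(T',d-T)\to\Theta$ has dimension exactly $\rho+1$ — which is where the flag analysis must be carried out carefully and where hypotheses (2) and (3) are needed to pin the residual divisors away from $p,p'$ and to preclude extra vanishing. The explicit construction and the exactness check, while a little fiddly (one must track the degenerate cases $t_i-t'_i=1$ and the characteristic-$p$ choice of $E$), are routine by comparison, and the whole argument parallels that of \cite[Proposition 5.2]{eh87}.
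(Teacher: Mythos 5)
Your construction of the special linear series is essentially the paper's: both pick an elliptic curve with the two marked points differing by torsion of \emph{exact} order $t_k$, describe $V$ as a span of one-dimensional pieces pinned by vanishing at the two points, and get fibre dimension $\sum_{i\notin\{0,k\}}(t_i-t'_i-1)=\rho+1$ (your formula for $\rho$ is correct, and your choice of an ordinary curve in characteristic $p$ is a nice point the paper leaves implicit). The gap is in your upper bound. You assert that the fibre of $\cG^r_{1,d}(T',d-T)$ over \emph{every} point of the torsion locus $\Theta=\{t_k(p'_0-p_0)\sim 0\}$ embeds in $\prod_{i\notin\{0,k\}}\PP H^0(\cO_{E_0}(t_ip_0-t'_ip'_0))$, hence has dimension at most $\rho+1$, and conclude $\dim\cG^r_{1,d}(T',d-T)\le 2+\rho$. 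The ``reverse flag'' identification requires each $V_i=V(-t'_ip'_0-(d-t_i)p_0)$ to be exactly one-dimensional, spanned by a section with vanishing orders exactly $t'_i$ and $d-t_i$; hypothesis (2) delivers this only when $p'_0-p_0$ has order \emph{exactly} $t_k$. Over points of $\Theta$ where the order is a proper divisor $m'$ of $t_k$, hypothesis (2) says nothing (some $t_i$ or $t'_i$ may be divisible by $m'$), sections with divisor $t_ip'_0+(d-t_i)p_0$ can occur, the $V_i$ need not be one-dimensional or independent, and your identification---and with it the asserted global bound---is not established. (A smaller slip: the ``standard inequality'' $a^V_i(p'_0)+(d-T)_{r-i}\ge d$ is not a general fact; what you need is that some section vanishes to order $\ge t_k$ at $p'_0$ and $\ge d-t_k$ at $p_0$, forcing its divisor to be exactly $t_kp'_0+(d-t_k)p_0$.)

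This is exactly the point the paper takes care over: instead of working over all of $\cM_{1,2}$, it uses the one-parameter family $E\times B\to B$ where $B\subset E$ is obtained by deleting all points differing from $p$ by torsion of order \emph{dividing} $t_k$ except the single point $q_0$ of exact order $t_k$; it then shows the fibres are empty away from $q_0$ and of dimension $\rho+1$ over $q_0$, and invokes the ``there exists a deformation'' form of Definition \ref{d_dp}, so the problematic lower-order points never enter. Your argument can be repaired without changing its shape: the exact-order-$t_k$ locus is open in $\Theta$, and any component $Z$ of $\cG^r_{1,d}(T',d-T)$ through your constructed point has image meeting it, so the generic fibre of $Z\to\overline{\pi(Z)}$ lies over an exact-order point, where your count does give $\le\rho+1$ and hence $\dim Z\le 1+(\rho+1)=2+\rho$. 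But as written, the blanket fibre bound over all of $\Theta$---the step you yourself identify as the main obstacle---is unjustified.
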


\begin{proof}
Denote the number $t_k = t'_k$ by $m$. Fix an elliptic curve $E$ with a point $p$. Consider the trivial family $E \times E \rightarrow E$ given by projection to the second coordinate, with two sections $s_1(q) = (p,q)$ and $s_2(q) = (q,q)$. Fix a point $q_0 \in E$ differing from $p$ by torsion of order exactly $m$. Let $B$ be the open subset of $E$ given by removing $p$ and all points $q'$ differing from $p$ by torsion of order dividing $m$, except the point $q_0$ itself. We can now regard $(E \times B, s_1, s_2)$ as a family of twice-marked elliptic curves $\{(E,p,q)\}_{q \in B}$, with the property that exactly one member $(E,p,q_0)$ of this family has $p-q$ of torsion order $m$, and all others have $p-q$ either non-torsion or torsion of order not dividing $m$. We will show that $\widetilde{G}^r_d(E \times B / B;\ (s_1,T'),(s_2,d-T))$ is nonempty of dimension $\rho_1(r,d;\ T',d-T) + 1$, which will prove the result.

More specifically, we will show that if $p,q$ are points not differing by torsion of order dividing $m$ (which is the case for all but one member of the family), $\widetilde{G}^r_d(E;\ (p,T'),(q,d-T))$ is empty, while if $p,q$ differ by torsion of order exactly $m$ (the case for one member of the family), then $\widetilde{G}^r_d(E;\ (p,T'),(q,d-T))$ is nonempty of dimension $\rho_1(r,d;\ T',d-T)+1$.

Suppose that $(E,p,q)$ is a twice-marked elliptic curve, and $(L,V)$ is some linear series with vanishing orders exactly $T'$ at $p$ and $d-T$ at $q$.

The key observation is that for any $i \in \{0,1,\cdots,r\}$, the subspace $$V_i = V(-t'_{i}p - (d-t_i)q)$$ of $V$ consisting of sections vanishing to order at least $t'_{i}$ at $p$ and order at least $d-t_i$ at $q$ must be at least $1$-dimensional.

In particular, for $i = 0$ and $i=k$ it follows that the divisors $d\cdot q$ and $m \cdot p + (d-m) \cdot q$ are both in the divisor class defined by the line bundle $L$. Hence $L$ must be the line bundle $\cO_E(d \cdot q)$, and the points $p$ and $q$ must differ by an element of $\Pic^0(E)$ of order dividing $m$. This shows that $G^r_d(E;\ (p,T'),(q,d-T))$ is indeed empty whenever $p,q$ do not differ by torsion of order dividing $m$.

\emph{We will now assume that $p$ and $q$ differ by torsion of order exactly $m$.}

\emph{Claim 1.} For $i=0,1,\cdots,r-1$, there exist no sections $s \in V$ whose divisor of zeros contains $t'_{i+1} p + (d-t_i)q$. 

\emph{Proof of claim 1.} If $i = 0,k-1$, or $k$, then the divisor  $t'_{i+1} p + (d-t_i)q$ has degree greater than $d$, so it certainly cannot be contained in the divisor of zeros of $s$. Otherwise, $t'_{i+1} + (d-t_i) \geq d$, so the only way for the divisor of $s$ to contain such a divisor is if $t'_{i+1} = t_i$ and the divisor of $s$ is exactly $t_i p + (d-t_i) q$. But this implies that $p-q$ is $t_i$-torsion, which is impossible since $m \nmid t_i$ when $i \neq 0,k$.

\emph{Claim 2.} The space $V_i$ is exactly $1$-dimensional, and a nonzero section of $V_i$ vanishes to order exactly $t'_i$ at $p$ and $d-t_i$ at $q$.

\emph{Proof of claim 2.} The second statement follows from claim 1. The first part follows from the second: in a 2-dimensional space of sections, there must be $2$ distinct orders of vanising at any given point.

Therefore we see that $(L,V)$ has a very simple form: $L = \cO_E(d \cdot q)$ and $V$ is the span of $r+1$ disjoint $1$-dimensional subspaces $V_i$, each of which is spanned by a section of $L$ vanishing along the divisor $t'_i p + (d-t_i) q$. Conversely, it is clear that any choice of these $r+1$ spaces $V_i$ gives rise to a point of $\widetilde{G}^r_d(E;\ (p,T'),(q,d-T))$. From this description, we can calculate from Riemann-Roch:

\begin{eqnarray*}
\dim \widetilde{G}^r_d(E;\ (p,T'),(q,d-T)) &=& \sum_{i=0}^r \dim \PP H^0(L(-t'_i p - (d-t_i) q))\\
&=& \sum_{i=0}^r \dim \PP H^0( \cO_E(-t'_i p + t_i q))\\
&=& 2 + \sum_{i=0}^r (t_i - t'_i - 1).
\end{eqnarray*}
In the third line, we use the fact that for all $i \not\in \{0,k\}$, $t_i-t'_i > 0$, hence $h^1(E,\cO_E(-t'_ip+t_iq)) = 0$, while for $i=0$ and $i=k$, $h^1( \cO_E(-t'_i p + t_i q) ) = h^1( \cO_E ) = 1$.

On the other hand, a bit of algebra shows that $$\rho_1(r,d;\ T',d-T) = 1 + \sum_{i=0}^r (t_i -t'_i - 1).$$ So we have established that, in the case where $p-q$ differ by torsion of order $m$,
$$\dim G^r_d(E;\ (p,T'),(q,d-T)) = 1 + \rho_1(r,d;\ T', d-T).$$
By the remarks in the first paragraph, this proves that $\widetilde{\cG}^r_{1,d}(T',d-T)$ has dimensionally proper points.
\end{proof}

\begin{cor} \label{c_induction}
If $S$ is a secundive numerical semigroup, $k$ is a good slider for $S$, and $\cM^{\slide_k(S)}_{g-1,1}$ has effectively proper points, then $\msg$ has effectively proper points.
\end{cor}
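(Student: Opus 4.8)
The plan is to run the inductive step through the regeneration lemma (Lemma \ref{l_reg}): we degenerate a smooth marked curve of genus $g$ to a genus $(g-1)$ curve carrying the semigroup $S' = \slide_k(S)$, glued at its marked point to an elliptic curve carrying a second marked point differing by torsion of the appropriate order, and then smooth the result. Since $k$ is a good slider, $S'$ is a secundive numerical semigroup of genus $g-1$, and there is an effective subsequence $T$ for $S$ with $T' := \slide_k(T)$ an effective subsequence for $S'$; I would take for $T$ the concrete subsequence used in the proof of Lemma \ref{l_slidercriteria} (namely $0$ together with the least positive element of $S$ in each residue class modulo $k$), so that $T'$ is the analogous subsequence for $S'$ and $k$ is the unique nonzero multiple of $k$ occurring in $T$. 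Since $\slide_k$ is injective on $\NN$, we have $|T| = |T'| =: r+1$. Fix any integer $d \geq \max T$, noting $\max T' \leq \max T \leq d$.

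By Corollary \ref{c_epdp} applied to $S$, it suffices to produce dimensionally proper points of $\widetilde{\cG}^r_{g,d}(d-T)$. I would obtain these from Lemma \ref{l_reg} with $g_1 = g-1$, $g_2 = 1$, and vanishing sequences $A = d-T'$ and $A' = d-T$. The genus $g-1$ input $\widetilde{\cG}^r_{g-1,d}(A) = \widetilde{\cG}^r_{g-1,d}(d-T')$ has dimensionally proper points by Corollary \ref{c_epdp} applied to $S'$ with the effective subsequence $T'$, using the hypothesis that $\cM^{S'}_{g-1,1}$ has effectively proper points. The genus $1$ input is $\widetilde{\cG}^r_{1,d}(d-A, A') = \widetilde{\cG}^r_{1,d}(T', d-T)$, which is exactly what Lemma \ref{l_ellipticbridge} produces, once its hypotheses are checked for the pair $(T,T')$ with distinguished index the position $\kappa$ of $k$ in $T$ (so $t_\kappa = k$, and $1 \leq \kappa \leq r$ since $k > 0$). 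Granting both inputs, Lemma \ref{l_reg} yields dimensionally proper points of $\widetilde{\cG}^r_{g,d}(d-T)$, and a final application of Corollary \ref{c_epdp} converts this into the assertion that $\msg$ has effectively proper points.

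The step I expect to require the most care is the verification of the three hypotheses of Lemma \ref{l_ellipticbridge} for $(T,T')$. Hypothesis (1) is immediate, since $\slide_k$ fixes both $0$ and $k$, giving $t_0 = t'_0 = 0$ and $t_\kappa = t'_\kappa = k$. Hypothesis (2) holds because $k$ is the only nonzero multiple of $k$ in $T$ and because $\slide_k$ carries non-multiples of $k$ to non-multiples of $k$, so no $t_i$ or $t'_i$ with $i \notin \{0,\kappa\}$ is divisible by $t_\kappa = k$. Hypothesis (3), the interleaving $t_{i-1} \leq t'_i < t_i$ for $i \notin \{0,\kappa\}$, holds because such a $t_i$ is not a multiple of $k$ (forcing $t'_i = \slide_k(t_i) < t_i$) and because $T$ contains no two consecutive integers $c, c+1$ with $c$ a multiple of $k$ — the only configuration under which $\slide_k$ could reverse the order of two elements of $T$; here one uses that $k+1 \notin S$, a property shared by all the good sliders produced in Lemma \ref{l_slidercriteria}. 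Everything else is bookkeeping, the one slightly delicate identity (additivity of the Brill--Noether number) having already been absorbed into Lemma \ref{l_reg}.
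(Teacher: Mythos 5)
Your proposal is correct and follows essentially the same route as the paper's own proof: Corollary \ref{c_epdp} to translate effectively proper points into dimensionally proper points of $\widetilde{\cG}^r_{g-1,d}(d-T')$, Lemma \ref{l_ellipticbridge} for the elliptic piece $\widetilde{\cG}^r_{1,d}(T',d-T)$, the regeneration Lemma \ref{l_reg} with $A = d-T'$, $A' = d-T$, and Corollary \ref{c_epdp} again to conclude. The only real difference is that the paper takes an effective subsequence $T$ furnished directly by condition (c) of the good-slider definition and prunes the multiples of $k$ other than $0$ and $k$, whereas you use the concrete $T$ (and the property $k+1 \not\in S$) coming from Lemma \ref{l_slidercriteria}; this is harmless for the paper's purposes, since those are exactly the good sliders used in the proof of Theorem \ref{t_ewexist}, and your verification of the hypotheses of Lemma \ref{l_ellipticbridge} is in fact more explicit than the paper's.
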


\begin{proof}
Let $T$ be an effective subsequence of $S$ such that $T' = \slide_k(T)$ is an effective subsequence of $S' = \slide_k(S)$. Removing some elements if necessary, we may assume that $T$ and $T'$ contain no multiples of $k$ other than $0$ and $k$. Let $r = |T|-1$ and let $d = \max T$. By Proposition \ref{p_epdp}, $\widetilde{\cG}^r_{g-1,d}(d-T')$ has dimensionally proper points. By Lemma \ref{l_ellipticbridge}, $\widetilde{\cG}^r_{1,d}(T',d-T)$ also has dimensionally proper points. The regeneration lemma, Lemma \ref{l_reg}, implies that $\widetilde{\cG}^r_{g,d}(d-T)$ also has dimensionally proper points; Proposition \ref{p_epdp} now implies that $\msg$ has an effectively proper component.
\end{proof}

\begin{rem}
In fact, the proof of Corollary \ref{c_induction} shows that the existence of effectively proper points of $\cM_{g,1}^S$ can be deduced from the existence of effectively proper points of $\cM_{g-1,1}^{S'}$ whenever $S',S$ are semigroups of genus $g-1$ and $g$ respectively possessing effective subsequences $T',T$ that satisfy the hypotheses of Lemma \ref{l_ellipticbridge}. It is possible that more $\cM_{g,1}^S$ can be shown to have effective proper points by constructing $S'$ in a different way from the slide construction.
\end{rem}

\begin{proof}[Proof of Theorem \ref{t_ewexist}]
Let $S$ be a numerical semigroup of genus $g$, such that $\ew(S) \leq g-2$. We will prove that $\msg$ has effectively proper components by induction on $g$. For $g=1$ the only semigroup is $H_1$, and there is nothing to prove.

Suppose that $g \geq 2$ and the result holds for genus $g-1$. If $\ew(S) = 0$, then $S = H_g$ and $\msg$ is a dense open subset of $\cM_{g,1}$, so the result follows. Otherwise, Corollaries \ref{c_slidersexist} and \ref{c_induction} show that the existence of an effectively proper point of $\msg$ follows from the existence of an effectively proper point of $\cM^{S'}_{g-1,1}$ for some semigroup $S'$ of genus $g-1$ and effective weight $\ew(S)-1 \leq g-3$. This completes the induction.

Now suppose that $\chr k = 0$ and $S$ is a numerical semigroup of genus $g$ such that $\ew(S) = g-1$. The argument above works without modification, except in one case: $g$ is odd and $S = \{ 0, \frac12g + \frac32, \frac12g+\frac52 \} \cup H_{g+3}$ (this is the exception in Corollary \ref{c_slidersexist}). The main theorem of \cite{kom91} is that for this specific semigroup, in characteristic $0$, $\msg$ has dimensionally proper points (which are the same as effectively proper points, since $S$ is primitive). With this possibility accounted for, the induction is complete in the case $\ew(S) = g-1$ as well.
\end{proof}


\bibliography{main}{}
\bibliographystyle{alpha}

\end{document}